\setlist{nolistsep}
\newcommand{\PG}{\textup{PG}}
\newcommand{\Q}{\mathcal{Q}}
\newcommand{\B}{\mathcal{B}}
\newcommand{\T}{\mathcal{T}}
\renewcommand{\H}{\mathcal{H}}
\renewcommand{\P}{\mathcal{P}}
\renewcommand{\L}{\mathcal{L}}
\newcommand{\K}{\mathcal{K}}
\newcommand{\C}{\mathcal{C}}
\renewcommand{\S}{\mathcal{S}}
\newtheorem{theorem}{Theorem}[section]
\newtheorem{lemma}[theorem]{Lemma}
\newtheorem{corollary}[theorem]{Corollary}
\newtheorem{remark}[theorem]{Remark}
\newtheorem{proposition}[theorem]{Proposition}
\newtheorem{example}{Example}
\newenvironment{proof}{\noindent{\bfseries Proof}\hspace{0.5em}}{ \null \hfill $\square$ \par}
\begin{document}
\title{Quasi-polar spaces}
\author{Jeroen Schillewaert\thanks{JS is supported by a University of Auckland Faculty Development Research Fund} \and Geertrui Van de Voorde\thanks{This author is supported by the Marsden Fund Council administered by the Royal Society of New Zealand.}
\date{}
}
\maketitle
\begin{abstract}
Quasi-polar spaces are sets of points having the same intersection numbers with respect to hyperplanes as classical polar spaces. Non-classical examples of quasi-quadrics have been constructed using a technique called {\em pivoting} \cite{QQ}. We introduce a more general notion of pivoting, called switching, and also extend this notion to Hermitian polar spaces. 

The main result of this paper studies the switching technique in detail by showing that, for $q\geq 4$, if we modify the points of a hyperplane of a polar space to create a quasi-polar space, the only thing that can be done is pivoting. The cases $q=2$ and $q=3$ play a special role for parabolic quadrics and are investigated in detail. Furthermore, we give a construction for quasi-polar spaces obtained from pivoting multiple times.

Finally, we focus on the case of parabolic quadrics in even characteristic and determine under which hypotheses the existence of a nucleus (which was included in the definition given in \cite{QQ}) is guaranteed. 
\end{abstract}

AMS code: 51E20\\
Keywords: projective geometry, quadrics, hyperplanes, quasi-quadrics, intersection numbers
\section{Introduction}

The characterisation of polar spaces by their intersection properties can be traced back to the 1950’s, with seminal work done by Segre and Tallini and their schools. In general, polar spaces are not characterised by their intersection numbers with hyperplanes. In \cite{QQ}, the authors constructed sets which were not quadrics but shared the intersection numbers with quadrics. We refer to \cite{QD} for a survey of these classical results.

We define a {\em quasi-polar space} to be a set of points in $\PG(m,q)$, $m\geq 2$, with the same intersection sizes with hyperplanes as a non-degenerate classical polar space embedded in $\PG(m,q)$.

Consequently, in $\PG(2n+1,q)$ we will distinguish between {\em elliptic} and {\em hyperbolic} quasi-quadrics, while in $\PG(2n,q)$ the only quasi-quadrics are {\em parabolic}. Quasi-Hermitian varieties exist in odd and even dimension. We also say that the quasi-polar space is of elliptic, hyperbolic, parabolic or Hermitian {\em type}. Throughout this paper, all polar spaces will be assumed to be non-symplectic.

We will call quadrics of types $\Q^+(2n+1,q)$ and $\Q^-(2n+1,q)$ of {\em opposite type}. When talking about hyperplanes meeting in an elliptic (or hyperbolic) quadric, we say that the {\em hyperplane} is {\em of elliptic (or hyperbolic) type}. A hyperplane meeting a quasi-quadric in the number of points of a singular quadric is a {\em singular hyperplane}; the singular hyperplanes of a quadric are then precisely those meeting in a cone with vertex a point and base a quadric of the same type. We denote a cone with vertex $P$ and base $\Q$ as $P\Q$.

In this paper, we introduce the notion of {\em switching}, which takes as input a quasi-polar space $\P$ in $\PG(m,q)$. One fixes a hyperplane $\pi$ of $\PG(m,q)$ in which a set $R$ of points is replaced with a set $R'$ forming the new set $\P' = (\P \setminus R) \cup R'$. We say that $\P'$ is obtained by {\em switching $\P$ in the hyperplane $\pi$}. {\em Pivoting} is a particular type of switching, and was introduced in \cite{QQ}: we {\em pivot} a polar space $\P$ if we switch in a singular hyperplane $\pi$ of $\P$, and replace the cone $\pi\cap \P=P\Q$ by $P\Q'$ where $\Q'$ is a quasi-polar space of the same type as $\Q$.

{\bf Outline and main results of the paper:}  We refer to the statements in the paper for a precise version of the statements in this overview, since many have exceptions for small $q$ and are technical to state. In Section \ref{sec:cardinality} we show that in all cases except for the parabolic quasi-quadric, the number of points of a quasi-polar space easily follows from the definition.
In Section \ref{sec:switching} we discuss switching, and we determine under which conditions a set $\P'$ obtained by switching a quasi-polar space is a quasi-polar space. In Subsection \ref{sec:type-preserving} we show that
switching preserves the type of the quasi-polar space (Lemma \ref{lem:typepreserved}). In Subsection \ref{sec:impossible} we show that switching in non-singular hyperplanes is not possible 
(Lemma \ref{lem:switching-nonsingular}). In Section \ref{sec:switching-singular} we investigate switching in singular hyperplanes.
Our main result appears in Subsection \ref{sec:switching-pivoting}. It states that if $\mathcal{P}$ is not a parabolic quadric with $q$ even, then switching in a singular hyperplane with vertex $P$ is pivoting (Theorem \ref{prop:switching-is-pivoting}). 
In Section \ref{sec:repeated} we provide a repeated pivoting construction (Proposition \ref{repeatedpivoting}). In Section \ref{PQQ} we investigate parabolic quasi-quadrics when $q$ is even. In Subsection \ref{sec:FDC} we study the original definition given in \cite{QQ} where a parabolic quasi-quadric is required to have a nucleus. We study the latter's properties in Subsection \ref{sec:nucleus-properties}. In Subsection \ref{sec:pivoting-PQQ} we show that we can pivot in a parabolic quadric and more generally obtain a quasi-quadric without nucleus (Proposition \ref{nonucleus}). We also determine what happens if we require the quasi-parabolic quadric to retain a nucleus (Corollary \ref{cor:parabolic}).
We conclude the paper in Subsection \ref{sec:nucleus-sufficient} by providing a sufficient condition for a parabolic quasi-quadric to have a nucleus (Lemma \ref{thereisnucleus}), along with a set of seemingly weaker conditions which prove to be equivalent to those of a parabolic quasi-quadric with nucleus (Proposition \ref{equivalentdef}).
Finally, some of the proofs of Propositions in this paper which are similar to others have been collected in Appendix A, and results for $\Q(2n,2)$ (Proposition \ref{Q2n2}) and $\Q(2n,3)$ (Proposition \ref{Q2n3}) appear in Appendix B.

\section{Cardinality of quasi-polar spaces} \label{sec:cardinality}

We first show that in all cases except for parabolic quasi-quadrics and two exceptional low-dimensional cases, the number of points of a quasi-polar space easily follows from the definition.
Throughout this paper, we will use the following convention to treat the elliptic an hyperbolic quadric simultaneously: we use the $\pm$ symbol where $\pm$ reads as $+$ when we are in the hyperbolic case and $-$ in the elliptic case (and vice versa for $\mp$). All statements and their proofs should be read by choosing either the top or bottom row of the symbols $\pm$ and $\mp$ consistently (and as such every statement containing the symbol $\pm$ proves two different statements, one for the hyperbolic case, and one for the elliptic case.)

\begin{lemma}\label{lem:correct-points}\begin{itemize}
\item[(i)] Let $\S$ be a set of points in $\PG(2n+1,q)$, $n\geq 1$, such that every hyperplane meets in $|\Q(2n,q)|$ or $|P\Q^\pm(2n-1,q)|$ points, then $|\S|=|\Q^\pm(2n+1,q)|$ unless $n=1$ and $\S$ is the set of $q+1$ points on a line in $\PG(3,q)$ (thus meeting every plane in $|\Q(2,q)|=q+1$ or $|P\Q^-(1,q)|=1$ points).

\item[(ii)] Let $\T$ be a set of points in $\PG(m,q^2)$, $m\geq 2$, such that every hyperplane meets $\T$ in $|\H(m-1,q^2)|$ or $|P\H(m-2,q^2)|$ points, then $|\T|=|\H(m,q^2)|$ unless $m=2$ and $\T$ is the set of $q^2+q+1$ points of a Baer subplane (thus meeting every line in $|\H(1,q^2)|=q+1$ or $|P\H(0,q^2)|=1$ points).

\end{itemize}
Moreover, in all of the above cases, except for $n=1$ in (i) and $m=2$ in (ii), the number of hyperplanes of a fixed type is a constant.
\end{lemma}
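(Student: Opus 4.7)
The plan is to run a standard double-counting/linear-algebra argument on the incidence structure between $\S$ (respectively $\T$) and the hyperplanes of the ambient projective space.

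For (i), write $\h_1 := |\Q(2n,q)|$ and $\h_2 := |P\Q^\pm(2n-1,q)|$, and let $a$, respectively $b$, be the number of hyperplanes of $\PG(2n+1,q)$ meeting $\S$ in $\h_1$, respectively $\h_2$, points. Using that $\PG(2n+1,q)$ has $\theta_{2n+1}$ hyperplanes in total, $\theta_{2n}$ through a fixed point, and $\theta_{2n-1}$ through a fixed pair of points (where $\theta_k = (q^{k+1}-1)/(q-1)$), I would first write down the three linear-quadratic equations
$$a+b=\theta_{2n+1},\qquad \h_1 a + \h_2 b = |\S|\,\theta_{2n},\qquad \h_1(\h_1-1)\,a + \h_2(\h_2-1)\,b = |\S|(|\S|-1)\,\theta_{2n-1},$$
obtained by counting, respectively, hyperplanes, incident pairs (point of $\S$, hyperplane through it), and incident triples (ordered pair of distinct points of $\S$, hyperplane through both).

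From the first two equations $a$ and $b$ are affine-linear in $|\S|$, and substituting into the third gives a quadratic equation in $|\S|$. Since $\Q^\pm(2n+1,q)$ itself satisfies the hypothesis, $N_1:=|\Q^\pm(2n+1,q)|$ is one root, so by Vieta the other root $N_2$ is determined in closed form from $\h_1,\h_2$ and the $\theta_k$. The main step I expect to be the obstacle is showing that for $n\geq 2$ the value $N_2$ forces $a$ or $b$ to be negative or non-integer (so no such $\S$ exists), while for $n=1$ in the $-$ case one computes $N_2 = q+1$, which is realised by any line of $\PG(3,q)$ (and the $+$ case has no spurious realisation since $|P\Q^+(1,q)| = 2q+1 \neq 1$). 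This verification is a routine but finicky manipulation using the explicit formulas for $|\Q^\pm(2n+1,q)|$ and $|P\Q^\pm(2n-1,q)|$; if it turns unwieldy one can handle the base case $n=2$ by hand and use induction, or bound $N_2$ via a Fisher-type inequality.

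For (ii) the same scheme applies verbatim in $\PG(m,q^2)$, now with $\h_1 = |\H(m-1,q^2)|$, $\h_2 = |P\H(m-2,q^2)|$ and $\theta_k$ computed in base $q^2$. One root of the resulting quadratic is $|\H(m,q^2)|$, and a direct check shows that the second root equals $q^2+q+1$ precisely when $m=2$ (realised by any Baer subplane) and is inadmissible for $m\geq 3$.

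Finally, for the \emph{moreover} clause: outside the listed exceptions $|\S|$ (resp.\ $|\T|$) is pinned down uniquely, and then the first two equations of the linear system solve for $a$ and $b$ as constants depending only on $q,n$ (resp.\ $q,m$), giving the asserted constancy of the number of hyperplanes of each fixed type.
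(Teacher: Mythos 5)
Your proposal follows essentially the same route as the paper: the three standard counting equations, elimination of $a,b$ to get a quadratic in $|\S|$ (resp.\ $|\T|$), Vieta's formula using the classical polar space as one known root, and a divisibility analysis of the second root, with the ``moreover'' clause recovered from the first two equations once the size is fixed. The only parts you defer are exactly what the paper carries out explicitly — the divisibility check reduces to whether $q^n\pm 1$ divides $q^{2n+1}-1$ (resp.\ $q^{m-1}\mp 1$ divides $(q-1)^2$), and the identification of the exceptional sets as a line, resp.\ a Baer subplane, needs a Bose--Burton-type characterisation rather than just the observation that these examples realise the spurious root.
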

\begin{proof}\begin{itemize}\item[(i)] Let $\alpha$ be the number of hyperplanes meeting $\S$ in $u=|\Q(2n,q)|$ points and $\beta$ be the number of hyperplanes meeting $\S$ in $v=|P\Q^\pm(2n-1,q)|$ points. Standard counting yields that
\begin{align}
\alpha+\beta&=\frac{q^{2n+2}-1}{q-1}\\ 
\alpha u+\beta v&=|\S|\frac{q^{2n+1}-1}{q-1}\\ 
\alpha u (u-1)+\beta v (v-1)&=|\S|(|\S|-1)\frac{q^{2n}-1}{q-1}.
\end{align}

Using the first two equations to write $\alpha$ and $\beta$ in function of $|\S|$, we see that the third yields a quadratic equation in $|\S|$ whose sum of roots is given by 
\begin{align*}
\Sigma=1+\frac{(q^{2n+1}-1)(u+v-1)}{q^{2n}-1}.
\end{align*} 
One of those roots is $|\S|=|\Q^\pm(2n+1,q)|$, which is an integer so the other root is an integer if and only $\Sigma$ is an integer.
Now $u+v-1=2\frac{q^{2n}-1}{q-1}\pm q^n-1$. This implies that $\Sigma$ is an integer if and only if $q^{2n}-1$  divides
$$(q^{2n+1}-1)(2\frac{q^{2n}-1}{q-1}\pm q^n-1),$$
so if and only if $q^{2n}-1|(q^{2n+1}-1)(\pm q^n-1)$. This is equivalent to $(q^{2n}-1)|(q^n\mp1)(q^{2n+1}-1)$, and hence, to the condition that $q^n\pm 1$ divides $q^{2n+1}-1$.

We see that in the hyperbolic case, this never happens, while in the elliptic case, $\Sigma$ is an integer if and only if $n=1$. In that case, $q+1$ is the root for $\S$, different from $|\Q^-(3,q)|$. Since $\S$ is a set of $q+1$ points in $\PG(3,q)$ such that every plane intersects it in $1$ or $q+1$ points, it is easy to see (see also \cite{BB}) that in this case, the $q+1$ points of $\S$ form a line.

\item[(ii)] We proceed in the same way as above. Let $\alpha$ be the number of hyperplanes meeting $\T$ in $r=|\H(m-1,q^2)|$ points and $\beta$ be the number of hyperplanes meeting $\T$ in $s=|P\H(m-2,q^2)|$ points. 

Standard counting now yields that
\begin{align}
\alpha+\beta&=\frac{q^{2m+2}-1}{q^2-1}\\
\alpha r+\beta s&=|\T|\frac{q^{2m}-1}{q^2-1}\\
\alpha r(r-1)+\beta s(s-1)&=|\T|(|\T|-1)\frac{q^{2m-2}-1}{q^2-1}.
\end{align}

Using the first two equations to write $\alpha$ and $\beta$ in function of $|\T|$, we see that the third yields a quadratic equation in $|\T|$ whose sum of roots is given by 
\begin{align*}
\Sigma=1+\frac{(q^{2m}-1)(r+s-1)}{q^{2m-2}-1}.
\end{align*} 
One of those roots is $|\T|=|\H(m,q^2)|$, which is an integer so the other root is an integer if and only $\Sigma$ is an integer. 
Using $\pm$ and $\mp$ where the top row represents the case that $m$ is even and the bottom row represents the case that $m$ is odd, we find that
 $r+s-1=\frac{q^{m-1}\pm1}{q^2-1}(2q^m\mp(q^2+1))$. This implies that $\Sigma$ is an integer if and only if $q^{2m-2}-1$  divides
$$\frac{(q^{2m}-1)(q^{m-1}\pm1)(2q^m\mp(q^2+1))}{q^2-1}$$
so if and only if $q^{m-1}\mp1|\frac{q^{2m}-1}{q^2-1}(2q^m\mp(q^2+1))$. This is equivalent to $(q^{m-1}\mp1)|\frac{(q^{m}-1)(q^m+1)(q-1)^2}{q^2-1}$, and hence simplifies further to the condition that $q^{m-1}\mp1$ divides $(q^m\pm1)(q-1)$, and hence, $q^{m-1}\mp1 | (q-1)^2$.
It follows that there are no solutions if $m$ is odd, and for $m$ is even, we only find a solution when $m=2$. In the latter case, we find that $q^2+q+1$ is the second root for $|\T|$, different from $|\H(2,q^2)|$, and it is easy to see that in this case, $\T$ is the set of points of a Baer subplane.

\end{itemize}

Finally, we see that the values of $\alpha,\beta$ are uniquely determined when $|\S|$, resp. $|\T|$ is fixed.
\end{proof}

\section{Switching in quasi-polar spaces}\label{sec:switching}

We investigate switching in more detail. First we show that switching is type-preserving (Subsection \ref{sec:type-preserving}), and then we show that switching is impossible in non-singular hyperplanes (Subsection \ref{sec:impossible}).

\subsection{Switching is type-preserving}\label{sec:type-preserving}

The following lemma shows that, if $q\notin\{ 2,4\}$, switching preserves the type of a quasi-polar space. 

\begin{lemma}\label{lem:typepreserved} Let $\P$ be a quasi-polar space. Suppose that $\P'$ is a quasi-polar space obtained by switching in the hyperplane $\pi$. Then $\P'$ is a quasi-polar of the same type as $\P$, unless $\{\P,\P'\}$ is the set of
\begin{itemize}
\item a parabolic quasi-quadric in $\PG(2,4)$ and a Baer subplane $\PG(2,2)$ in $\PG(2,4)$, or
\item an elliptic and hyperbolic quasi-quadric in $\PG(2n+1,2)$.
\end{itemize}
\end{lemma}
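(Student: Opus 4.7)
The strategy is to suppose $\P$ has type $X$ and $\P'$ has type $Y$ with $X\neq Y$, and derive a contradiction unless we are in one of the listed exceptional cases. Set $R=\P\setminus\P'\subseteq\pi$ and $R'=\P'\setminus\P\subseteq\pi$. Since $\P\setminus\pi=\P'\setminus\pi$, we obtain the two identities
\[
|\P|-|\P'|=|R|-|R'|,\qquad |\pi'\cap\P|-|\pi'\cap\P'|=|\mu\cap R|-|\mu\cap R'|=:c_\mu,
\]
where $\pi'\neq\pi$ is a hyperplane and $\mu=\pi\cap\pi'$ has codimension two. The key structural point is that $c_\mu$ depends only on $\mu$, so all hyperplanes of $\PG(m,q)$ through a fixed $\mu$ induce the same difference of intersection sizes.

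I would then enumerate the pairs of distinct types that can coexist in the same $\PG(m,q)$: hyperbolic versus elliptic in $\PG(2n+1,q)$; parabolic versus Hermitian in $\PG(2n,q^2)$; quadric versus Hermitian in $\PG(2n+1,q^2)$. Writing $S_X=\{u_X,v_X\}$, $S_Y=\{u_Y,v_Y\}$ for the sets of allowed hyperplane intersection sizes (with minor book-keeping in the parabolic case where $|S|=3$), suppose some $\mu$ is contained in hyperplanes of both $\P$-types. Then both $u_X-c_\mu$ and $v_X-c_\mu$ must lie in $S_Y$, which forces the gap $|u_X-v_X|$ to equal $|u_Y-v_Y|$. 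For every Hermitian/quadric pair in dimension at least $3$, these gaps differ; combined with the fixed hyperplane-type counts from Lemma~\ref{lem:correct-points} and a pigeonhole on the $q+1$ hyperplanes through a generic $\mu$, a mixed $\mu$ is forced to exist and we get a contradiction. The only place the pigeonhole fails is $\PG(2,4)$, which is precisely the exception in Lemma~\ref{lem:correct-points}(ii); a short direct inspection there, working with $|R|=1$ and $|R'|=3$, produces exactly the conic/Baer-subplane exception.

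The remaining case is hyperbolic/elliptic in $\PG(2n+1,q)$, where both gaps equal $q^n$ and the gap argument yields nothing. Here the cardinality identity gives $|R|-|R'|=\pm 2q^n$, and combined with $|R|\le|\pi\cap\P|\in S_X$, $|R'|\le|\pi\cap\P'|\in S_Y$ this forces $\pi$ to be a tangent hyperplane to both $\P$ and $\P'$. To conclude I would evaluate $\sum_\mu c_\mu$ (the sum being over all hyperplanes $\mu$ of $\pi$) in two ways: once as $(|R|-|R'|)\theta_{m-2}$ with $\theta_{m-2}=(q^{m-1}-1)/(q-1)$, and once by grouping the $\pi'$ through each $\mu$ by their intersection type with $\P$ and $\P'$ and invoking the numbers of hyperplanes of each type supplied by Lemma~\ref{lem:correct-points}. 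For $q\ge 3$ the two evaluations are incompatible, so no such switching can exist; for $q=2$ they are consistent, yielding exactly the second listed exception in $\PG(2n+1,2)$.

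The main obstacle I expect is this last counting step: the linear hyperplane-count alone is not sharp enough to exclude $q=3$, so one needs to bring in the quadratic identity from the proof of Lemma~\ref{lem:correct-points} (coming from double-counting incidences of pairs of points with hyperplanes) to get enough leverage. Keeping track of the separate contributions from hyperplanes that are tangent and non-tangent with respect to each of $\P$ and $\P'$, while consistent with $\pi$ being tangent to both, is where the calculation is most delicate.
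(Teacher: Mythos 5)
Your overall strategy (compare the admissible ``gaps'' $|u_X-v_X|$ and $|u_Y-v_Y|$ via a codimension-two space of $\pi$ lying on hyperplanes of both $\P$-types, then finish the equal-gap case by a moment count) is genuinely different from the paper's, but as written it has two load-bearing holes. First, the existence of a \emph{mixed} codimension-two space $\mu\subset\pi$ is never established. Since $\P$ is only a quasi-polar space you cannot invoke the classical incidence structure of Lemma \ref{lem:classical2}, and the pigeonhole you gesture at reduces to a divisibility statement: if no $\mu\subset\pi$ is mixed, then the hyperplanes $\neq\pi$ of each $\P$-type come in full pencil-classes of size $q$, so each type-count minus at most one (for $\pi$ itself) is divisible by $q$. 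That is \emph{not} contradictory for the Hermitian counts --- for $\H(2n+1,q^2)$ the number of singular hyperplanes is $\equiv 1$ and the number of non-singular ones is $\equiv 0 \pmod{q^2}$ --- so no mixed $\mu$ is forced. The paper sidesteps this entirely: in the quadric-versus-Hermitian cases it compares cardinalities ($|\H(2n+1,q^2)|-|\Q^\pm(2n+1,q^2)|$ exceeds every possible difference $C_i-D_j$ of hyperplane sections, so the two sets cannot agree outside a hyperplane), and in $\PG(2n,q^2)$ it exhibits a hyperplane through a sparse $\mu$ carrying too many points of $\P'$.

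Second, in the elliptic-versus-hyperbolic case your double count of $\sum_\mu c_\mu$ is vacuous: summing $|\pi'\cap\P|-|\pi'\cap\P'|$ over all $\pi'\neq\pi$ equals $(|\P|-|\P'|)\theta_{m-1}-(|R|-|R'|)=q(|R|-|R'|)\theta_{m-2}$ identically, so your ``two evaluations'' coincide for every $q$ and exclude nothing. You correctly sense that a second-moment identity is needed, but that is exactly where the proof lives and you have not set it up. The paper does it concretely: it shows every hyperplane of $\pi$ meets the difference set $\S$ of size $2q^n$ in $0$, $q^n$ or $2q^n$ points, writes the three standard moment equations inside $\pi$, and solves to get $\gamma<0$ for $q>2$ (while for $q=2$ it identifies $\S$ as an affine $(n+1)$-space, which yields the listed exception). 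Two further inaccuracies: your claim that $\pi$ is forced to be a singular hyperplane of both $\P$ and $\P'$ fails for $n\geq 2$, since $|R|-|R'|=2q^n$ is far smaller than $|\Q(2n,q)|$; and the degenerate outputs of Lemma \ref{lem:correct-points} (the $(q+1)$-point line in $\PG(3,q)$ as well as the Baer subplane) must be excluded separately, as the paper does.
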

\begin{proof} 
Suppose first that $\P$ is a quasi-Hermitian variety in $\PG(2n+1,q^2)$, $n\geq 1$. Then $\P$ meets $\pi$ in either $C_1=|\H(2n,q^2)|$ or $C_2=|P\H(2n-1,q^2)|$ points. Note that $C_2>C_1$ for all $q$.
Now suppose that $\P'$ is a quasi-polar space of a different type, then necessarily, $\P'$ is an elliptic or hyperbolic quasi-quadric. Hence, $|\P'\cap\pi|=D_1=|P\Q^\pm(2n-1,q^2)|$, or $|\P'\cap\pi|=D_2=|Q(2n,q^2)|$. 
Suppose first that $\P'$ is not the set of $q^2+1$ points on a line.
Then we know from Lemma \ref{lem:correct-points} that $|\P'|=|\Q^\pm(2n+1,q^2)|=\frac{(q^{2n+2}\mp 1)(q^{2n} \pm 1)}{q^2-1}$ and $|\P|=|\H(2n+1,q^2)|=(q^{2n+1}+1)\frac{q^{2n+2}-1}{q^2-1}$. Thus $|\H(2n+1,q^2)|-|\Q^\pm(2n+1,q^2)| =\frac{q^{2n}(q^{2n+2}-1)}{q+1}+q^{2n}\mp q^{2n}$.

Since  $\frac{q^{2n}(q^{2n+2}-1)}{q+1}+q^{2n}\mp q^{2n}>C_2>C_1$ for all $n\geq 1,q\geq 2$, we see that for all $i,j$, $|\H(2n+1,q^2)|\neq |\Q^\pm(2n+1,q^2)|+C_i-D_j$. Recall that $\P$ and $\P'$ coincide outside $\pi$. Since the number of points of $\P$, not in $\pi$ is $|\H(2n+1,q^2)|-C_i$ for some $i$, and the number of points of $\P'$, not in $\pi$, is $|\Q^\pm(2n+1,q^2)|-D_j$ for some $j$, this is a contradiction.

If $n=1$ and $|\P'|=q^2+1$, we find that $C_2=q^3+q^2+1$, $D_1=1$ and $D_2=q^2+1$. As above, we see that $|\H(3,q^2)|-(q^2+1)>C_2$, so $|\H(3,q^2)|-(q^2+1)\neq C_i-D_j$ for all $i,j$, a contradiction. By reversing the roles of $\P$ and $\P'$, we deduce that it is impossible for a switched quasi-quadric in $\PG(2n+1,q)$ to be a quasi-Hermitian variety.

Suppose that $\P$ is a quasi-quadric embedded in $\PG(2n+1,q)$ such that $\P'$ is a quasi-quadric of a different type. W.l.o.g. suppose that $\P$ is a hyperbolic quasi-quadric and $\P'$ is an elliptic quasi-quadric. By Lemma \ref{lem:correct-points}, if $|\P'|\neq |\Q^-(2n+1,q)|$, then $\P'$ is the set of $q+1$ points on a line. It is impossible for $\P'$ to be the set of $q+1$ points on a line of $\PG(3,q)$ since the complement of a hyperplane meets $\Q^+(3,q)$ in at least $q^2$ points; whereas it has $q$ or $0$ points in common with the $q+1$ points of a line.
This implies that $|\P'|=|\Q^-(2n+1,q^2)|$ and hence, that the hyperplane $\pi$ needs to contain $|\Q^+(2n+1,q)|-|\Q^-(2n+1,q)|=2q^{n}$ points of $\P\setminus(\P\cap\P')$; let   $\S=\P\setminus(\P\cap\P')$, then $\S$ is a set of $2q^n$ points contained in $\pi$. 

Consider a hyperplane $\mu$, different from (possibly) $\pi$, meeting $\P$ in $|P\Q^+(2n-1,q)|$ points. Since this hyperplane contains $|\Q(2n,q)|$ or $|P\Q^-(2n-1,q)|$ points of $\P'$, it has to intersect $\S$ in  $|P\Q^+(2n-1,q)|-|\Q(2n,q)|=q^n$ or $|P\Q^+(2n-1,q)|-|P\Q^-(2n-1,q)|=2q^n$  points, which are contained in the codimension $2$-space $\pi\cap \mu$. 

Moreover, a hyperplane that has $|\Q(2n,q)|$ points of $\P$ either contains $|\Q(2n,q)|$ or $|P\Q^-(2n-1,q)|$ points of $\P'$. This implies that every hyperplane of $\pi$ meets $\S$ in $0,q^n$ or $2q^{n}$ points. Standard counting yields that
\begin{align}
\alpha+\beta+\gamma&=\frac{q^{2n+1}-1}{q-1}\\ 
\beta q^n+\gamma 2q^n&=2q^n\frac{q^{2n}-1}{q-1}\\ 
\beta q^n (q^n-1)+\gamma 2q^n (2q^n-1)&=2q^n(2q^n-1)\frac{q^{2n-1}-1}{q-1},
\end{align}
where $\alpha, \beta,\gamma$ denotes the number of hyperplanes of $\pi$ meeting $\S$ in $0,q^n,2q^n$ points respectively. Subtracting the second from the last equation and simplifying gives that 
$$\gamma=\frac{-q^{3n}+q^{2n}+2q^{3n-1}-q^{2n-1}-q^n}{q^n(q-1)},$$ so $\gamma<0$ if $q> 2$, a contradiction. 

If $q=2$, we find that $(\alpha,\beta,\gamma)=(2^{n-1},2^{2n+1}-2^n,2^{n-1}-1)$ and $\S$ is a set of $2^{n+1}$ points in $\PG(2n,2)$ meeting every hyperplane in $0,2^n$ or $2^{n+1}$ points. We will show that $\S$ is the set of points of an $(n+1)$-dimensional affine subspace. Since $\gamma$ equals the number of hyperplanes containing an $(n+1)$-dimensional space in $\PG(2n,2)$ and since $|\S| = 2^{n+1}$ the intersection of hyperplanes containing $\S$ is an $(n+1)$-dimensional space 
$\Pi_{n+1}$. Since $\alpha>0$, there is a hyperplane $H$ with $0$ points of $\S$. This hyperplane necessarily meets $\Pi_{n+1}$ in a hyperplane $\mu$ of $\Pi_{n+1}$, so all points of $\S$ are contained in the affine subspace $\Pi_{n+1}\setminus \mu$. Since $\S$ has $2^{n+1}$ points, $\S$ equals this affine subspace.

Consider an $(n-1)$-space contained in a generator of $\Q^+(2n+1,2)$, then we see that the symmetric difference of the two $n$-spaces thus found forms an affine $(n+1)$-space. It follows that it is possible to switch $\Q^+(2n+1,q)$ to obtain a quasi-elliptic quadric.

Finally, suppose that $\P$ is a quasi-Hermitian variety in $\PG(2n,q^2)$. Let $\pi$ be a hyperplane of $\PG(2n,q^2)$. Suppose that $\P'$ is a parabolic quasi-quadric obtained from switching $\P$ in $\pi$.
First assume that $n>1$. Note that $\pi$ meets $\P$ in at most $|\H(2n-1,q^2)|$ points since $|\H(2n-1,q^2)|>|P\H(2n-2,q^2)|$.
 Since there are $\frac{q^{4n}-1}{q^2-1}$ hyperplanes of $\pi$ and $\frac{q^{4n-2}-1}{q^2-1}$ hyperplanes of $\pi$ through a point of $\pi$, this implies that there exists a hyperplane $\mu$ of $\pi$ with at most $k:=|\H(2n-1,q^2)|\frac{q^{4n-2}-1}{q^{4n}-1}$ points of $\pi$.
A hyperplane $\rho$ of $\PG(2n,q^2)$ through $\mu$ has at least $|P\H(2n-2,q^2)|$ points of $\P$, so it has at least $|P\H(2n-2,q^2)|-k$ points of $\P$ outside of $\pi$. This implies that $|\rho\cap \P'|\geq |P\H(2n-2,q^2)|-k$.

Now $\rho\cap\P'$ is either $|\Q^+(2n-1,q^2)|,|\Q^-(2n-1,q^2)|$ or $|P\Q(2n-2,q^2)|$, and hence, $\rho\cap \P'$ has at most $|\Q^+(2n-1,q^2)|$ points. Since $|P\H(2n-2,q^2)|-k>|\Q^+(2n-1,q^2)|$ for $n>1$ and $q\geq 2$, we obtain a contradiction.

Now let $n=1$. From Lemma \ref{lem:correct-points}, we know that a quasi-Hermitian variety in $\PG(2,q^2)$ has either $q^3+1$ points or is a Baer subplane $\PG(2,q^2)$.

First assume that $\P$ has $q^3+1$ points. Then there are at least $q^3-q$ points of $\P$ outside $\pi$. Let $R$ be a point of $\pi$, not in $\P$, then $R$ lies on $q^2$ lines, different from $\pi$. Since each of these lines has either $1$ or $q+1$ points of $\P$, $R$ lies on at least one line with $q+1$ points of $\P$. Since these $q+1$ points belong to $\P'$, and $q+1>2$, we find a contradiction.

Now assume that $\P$ is a Baer subplane. Since $\P'$ is a parabolic quasi-quadric, every line meets $\P'$ in $0,1$ or $2$ points. Any line, different from $\pi$ meeting $\P$ in $q+1$ points has at least $q$ points of $\P'$, a contradiction if $q>2$. If $(n,q)=(1,2)$, then a Baer subplane $\B=\PG(2,2)$ in $\PG(2,4)$ is a quasi-Hermitian variety. Let $L$ be a line meeting $\B$ in $3$ points. If we switch $\P$ in $L$ by removing the $3$ points of $\B\cap L$, we find a set $\P'$ of $4$ points such that every line meets it in $0=|\Q^-(1,4)|$, $1=|P\Q(0,4)|$ or $2=|\Q^+(1,4)|$ points, which means that $\P'$ is a parabolic quasi-quadric.
By reversing the roles of $\P$ and $\P'$, we deduce that it is impossible for a switched quasi-quadric in $\PG(2n,q)$ where $(n,q) \neq (1,2)$ to be a quasi-Hermitian variety. 
\end{proof}

\begin{corollary}\label{samesize} Let $\P$ be an elliptic or hyperbolic quasi-quadric in $\PG(2n+1,q)$, $q\neq 2$, or a quasi-Hermitian variety in $\PG(m,q)$, and let $\P'$ be the quasi-polar space obtained by switching $\P$ with respect to a hyperplane $\pi$. Then $|\P|=|\P'|$.
\end{corollary}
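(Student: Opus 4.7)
The proof plan is to combine the two preceding lemmas. Since $\P$ and $\P'$ agree outside the hyperplane $\pi$, the claim $|\P|=|\P'|$ is equivalent to $|\P \cap \pi|=|\P' \cap \pi|$, which will follow once $\P$ and $\P'$ are shown to have the same total cardinality.

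First I would invoke Lemma \ref{lem:typepreserved} to conclude that $\P'$ is a quasi-polar space of the same type as $\P$. The two exceptional pairs appearing in that lemma are both excluded by the hypotheses of the corollary: the elliptic--hyperbolic swap requires $q=2$, which the assumption $q \neq 2$ excludes in the quadric case, while the Baer subplane/parabolic swap involves a parabolic quasi-quadric, a type not allowed for $\P$ here.

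Once $\P$ and $\P'$ are known to have the same type, Lemma \ref{lem:correct-points} forces both to have the cardinality of the corresponding classical polar space, so $|\P|=|\P'|$. The only step needing extra attention is the two low-dimensional exceptions of Lemma \ref{lem:correct-points}, namely an elliptic quasi-quadric in $\PG(3,q)$ which might be a line, or a quasi-Hermitian variety in $\PG(2,q^2)$ which might be a Baer subplane. There the cardinality is not a priori pinned down by the type, but the difference $|\P|-|\P'|$ (in absolute value) is bounded by the largest possible hyperplane intersection size, which for $q \geq 3$ is strictly smaller than the gap between the two admissible total cardinalities of quasi-polar spaces of that type. This small numerical check is the only mild obstacle in the argument; the rest is a direct bookkeeping application of Lemmas \ref{lem:typepreserved} and \ref{lem:correct-points}.
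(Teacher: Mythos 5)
Your overall strategy is exactly the paper's: use Lemma \ref{lem:typepreserved} to fix the type, then Lemma \ref{lem:correct-points} to pin down the cardinality, and handle the two low-dimensional exceptions by comparing the number of points outside $\pi$. For the elliptic quasi-quadric in $\PG(3,q)$ your numerical argument is fine: the discrepancy $|\,|\P\cap\pi|-|\P'\cap\pi|\,|$ is at most $(q+1)-1=q$, while the gap between the two admissible sizes is $q^2-q>q$ for $q>2$, which is precisely the paper's computation.

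However, your blanket claim that the bound works ``for $q\geq 3$'' fails in the quasi-Hermitian planar case, and this is a genuine gap. In $\PG(2,4)$ a unital has $9$ points and a Baer subplane has $7$, so the gap between the two admissible cardinalities is $2$; but lines meet both sets in $1$ or $3$ points, so the discrepancy across $\pi$ can also be as large as $2$. Concretely, a unital meeting $\pi$ in $3$ points and a Baer subplane meeting $\pi$ in $1$ point would each have exactly $6$ points outside $\pi$, so the counting argument cannot rule this configuration out. (For $q=9$ and beyond the inequality $q\sqrt{q}-q-\sqrt{q}>\sqrt{q}$ does hold, so only $q=4$ is problematic.) The paper closes this case with a separate combinatorial argument: the $6$ points left after deleting the $3$ points of a secant line from a unital admit at most two $2$-secants through any point, whereas the $6$ points left after deleting one point of a Baer subplane have a distinguished point lying on three $2$-secants, so the two configurations cannot coincide. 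Your proposal needs some such additional argument for $\PG(2,4)$; without it the proof is incomplete.
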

\begin{proof} Let $\P$ be as in the statement, then we have shown in Lemma \ref{lem:typepreserved} that the type of $\P$ does not change when switching. If $\P$ is an elliptic or hyperbolic quasi-quadric or a quasi-Hermitian variety, then by Lemma \ref{lem:correct-points} the number of points in $\P$ is determined by its type, so $|\P|=|\P'|$, unless $\P$ is an elliptic quasi-quadric in $\PG(3,q)$ or $\P$ is a quasi-Hermitian variety in $\PG(2,q)$, $q$ square. If $\P$ is an elliptic quasi-quadric in $\PG(3,q)$ of size $q^2+1$ and $\P'$ is an elliptic quasi-quadric of size $q+1$, then the complement of $\pi$ contains at least $q^2-q$ points of $\P$, whereas the complement of $\pi$ contains at most $q$ points of $\P'$. Since $\P$ and $\P'$ coincide outside $\pi$, and $q>2$ this is a contradiction. Similarly, if $\P$ is a quasi-Hermitian variety in $\PG(2,q)$ of size $q\sqrt{q}+1$ and $\P'$ is a Baer subplane, then the complement of $\pi$ contains at least $q\sqrt{q}-\sqrt{q}$ points of $\P$ and at most $q+\sqrt{q}$ points of $\P'$, a contradiction if $q\neq 4$. If $q=4$, we see that the $6$ points obtained by removing $3$ points of a secant line to a unital cannot be the set of $6$ points obtained by removing one point of a Baer subplane; for the latter set there is a unique point lying on three $2$-secants, whereas the former set has at most two $2$-secants through each point.
\end{proof}

\subsection{Switching in non-singular hyperplanes is impossible}\label{sec:impossible}

We start with an easy lemma which we will use frequently.

\begin{lemma}\label{lem:equal-pointsets}
Let $\mathcal{P}$ and $\mathcal{P'}$ be two point sets in $\PG(m,q)$, such that for every hyperplane $\pi$ of $\PG(m,q)$ one has $|\pi\cap\P|=|\pi\cap\P'|$. Then $\mathcal{P}=\mathcal{P}'$.
\end{lemma}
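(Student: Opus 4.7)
The plan is to use a standard double-counting trick on incidences (point, hyperplane) through a fixed point. The idea is to evaluate, for each point $Q\in\PG(m,q)$, the quantity $\sum_{\pi\ni Q}|\pi\cap\P|$ in two different ways, and compare with the analogous sum for $\P'$.

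First I would observe that the hypothesis immediately gives $|\P|=|\P'|$: summing $|\pi\cap\P|=|\pi\cap\P'|$ over all hyperplanes $\pi$ and noting that each point lies on exactly $\theta_{m-1}:=(q^m-1)/(q-1)$ hyperplanes yields $\theta_{m-1}|\P|=\theta_{m-1}|\P'|$.

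Next, I would fix a point $Q$ and compute $\sum_{\pi\ni Q}|\pi\cap\P|$ by switching the order of summation. A point $P\neq Q$ lies in exactly $\theta_{m-2}=(q^{m-1}-1)/(q-1)$ hyperplanes through $Q$, while $Q$ itself lies in all $\theta_{m-1}$ hyperplanes through $Q$. Writing $\chi_\P(Q)\in\{0,1\}$ for the indicator of membership of $Q$ in $\P$, this gives
\begin{equation*}
\sum_{\pi\ni Q}|\pi\cap\P|=\chi_\P(Q)\,\theta_{m-1}+\bigl(|\P|-\chi_\P(Q)\bigr)\theta_{m-2}=\chi_\P(Q)\,q^{m-1}+|\P|\,\theta_{m-2},
\end{equation*}
and identically for $\P'$. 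Subtracting the two identities and using the hypothesis (the two left-hand sides are equal, hyperplane by hyperplane) together with $|\P|=|\P'|$ yields $q^{m-1}\bigl(\chi_\P(Q)-\chi_{\P'}(Q)\bigr)=0$, so $\chi_\P(Q)=\chi_{\P'}(Q)$. Since $Q$ was arbitrary, $\P=\P'$.

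There is no real obstacle here: the only things to check are the incidence counts $\theta_{m-1}$ and $\theta_{m-2}$ and the fact that the coefficient $q^{m-1}=\theta_{m-1}-\theta_{m-2}$ is nonzero (which is automatic since $q\geq 2$ and $m\geq 1$). So the argument is purely a linear-algebraic/double-counting one and works uniformly in $m$ and $q$.
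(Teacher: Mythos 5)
Your proof is correct and is essentially the same as the paper's: both first deduce $|\P|=|\P'|$ by summing over all hyperplanes, then fix a point $Q$ and double-count incidences through $Q$ to extract the indicator function, using that the coefficient $\theta_{m-1}-\theta_{m-2}=q^{m-1}$ is nonzero. The paper phrases the second step as counting triples $(Q,R,H)$ with $R\in\P\setminus\{Q\}$, which after rearrangement is exactly your identity.
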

\begin{proof}
Counting pairs $(x,H)$, where $x\in \mathcal{P}$ and where $H$ is a hyperplane containing $x$, in two ways yields that
$|\mathcal{P}|\frac{q^m-1}{q-1} = \sum_{H\in \PG(m,q)} |H \cap \mathcal{P}|$. Similarly, we have that $|\mathcal{P'}|\frac{q^m-1}{q-1} = \sum_{H\in \PG(m,q)} |H \cap \mathcal{P'}|$.
Since $|H \cap \mathcal{P}|=|H \cap \mathcal{P'}|$ for all $H$, it follows that $|\mathcal{P}|=|\mathcal{P'}|$.
 
 Let $Q$ be a point in $\PG(m,q)$ and let $I_\P(Q)$ be the indicator function with respect to $\mathcal{P}$. Considering triples $(Q,R,H)$ where the point $R\neq Q$ is contained in $\P$ and the hyperplane $H$ contains both $Q$ and $R$ we obtain $$(|\mathcal{P}|-I_\P(Q))\frac{q^{m-1}-1}{q-1} = \sum_{H\ni p} (|H\cap \mathcal{P}|  - I_\P(Q)).$$ A similar equation holds for $\mathcal{P'}$, with indicator function $I_{\P'}$. Since $|\mathcal{P}|=|\mathcal{P'}|$ and since $|H\cap \mathcal{P}| = |H\cap \mathcal{P'}|$ for all hyperplanes $H$ this implies that $I_\P(Q)=I_{\P'}(Q)$. Hence, for all points $Q$, we have that $Q\in \P$ if and only if $Q\in \P'$, so $\P=\P'$.
\end{proof}

In most of what follows, we will consider quasi-polar spaces in $\PG(m,q)$, $m\geq 3$. The following proposition shows that this condition is not a restriction since switching for quasi-polar spaces contained in $\PG(2,q)$ is essentially trivial. We use standard terminology to call a parabolic quasi-quadric of size $|\Q(2,q)|=q+1$ an {\em oval}, and a quasi-Hermitian variety in $\PG(2,q)$ a {\em unital}.

\begin{proposition} \label{prop:planecases} 
Let $\P$ be a unital in $\PG(2,q)$, $q>4$ a square, and let $\P'$ be a unital obtained by switching $\P$. Then $\P=\P'$.

Let $\P$ be an oval in $\PG(2,q)$, $q>3$ odd,  and let $\P'$ be an oval obtained by switching $\P$ in the line $L$. Then $\P=\P'$.

Let $\P$ be an oval in $\PG(2,q)$, $q$ even, and let $\P'$ be an oval obtained by switching $\P$ in the line $L$. Then either $\P=\P'$ or $L$ is a tangent to $\P$ in the point $P$ and $\P'$ consists of the points of $\P\setminus \{P\}\cup \{N\}$ where $N$ is the nucleus of $\P$.
\end{proposition}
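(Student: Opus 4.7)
The plan in all three cases is to exploit that $\P$ and $\P'$ agree outside $L$, so $\P\triangle\P'\subseteq L$ and for every line $M\neq L$ the two intersections $M\cap\P$ and $M\cap\P'$ differ at most in the single point $M\cap L$. In the unital case (so $q\geq 9$), both $|M\cap\P|$ and $|M\cap\P'|$ lie in $\{1,q+1\}$ and differ by at most $1$, which forces equality and in fact $M\cap\P=M\cap\P'$. Sweeping $M$ through any point of $L$ then gives $\P=\P'$.

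For the oval cases, Corollary \ref{samesize} (or a direct cardinality check from the definition of an oval) yields $|\P|=|\P'|=q+1$ and hence $|L\cap\P|=|L\cap\P'|$. Assume $\P\neq\P'$ and pick $X\in L\cap(\P\setminus\P')$. For any line $M\neq L$ through $X$ the identity $M\cap\P'=(M\cap\P)\setminus\{X\}$ holds, since $M$ meets $L$ only at $X$ and no point of $\P'\setminus\P\subseteq L\setminus\{X\}$ therefore lies on $M$. Consequently each secant of $\P$ through $X$ becomes a tangent of $\P'$ and the tangent of $\P$ at $X$ becomes external to $\P'$. Counting tangents to $\P'$ through $X$ (including $L$ itself) yields $k'=q+1$ when $L$ is tangent to $\P$ at $X$ and $k'=q-1$ when $L$ is secant to $\P$.

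For $q$ odd with $q>3$, Segre's theorem forces $\P'$ to be a conic, so through any point off $\P'$ the number of tangents is $0$ or $2$; since $q-1\geq 4$ when $q\geq 5$, both values $q\pm 1$ are excluded, giving a contradiction and hence $\P=\P'$. For $q$ even, through a point off an oval the number of tangents is either $1$ (non-nucleus) or $q+1$ (nucleus), so $k'=q-1$ is impossible when $q\geq 4$, ruling out the secant case; the tangent case forces $L$ to be tangent to $\P$ at $X$ and $X$ to be the nucleus of $\P'$. Writing $\P'=(\P\setminus\{X\})\cup\{Z\}$ for the unique $Z\in L\cap(\P'\setminus\P)$, a symmetric count at $Z$ shows that every line through $Z$ is a tangent to $\P$, which identifies $Z$ with the nucleus $N$ of $\P$. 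The main obstacle is to set up these dual counts through $X$ and $Z$ cleanly and to invoke the correct classical dichotomy (Segre in the odd case, the unique nucleus in the even case) at the right moment to close the argument.
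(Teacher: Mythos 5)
Your argument is correct and reaches the stated conclusions, but by a genuinely different route from the paper's. The paper examines the candidate \emph{added} points: for each $Q\in L\setminus\P$ it exhibits a line through $Q$, different from $L$, whose intersection with $\P'$ would become illegal if $Q$ were added (a tangent or secant for the unital; a $2$-secant, resp.\ a $2$-secant and a passant, for the oval with $q$ odd, resp.\ even), and then closes with the count $|L\cap\P|=|L\cap\P'|$. You instead fix a \emph{removed} point $X\in L\cap(\P\setminus\P')$, note that $M\cap\P'=(M\cap\P)\setminus\{X\}$ for every line $M\neq L$ through $X$, and count tangents to $\P'$ through $X$, contradicting the classical tangent dichotomy ($0$ or $2$ for $q$ odd, $1$ or $q+1$ for $q$ even). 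Your unital argument is cleaner than the paper's: the two line-intersection numbers of a unital in $\PG(2,q)$, $q$ square, are $1$ and $\sqrt{q}+1$ (not $1$ and $q+1$ as you wrote, though this changes nothing), so they differ by $\sqrt{q}\geq 3$, and agreement off the single point $M\cap L$ already forces $M\cap\P=M\cap\P'$ with no appeal to Corollary \ref{samesize} or to the tangent/secant distribution through external points. In the odd case Segre's theorem is heavier machinery than necessary --- the $0$-or-$2$ tangent property holds for arbitrary ovals with $q$ odd by elementary counting --- but it is not wrong. Finally, your even-characteristic case explicitly needs $q\geq 4$ to exclude $k'=q-1$; the paper's proof tacitly needs the same restriction (for $q=2$ a non-nucleus point of $L\setminus\P$ lies on only one secant, which may be $L$ itself), and in fact the statement fails in $\PG(2,2)$: switching the triangle $\{A,B,C\}$ in the secant $AB$ to $\{A,P,C\}$, where $P$ is the third point of $AB$, yields an oval although $AB$ is not a tangent. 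So that restriction is a defect of the statement rather than of your proof.
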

\begin{proof} Let $\P$ be a unital. Recall that $|\P|=|\P'|$ by Corollary \ref{samesize}, and that $\P$ and $\P'$ coincide outside $\pi$, so $|\P\cap \pi|=|\P'\cap \pi|$. Let $Q$ be a point of $\pi$, not in $\P$, then $Q$ lies on at least one tangent line and at least one secant line to $\P$, different from $\pi$. It follows that $Q\notin \P'$ for all points on $\pi$ not in $\P$. But since $|\P\cap \pi|=|\P'\cap \pi|$, this implies that the points of $\pi\cap \P$ need to be contained in $\pi\cap \P'$. Hence, $\P=\P'$.

Let $\P$ be an oval in $\PG(2,q)$, $q$ odd. Since we assume that $\P'$ is an oval, we have that $|\P\cap L|=|\P'\cap L|$. If $Q$ is a point of $\pi$, not in $\P$, then, since $q>3$ is odd, $Q$ lies on at least one $2$-secant and at least one passant to $\Q$, different from $\pi$. It follows that $Q\notin \P'$, and hence, as above, that $\P=\P'$.

Finally, let $\P$ be an oval in $\PG(2,q)$, $q$ even and let $N$ be its nucleus.  Since we assume that $\P'$ is an oval, we have that $|\P\cap L|=|\P'\cap L|$, so if $\P\neq \P'$, necessarily $L$ is a secant or tangent line. Every point $Q$ of $L$ not in $\P$, different from the nucleus $N$, lies on at least one passant and $2$-secant. It follows that $Q\notin \P'$, and hence, that $\P'$ is different from $\P$ only if $L$ is a tangent line to $\P$, say in the point $P$, and $\P'$ is obtained by removing the point $P$ and adding the point $N$. 
\end{proof}

\begin{lemma}\label{lem:switching-nonsingular} Let $\P$ be a quadric or a Hermitian variety in $\PG(m,q)$, $m\geq 3$ with $q\neq 2$ for elliptic and hyperbolic quadrics and $q\geq 4$ for parabolic quadrics. 
Let $\P'\neq \P$ be a quasi-quadric or quasi-Hermitian variety obtained from switching $\P$ in the hyperplane $\pi$, then $\pi$ is a singular hyperplane.
\end{lemma}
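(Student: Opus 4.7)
The plan is to prove the contrapositive: if $\pi$ is a non-singular hyperplane of $\P$, then $\P'=\P$, contradicting the hypothesis $\P'\ne\P$. Since $\P$ and $\P'$ coincide outside $\pi$, this reduces to showing $\P\cap\pi = \P'\cap\pi$. Regarding $\pi$ as $\PG(m-1,q)$ and applying Lemma~\ref{lem:equal-pointsets} to the two subsets $\P\cap\pi$ and $\P'\cap\pi$, it further suffices to verify that $|\mu\cap\P|=|\mu\cap\P'|$ for every hyperplane $\mu$ of $\pi$ (equivalently, every codimension-$2$ subspace of $\PG(m,q)$ contained in $\pi$).

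\textbf{Constancy identity.} Fix such a $\mu$ and list the $q+1$ hyperplanes of $\PG(m,q)$ through $\mu$ as $\pi=\pi_0,\pi_1,\ldots,\pi_q$. Since $\pi_i\cap\pi=\mu$ for $i\ge 1$ and $\P,\P'$ agree off $\pi$, splitting $|\pi_i\cap\X|$ as $|\mu\cap\X|+|(\pi_i\setminus\mu)\cap\X|$ for $\X\in\{\P,\P'\}$ gives
\begin{equation*}
\delta(\mu):=|\mu\cap\P'|-|\mu\cap\P|\;=\;|\pi_i\cap\P'|-|\pi_i\cap\P|\quad\text{for every }i\ge 1,
\end{equation*}
so the right-hand side is independent of $i$. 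Since each $|\pi_i\cap\P|, |\pi_i\cap\P'|$ must be one of the (two or three) allowed quasi-polar hyperplane intersection numbers, $\delta(\mu)\ne 0$ forces all $q$ partner hyperplanes $\pi_1,\ldots,\pi_q$ to share a single $\P$-type and, simultaneously, a common different $\P'$-type.

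\textbf{Elliptic, hyperbolic, and Hermitian cases.} Using the polarity $\tau$ associated to $\P$, the $q+1$ hyperplanes through $\mu$ are parametrised by the points of the polar line $\mu^\tau$, and a hyperplane $H$ is singular iff $H^\tau\in\P$. Since $\pi$ is non-singular, $\pi^\tau\notin\P$, and the number of singular partners of $\mu$ equals $|\mu^\tau\cap\P|$, a classical line-intersection number lying in $\{0,1,2,q+1\}$ for quadrics and in $\{1,\sqrt{q}+1,q+1\}$ for Hermitian varieties. One of the two sign choices for $\delta(\mu)$ would demand $|\mu^\tau\cap\P|=q$, which is excluded precisely when $q\ne 2$ (quadric case) and automatically in the Hermitian case; hence $\delta(\mu)$ has one fixed sign as $\mu$ varies. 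Combining this with the identity
\begin{equation*}
\sum_\mu\delta(\mu)\;=\;\bigl(|\P'\cap\pi|-|\P\cap\pi|\bigr)\cdot\frac{q^{m-1}-1}{q-1}\;=\;0,
\end{equation*}
where the last equality uses $|\P|=|\P'|$ from Corollary~\ref{samesize}, the one-sided bound forces $\delta(\mu)=0$ for every $\mu$, completing the proof in these three cases.

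\textbf{Parabolic case (main obstacle).} The parabolic case $\P=\Q(2n,q)$ needs more care: there are three allowed hyperplane intersection values (so $\delta(\mu)$ can a priori take five values) and in even characteristic the bilinear polarity degenerates and one argues via the nucleus instead. Distributing the $q$ partners of $\mu$ among the three types (hyperbolic, elliptic, singular), each configuration with $\delta(\mu)\ne 0$ is eliminated by a direct count in the line-geometry of $\Q(2n,q)$, paralleling the impossibility of $|\mu^\tau\cap\P|=q$ above; the hypothesis $q\ge 4$ is precisely what this step needs. The matching cardinality identity $|\P|=|\P'|$ is re-established from the three-value refinement of Lemma~\ref{lem:correct-points}, after which the same summation argument yields $\delta(\mu)=0$ for every $\mu$, hence $\P=\P'$.
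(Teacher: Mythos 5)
Your argument for the elliptic, hyperbolic and Hermitian cases is correct and takes a genuinely different route from the paper's. The paper pins down $|\mu\cap\P'|$ locally by exhibiting, through most codimension-$2$ spaces $\mu$ of $\pi$, hyperplanes of two different types other than $\pi$ (which forces your $\delta(\mu)=0$ there), and then resolves the remaining ambiguous $\mu$ by the global fact that the number of hyperplanes of each type is a constant (Lemma \ref{lem:correct-points}). Your alternative --- counting singular hyperplanes through $\mu$ as the line-intersection number $|\mu^\tau\cap\P|$, observing that one sign of $\delta(\mu)$ would force this number to be $q$, and then killing the remaining sign with the zero-sum identity coming from $|\P|=|\P'|$ --- is clean and valid in these two-intersection-number cases, and the hypothesis $q\neq 2$ enters for essentially the same reason in both treatments.

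The parabolic case, however, is not actually proved, and one step is wrong as stated. First, with three intersection numbers $A<B<C$ satisfying $C-B=B-A=q^{n-1}$, a nonzero $\delta(\mu)$ does \emph{not} force a single $\P$-type on the partners (for instance $\delta(\mu)=q^{n-1}$ is compatible with a mix of elliptic and singular partners), so the elimination of configurations is genuinely harder than ``paralleling the impossibility of $|\mu^\tau\cap\P|=q$''; this is exactly where the paper has to work (showing that for $\mu$ meeting $\P$ in a cone only two values of $|\mu\cap\P'|$ survive, and that for $\mu$ meeting $\P$ in a $\Q(2n-2,q)$ the value is pinned, which is where $q\geq 4$ is used), and you do not carry out any of it. Second, and more seriously, your closing summation needs $|\P'\cap\pi|=|\P\cap\pi|$, which you propose to obtain from ``the three-value refinement of Lemma \ref{lem:correct-points}'' applied to the parabolic quasi-quadric $\P'$. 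No such refinement exists: Lemma \ref{lem:correct-points} deliberately excludes the parabolic case, and Remark \ref{rem:sizeparabolic} records that it is an open question whether the three parabolic intersection numbers determine the cardinality at all. The paper circumvents this by first showing that $\P'\cap\pi$ is a two-intersection-number set in $\pi\cong\PG(2n-1,q)$ (an elliptic or hyperbolic quasi-quadric), whose size \emph{is} determined by Lemma \ref{lem:correct-points}; some such device is needed before your summation can close the parabolic case.
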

\begin{proof} {\bf Case 1: $\P=\mathcal{Q}^\pm(2n+1,q)$}. We know that every hyperplane meets $\P$ in $|\Q(2n,q)|$ or $|P\Q^\pm(2n-1,q)|$ points. Suppose that $\pi$ is a non-singular hyperplane (i.e., meeting $\P$ in a $\Q(2n,q)$), and that $\P'$ is a quasi-quadric obtained by switching in $\pi$. We will deduce that $\P=\P'$ by showing that the conditions of Lemma \ref{lem:equal-pointsets} are met.

From Corollary \ref{samesize}, we know that  $\P'$ has size $|\P|$, which implies that $|\pi\cap \P'|=|\Q(2n,q)|$. Let $\mu$ be a hyperplane in $\pi$, then $\mu$ meets $\Q(2n,q)$ in either an elliptic quadric, a cone $P\Q(2n-2,q)$ or a hyperbolic quadric. 

If $\mu$ is a hyperplane of $\pi$ meeting $\P$ in a $\Q^\mp(2n-1,q)$, then $\mu$ lies only on non-singular hyperplanes, since a $P\Q^\pm(2n-1,q)$ does not have hyperplanes meeting in a $\Q^\mp(2n-1,q)$. That means that $\mu$ has either $|\Q^\mp(2n-1,q)|$ or $|\Q^\mp(2n-1,q)|$+ $||P\Q^\pm(2n-1,q)|-|\Q(2n,q)||=|\Q^\mp(2n-1,q)|+q^{n}$ points of $\P'$ in $\pi$.

If $\mu$ is a hyperplane of $\pi$ meeting $\P$ in a $\Q^\pm(2n-1,q)$, then $\mu$ lies on exacty two singular and $q-1$ non-singular hyperplanes, one of which is $\pi$. Hence, if $q>2$, there is at least one hyperplane through $\mu$, different from $\pi$ with $||\Q(2n,q)|-|\Q^\pm(2n-1,q)||$ points of $\P$ outside of $\mu$ and there are two hyperplanes through $\mu$ with $|P\Q^\pm(2n-1,q)|-|\Q^\pm(2n-1,q)|$ points outside of $\mu$. This means that $\mu$ has to meet $\P'$ in exactly $|\Q^\pm(2n-1,q)|$ points.

If $\mu$ is a hyperplane of $\pi$ meeting $\P$ in a $P\Q(2n-2,q)$, then $\mu$ lies on a unique hyperplane meeting $\P$ in a $P\Q^\pm(2n-1,q)$ and $q$ hyperplanes meeting $\P$ in a $Q(2n,q)$, one of which is $\pi$. It follows that $\mu$ needs to contain exactly $|P\Q(2n-2,q)|$ points of $\P'$.

So we find that $\T=\P'\cap \pi$ is a set of $|\Q(2n,q)|$ points in $\PG(2n,q)$ such that every hyperplane meets in $\T$ in $|\Q^\pm(2n-1,q)|,|P\Q(2n-2,q)|,|\Q^\mp(2n-1,q)|$ points. But furthermore, the set of hyperplanes meeting in $|P\Q(2n-2,q)|$ or $|\Q^\pm(2n-1,q)|$ points is the same as the set of hyperplanes meeting a fixed $\Q=\Q(2n,q)$. In Lemma \ref{lem:correct-points}, we have shown that the number of hyperplanes of each type meeting a quasi-quadric is a constant. Since the hyperplanes meeting $\P\cap \pi$ in $|P\Q(2n-2,q)|$ or $|\Q^\pm(2n-1,q)|$ points respectively are meeting $\P'\cap \pi$ in $|P\Q(2n-2,q)|$ or $|\Q^\pm(2n-1,q)|$ points respectively, it follows that all hyperplanes meeting $\P\cap \pi$ in $|\Q^\mp(2n-1,q)|$ points also meet $\P'\cap \pi$ in $|\Q^\mp(2n-1,q)|$ points. By Lemma \ref{lem:equal-pointsets} we obtain that $\P\cap\pi=\P'\cap \pi$, and hence, since $\P$ and $\P'$ coincide outside $\pi$, that $\P=\P'$.

{\bf Case 2 and Case 3} are dealt with in a similar way. The proofs are contained in Appendix A.
\end{proof}

\begin{remark}\label{remark36} In \cite[Theorems 13-16]{QQ}, it is shown that for $\Q^-(2n+1,2)$ and $\Q^+(2n+1,2)$, it is possible to switch in a non-singular hyperplane and obtain a quasi-quadric which is not a quadric when $n\geq 2$ (Every hyperbolic or elliptic quasi-quadric in $\PG(3,2)$ is a quadric.). Hence, the condition $q\neq 2$ in the above theorem is necessary.

In Appendix B, we will show that there exists a quasi-quadric, which is not a quadric, obtained by switching in non-singular hyperplane of the parabolic quadrics  $Q(2n,2)$, $n\geq2$ (Proposition \ref{Q2n2}) and $\Q(2n,3)$, $n\geq 2$ (Proposition \ref{Q2n3}). We will discuss pivoting for ovals in $\PG(2,3)$ there too (Proposition \ref{Q23}).

\end{remark}

\section{Switching in singular hyperplanes}\label{sec:switching-singular}

\subsection{Switching is pivoting}\label{sec:switching-pivoting}

A proof of the following result from the theory of polar spaces can be extracted from \cite{GGG}. It will be used to prove that certain intersection sizes with hyperplanes need to preserved after switching, which severely restricts the possibilities.

\begin{lemma}\label{lem:classical2} Let $\P$ be a non-singular, non-symplectic polar space in $\PG(m,q)$ and let $\pi$ be a singular hyperplane meeting $\P$ in the cone $P\C$. The following holds:
\begin{itemize}
\item If $\nu$ is a hyperplane of $\pi$ through $P$, then the $q$ hyperplanes through $\nu$, different from $\pi$,  are all of the same type as $\nu\cap \C$.
\item If $\nu$ is a hyperplane of $\pi$, not through $P$, then
\begin{itemize}
\item If $\P$ is a quadric but not $\Q(2n,q)$, $q$ even, then there are $2$ singular and $q-1$ non-singular hyperplanes through $\nu$. Furthermore, if $\P$ is a parabolic quadric and $q$ is odd, then $(q-1)/2$ of the non-singular hyperplanes are elliptic, and $(q-1)/2$ of the non-singular hyperplanes are hyperbolic. 
\item If $\P=\Q(2n,q)$, $q$ even, and $\nu$ does not contain the nucleus $N$, then there are $q/2$ elliptic and $q/2$ hyperbolic hyperplanes through $\nu$.
\item If $\P$ is a quasi-Hermitian variety, then there are $\sqrt{q}$ singular and $q-\sqrt{q}$ non-singular hyperplanes through $\nu$, different from $\pi$.
\end{itemize}
\end{itemize}
\end{lemma}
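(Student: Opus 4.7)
The plan is to work through the polarity $\perp$ of $\P$, which is non-degenerate except when $\P=\Q(2n,q)$ with $q$ even, where its radical is spanned by the nucleus $N$. In all cases the tangent hyperplane at $P\in\P$ is $P^\perp$, so $\pi=P^\perp$, and a hyperplane $\mu$ is singular iff $\mu=R^\perp$ for some $R\in\P$ (non-degenerate case) or $N\in\mu$ (parabolic even case). These standard dictionaries are what makes all the claims amenable to short arguments.

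For the first bullet, I would fix any of the $q$ hyperplanes $\mu\supseteq\nu$ with $\mu\neq\pi$. Since $P\in\mu$ and $\mu\neq P^\perp$, the point $P$ is regular for $\mu\cap\P$, and the tangent hyperplane to $\mu\cap\P$ at $P$ \emph{within} $\mu$ is $\mu\cap P^\perp=\mu\cap\pi=\nu$. Therefore the tangent cone of $\mu\cap\P$ at $P$ equals $\nu\cap\P=P\cdot(\nu\cap\C)$, with base $\nu\cap\C$ that is independent of $\mu$. Since the type of a polar space is determined by the isomorphism class of the base of its residue at a regular point, all $q$ sections $\mu\cap\P$ share the common type of $\nu\cap\C$.

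For the second bullet, assume $P\notin\nu$. I would exploit the duality that parametrizes hyperplanes through $\nu$ by the points of the polar $\nu^\perp$ (a line in the non-degenerate cases), under which singular hyperplanes correspond to $\nu^\perp\cap\P$. From $\nu\subseteq\pi=P^\perp$ we get $P\in\nu^\perp$, while $P\notin\nu$ forces $\nu^\perp\not\subseteq\pi$; hence $\nu^\perp$ is a line through $P$ not tangent to $\P$ at $P$. Such a line meets $\P$ in exactly $2$ points (quadric case) or $\sqrt{q}+1$ points (Hermitian case), immediately yielding the stated counts of singular versus non-singular hyperplanes. For $\Q(2n,q)$ with $q$ even and $N\notin\nu$, the unique hyperplane through $\nu$ containing $N$ is $\pi=\langle N,\nu\rangle$, so only $\pi$ among the $q+1$ hyperplanes through $\nu$ is singular.

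The technical heart is the elliptic/hyperbolic split in the parabolic case. My plan is to coordinatize so that $\P=\Q(2n,q)$ is the zero set of a quadratic form $Q$, parametrize the pencil through $\nu$ as $H_t$ for $t\in\Fq\cup\{\infty\}$, and track the invariant that distinguishes elliptic from hyperbolic: the discriminant of $Q|_{H_t}$ in odd characteristic, and the Arf invariant of $Q|_{H_t}$ in even characteristic. After removing the values of $t$ corresponding to singular sections, a direct computation should show that this invariant is equidistributed on the remaining parameter values, producing the splits $(q-1)/2,(q-1)/2$ and $q/2,q/2$ respectively. Verifying this equidistribution — particularly for the Arf invariant in characteristic $2$, where the polarity is degenerate — is the main obstacle; once it is in place, the remaining parts of the lemma follow from the residue argument and the duality count above.
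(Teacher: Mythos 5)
The paper never proves this lemma itself --- it only remarks that a proof ``can be extracted from'' \cite{GGG} --- so the comparison is against what a complete argument must contain. Your residue argument for the first bullet and your $\nu^\perp$-duality count for the singular hyperplanes in the second bullet are correct and standard: since $\nu\subseteq P^\perp$ but $P\notin\nu$, the line $\nu^\perp$ passes through $P$ and is not contained in $P^\perp$, hence is a secant meeting $\P$ in $2$ (quadric) or $\sqrt q+1$ (Hermitian) points, which gives exactly the stated singular counts; and in the degenerate parabolic even case the unique hyperplane of the pencil through $N$ is $\langle\nu,N\rangle=\pi$. Those parts are sound.

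The genuine gap is the one you flag yourself: the elliptic/hyperbolic equidistribution in the parabolic case is only announced (``a direct computation should show \dots''), not carried out, and it is the single non-routine assertion in the lemma --- so as written the argument is incomplete precisely at its acknowledged hardest point. Moreover the discriminant/Arf route is unnecessarily heavy and, in characteristic $2$, genuinely delicate; the split follows instead from a point count using data you have already established. Let $e$, $h$, $s$ be the numbers of elliptic, hyperbolic and singular hyperplanes through $\nu$. Summing $|\mu\cap\P|-|\nu\cap\P|$ over the pencil and using $|\Q^{\pm}(2n-1,q)|-|\Q(2n-2,q)|=q^{2n-2}\pm q^{n-1}$ and $|P\Q(2n-2,q)|-|\Q(2n-2,q)|=q^{2n-2}$ gives
$$(e+h+s)\,q^{2n-2}+(h-e)\,q^{n-1}=|\Q(2n,q)|-|\Q(2n-2,q)|=(q+1)\,q^{2n-2}.$$
Since $e+h+s=q+1$, this forces $h=e$; combined with $s=2$ ($q$ odd) or $s=1$ ($q$ even, $N\notin\nu$) from your duality count, we get $e=h=(q-1)/2$, respectively $q/2$. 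Until either this count or your invariant computation is actually written out, the proposal does not establish the lemma.
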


The following lemma is the crucial technical result needed to prove that switching is pivoting.

\begin{lemma}  \label{lem:conelike}
Let $\mathcal{K}$ be a set of points in $\PG(m,q)$ and let $P_1$ and $P_2$ be two points of $\PG(m,q)$, not in $\K$, such that all hyperplanes of $\PG(m,q)$  containing neither $P_1$ nor $P_2$ contain a fixed number $s$ of points of $\mathcal{K}$. Then every plane through $P_1P_2$ meets $\K\setminus P_1P_2$ in a set of (truncated) lines through $P_i$, where depending on the plane  $i$ is either $1$ or $2$. 
Moreover the set $\K \cap P_1P_2$ contains all or none of the points of $P_1P_2$ different from $P_1$ and $P_2$. 
\end{lemma}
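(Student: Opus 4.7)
My plan is to work one plane at a time: fix a plane $\alpha$ through $L := P_1P_2$ and analyze $\K \cap \alpha$ using the hyperplane hypothesis applied \emph{only} to hyperplanes not containing $\alpha$, for which avoidance of $P_1$ and $P_2$ will be automatic. The first key point is that for any line $\ell \subset \alpha$ through a point $R \in L \setminus \{P_1, P_2\}$ with $\ell \neq L$, the intersection size $|\K \cap \ell|$ is a constant depending only on $\alpha$. Indeed, any hyperplane $H$ with $\ell \subset H$ and $\alpha \not\subset H$ must avoid both $P_i$: if $P_1 \in H$, then $H \supseteq \langle \ell, P_1\rangle = \alpha$, a contradiction. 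The hypothesis therefore gives $|H \cap \K| = s$ for all such $H$, and the standard double count of $(Y,H)$-incidences yields a linear equation that determines $|\K \cap \ell|$ as a function of $|\K \setminus \alpha|$ alone; call this constant $k_\alpha$.

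The $q$ lines of $\alpha$ through any fixed $R \in L \setminus \{P_1, P_2\}$ other than $L$ partition $\alpha \setminus L$, so $|\K \cap (\alpha \setminus L)| = q\bigl(k_\alpha - [R \in \K]\bigr)$; independence from the choice of $R$ forces $[R \in \K]$ to be the same for every $R \in L \setminus \{P_1, P_2\}$, which is the second assertion of the lemma.

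For the cone structure, set $k' = k_\alpha - [R \in \K]$ so that $|\K \cap (\alpha \setminus L)| = qk'$. For any point $X \in \alpha \setminus L$, applying the same double-counting trick \emph{inside} $\alpha$ -- summing $|\K \cap \ell|$ over the $q+1$ lines of $\alpha$ through $X$, of which one is $XP_1$, one is $XP_2$, and the remaining $q-1$ are good lines of common intersection size $k_\alpha$ -- gives the key identity
\[
|\K \cap XP_1| + |\K \cap XP_2| = k' + q\cdot [X \in \K].
\]
Writing $a_i$, $b_j$ for the intersection sizes of $\K$ with the $q$ lines of $\alpha$ through $P_1$, respectively $P_2$, other than $L$, this reads $a_i + b_j \in \{k', k' + q\}$ for every pair $(i, j)$.

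I would conclude by a short case analysis. If all $a_i$ coincide, they each equal $k'$ (because $\sum_i a_i = qk'$) and the identity forces $b_j \in \{0, q\}$ for every $j$, i.e.\ a cone with apex $P_2$ in $\alpha$. Otherwise two distinct values $a_{i_1} \neq a_{i_2}$ occur; the sets $\{k'-a_{i_1}, k'+q-a_{i_1}\}$ and $\{k'-a_{i_2}, k'+q-a_{i_2}\}$ share a common element -- which every $b_j$ must belong to -- only when $|a_{i_1} - a_{i_2}| = q$, forcing $\{a_{i_1}, a_{i_2}\} = \{0, q\}$. This pins down $b_j = k'$ for all $j$ and substituting back gives $a_i \in \{0, q\}$ for all $i$, a cone with apex $P_1$. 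The main technical obstacle is carrying out the two double-counts cleanly so that the identity $a_i + b_j \in \{k', k'+q\}$ emerges in this transparent form; once that is in hand, the endgame is just an elementary arithmetic check on $\{0, q\}$-valued parameters.
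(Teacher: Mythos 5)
Your proof is correct, and while it follows the same overall skeleton as the paper's --- work plane by plane through $P_1P_2$, show every line of the plane avoiding $P_1$ and $P_2$ meets $\K$ in a constant, settle the all-or-none claim on $P_1P_2$, then exploit the identity $|\K\cap XP_1|+|\K\cap XP_2|=k'+q\cdot[X\in\K]$ --- the execution of each stage is genuinely different. Where the paper reaches the constant on good lines by inducting down through intermediate subspaces $\langle\mu,P_1\rangle$, you get it in one step by double counting over the $q^{m-2}$ hyperplanes containing $\ell$ but not $\alpha$ (all of which automatically avoid $P_1,P_2$); and your all-or-none argument via the partition of $\alpha\setminus P_1P_2$ by the lines through a point $R$ of $P_1P_2$ replaces the paper's divisibility-mod-$(q-1)$ count. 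The most substantial divergence is the endgame: the paper argues by contraposition, exhibiting a single line through $P_1$ carrying both a point of the set and a point outside it, deducing from the same counting identity that the corresponding $P_2$-lines have $q$ resp.\ $0$ points, and propagating along that one line; you instead encode the identity as the constraint $a_i+b_j\in\{k',k'+q\}$ on the two families of apex-lines and classify all solutions directly, which makes the dichotomy (either all $a_i$ equal $k'$ and every $b_j\in\{0,q\}$, or symmetrically with the roles swapped) explicit and symmetric in $P_1$ and $P_2$. The only point you should add is a one-line disposal of $m=2$, where your hyperplane count in the first step degenerates because there are no hyperplanes containing $\ell$ but not $\alpha$; there the good lines are themselves hyperplanes avoiding $P_1$ and $P_2$, so $k_\alpha=s$ directly and the rest of your argument is unaffected.
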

\begin{proof} Consider an $(m-2)$-dimensional space $\mu$ meeting the line $P_1P_2$ in a point $Q$, different from $P_1$ and $P_2$. Let $\pi=\langle \mu,P_1\rangle$, then $P_1P_2\in \pi$. Denote $|\K\cap \mu|$ by $x_\mu$. The $q$ hyperplanes through $\mu$, different from $\langle \mu,P_1\rangle$, then each contain $s-x_\mu$ points not in $\mu$. It follows that $$|\K|=|\K\cap \pi|+q(s-x_\mu).$$ 

It follows that $x_\mu$ is uniquely determined by $|\K\cap \pi|$. In other words, all hyperplanes, not through $P_1$ nor $P_2$ in the $(m-1)$-space $\pi$ contain a fixed number of points.
Repeating this argument inductively, we find that for every plane $\xi$ through $P_1P_2$, each line of $\xi$ not through $P_1$ nor $P_2$ meets $\K$ in a fixed number, say $y$ points. Note that $y$ as well as other quantities below depend on $\xi$ but we omit this in our notation.

Consider the points in $\xi\cap \K$, not contained in the line $P_1P_2$. We first show that either all points of $P_1P_2$, different from $P_1$ and $P_2$ belong to $\K$, or none of those belong to $\K$.
Counting incident pairs $(P,L)$, where $P$ is a point of $\K$ and $L$ is a line not through $P_1$ or $P_2$, yields:
$$(q^2-q)y=|\K\cap P_1P_2|q+|\K\setminus (\K\cap P_1P_2)|(q-1).$$
Since the left hand side is a multiple of $q-1$, it follows that $\K\cap P_1P_2$ is a multiple of $q-1$. So $\K\cap P_1P_2=0$ or $q-1$ as claimed. 

This implies that every line, not through $P_1$ nor $P_2$ has a fixed number, say $x$, of points of the set $\K':=(\xi\cap\K)\setminus (\K\cap P_1P_2)$ (note that $x=y$ or $x=y-1$).

Let $Q$ be a point of $P_1P_2$, different from $P_1$ and $P_2$. Since every line in $\xi$ through $Q$ contains $x$ points of $\K'$, we have that $|\K'|=qx$. 

If $\K'$ does not consist of the points of a set of lines through $P_1$ with $P_1$ removed, then
we find a point $R$ of $\K'$ and a point $S\notin \K'$, not on $P_1P_2$, on $RP_1$. 

Counting points of $\K'$ on lines through $R$ yields

$$(q-1)(x-1)+|P_1R\cap \K'|+|P_2R\cap \K'|-1=qx,$$
and similarly for $S$, we find
$$(q-1)x+|P_1S\cap \K'|+|P_2S\cap \K'|=qx.$$
It follows that $|P_1R\cap \K'|+|P_2R\cap \K'|=q+x$ and $|P_1S\cap \K'|+|P_2S\cap \K'|=x$. Since $P_1R=P_1S$, it follows that

 $$|P_2R\cap \K'|-|P_2S\cap \K'|=q.$$
 
 Now recall that $P_2\notin \K'$ so $|P_2R\cap \K'|\leq q$. It follows that $|P_2R\cap \K'|=q$ and $|P_2S\cap \K'|=0$.
 
 We see that for every point $R'$ in $\K$ on $P_1R$, the points of $R'P_2\setminus \{P_2\}$ belong to $\K'$, and for every point $S'\notin \K'$, there are no points of $\K'$ on the line $S'P_2$.
 It follows that $\K'$ consists of the point set of a set of lines through $P_2$, with $P_2$ removed.
Since $\K\cap \xi$ consists of the set $\K'$ with all or none of the points of $P_1P_2$ different from $P_1$ and $P_2$ added, the lemma is proved.
\end{proof}

\begin{corollary} \label{lem:sing-char} Let $\mathcal{K}$ be a set of points in $\PG(m,q)$ and let $P$ be a point of $\PG(m,q)$ such that all hyperplanes of $\PG(m,q)$ not containing $P$ contain a fixed number $s$ of points of $\mathcal{K}$. Then $\mathcal{K}\cup \{P\}$ consists of the $qs+1$ points of a cone with vertex $P$ and base a set of $s$ points.
\end{corollary}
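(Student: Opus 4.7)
The plan is to prove the corollary by a single double-counting identity applied to each line through $P$, bypassing any direct appeal to Lemma~\ref{lem:conelike}.

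First, I would establish the cardinality $|\K\setminus\{P\}|=qs$ by counting incident pairs $(X,H)$ with $X\in\K\setminus\{P\}$ and $H$ a hyperplane containing $X$ but not $P$: each such $X$ lies in $q^{m-1}$ such hyperplanes, and the total number of hyperplanes avoiding $P$ is $q^m$, each containing $s$ points of $\K$ by hypothesis. This gives $|\K|=qs+[P\in\K]$, where the Iverson bracket $[P\in\K]$ equals $1$ if $P\in\K$ and $0$ otherwise.

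The central step fixes a line $L$ through $P$, picks an arbitrary $X\in L\setminus\{P\}$, and computes $\sum_{H\ni X,\, P\notin H}|H\cap\K|$ in two ways. By hypothesis this sum equals $q^{m-1}s$. Counting from the point side, the contribution of $X$ itself is $q^{m-1}$; any $Y\in\K\cap L\setminus\{X,P\}$ contributes zero, since a hyperplane containing two distinct points of $L$ contains all of $L$ and in particular $P$; and any $Y\in\K\setminus L$ contributes $q^{m-2}$, since the line $XY$ avoids $P$, so among the $\frac{q^{m-1}-1}{q-1}$ hyperplanes through $XY$ exactly $\frac{q^{m-2}-1}{q-1}$ also contain $P$ (those through the plane $\langle X,Y,P\rangle$), leaving $q^{m-2}$ that miss $P$. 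Equating the two counts and solving gives
\[
|L\cap\K|\;=\;q\,[X\in\K]\,+\,[P\in\K].
\]

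Since the left-hand side is independent of the choice of $X\in L\setminus\{P\}$, the indicator $[X\in\K]$ is the same for every such $X$: either $L\setminus\{P\}\subseteq\K$ or $L\cap\K\subseteq\{P\}$, which is exactly the cone condition on $L$. Because $L$ was an arbitrary line through $P$, the set $\K\cup\{P\}$ is a cone with vertex $P$; its base has $s$ points, read off by intersecting with any hyperplane missing $P$, giving $qs+1$ points in total. The only mildly delicate point is verifying the $q^{m-2}$ count for $Y\notin L$, which uses that the plane $\langle X,Y,P\rangle$ is genuinely two-dimensional; this is automatic because $Y\notin L=\langle X,P\rangle$. No induction or case analysis on the ``vertex'' in Lemma~\ref{lem:conelike} is needed.
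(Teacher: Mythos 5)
Your proof is correct, and it takes a genuinely different route from the paper's. The paper derives the corollary in one line from the two-point Lemma~\ref{lem:conelike} (applied with $P_1=P$ and $P_2=Q$ for varying $Q$), which in turn rests on an inductive reduction to planes and a case analysis over which of the two distinguished points serves as the vertex in each plane; one must also then argue that varying $Q$ forces all the lines through $P$, and quietly handle the lemma's hypothesis that $P_2\notin\K$. Your argument instead exploits the corollary's \emph{stronger} hypothesis (a constant count on \emph{all} hyperplanes missing $P$, not just those missing two points) to run a single clean double count per line through $P$: the identity $|L\cap\K|=q\,[X\in\K]+[P\in\K]$, valid for every $X\in L\setminus\{P\}$, immediately forces each such line to be either fully in or essentially disjoint from $\K$. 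I checked the two counts: $|\K\setminus\{P\}|=qs$ follows from $q^m$ hyperplanes avoiding $P$ and $q^{m-1}$ of them through any fixed $X\neq P$, and the coefficient $q^{m-2}$ for $Y\notin L$ is right since $\langle X,Y,P\rangle$ is a genuine plane. What your approach buys is a self-contained, more elementary proof with no vertex ambiguity to resolve; what it does not do is replace Lemma~\ref{lem:conelike}, which is still needed elsewhere (e.g.\ in Proposition~\ref{prop:conelike}, where only the two-point hypothesis is available).
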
 
\begin{proof} If $Q$ is an arbitary point of $\PG(m,q)$, then applying Lemma \ref{lem:conelike} to $P_1=P$ and $P_2=Q$ shows that $\K$ is a union of lines through $P$ and $Q$ (without $P,Q$). Since this holds for every choice of $Q$, we find that $\K$ consist of lines through  $P$ only.
\end{proof}

Recall that there are two different intersection sizes for hyperplanes with quasi-polar spaces of elliptic, hyperbolic of Hermitian type. In what follows, we will use the following convention for these sizes in $\PG(m,q)$: we let $A_{m-1}$ and $B_{m-1}$ denote the sizes of their intersection with hyperplanes, where we assume that $A_{m-1}<B_{m-1}$. More precisely, if the quasi-polar space $\P$ in $\PG(m,q)$ is
\begin{itemize}
\item elliptic, then $A_{m-1}=|P\Q^-(m-2,q)|$ and $B_{m-1}=|\Q(m-1,q)|$,
\item hyperbolic, then $A_{m-1}=|\Q(m-1,q)|$ and $B_{m-1}=|P\Q^+(m-2,q)|$,
\item quasi-Hermitian with $m$ even, then $A_{m-1}=|P\H(m-2,q)|$ and $B_{m-1}=|\H(m-1,q)|$,
\item quasi-Hermitian with $m$ odd, then $A_{m-1}=|\H(m-1,q)|$ and $B_{m-1}=|P\H(m-2,q)|$.
\end{itemize}
It can be checked that in all cases $B_{m-1}-A_{m-1}=q(B_{m-3}-A_{m-3})$.

\begin{proposition} \label{prop:switchispivot} Let $\P$ be a polar space of elliptic, hyperbolic or Hermitian type in $\PG(m,q)$, $q$ arbitary, or of parabolic type in $\PG(m,q)$, $q$ odd, $m\geq 3$.
Suppose that $\P'$ is a quasi-polar space with $|\P|=|\P'|,$ obtained by switching in a singular hyperplane $\pi$ of $\P$ with vertex $P$. Then $\P'$ is the point set of a cone with vertex $P$ over a quasi-polar space of the same type as $\mathcal{P}$. 
\end{proposition}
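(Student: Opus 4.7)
The plan is to show that $\P'$ is a pivot of $\P$ at $P$: that is, $\P'\cap\pi=P\mathcal{D}$ for some quasi-polar space $\mathcal{D}$ of the same type as $\C$, where $\pi\cap\P=P\C$. I would first use Corollary \ref{lem:sing-char} inside $\pi$ to recognise $\P'\cap\pi$ as a cone with vertex $P$, and then verify that its base is a quasi-polar space.

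For any hyperplane $H\neq\pi$ of $\PG(m,q)$ set $\nu:=H\cap\pi$. Since $\P$ and $\P'$ coincide outside $\pi$,
\[
|H\cap\P'|-|H\cap\P|=|\nu\cap\P'|-|\nu\cap\P|=:\delta_\nu.
\]
The hypothesis $|\P|=|\P'|$ together with Lemma \ref{lem:typepreserved} forces $\P$ and $\P'$ to have the same type, since the type-swap exceptions in that lemma involve sets of different sizes. The key step is to show $\delta_\nu=0$ for every hyperplane $\nu$ of $\pi$ not through $P$. By Lemma \ref{lem:classical2}, the $q$ hyperplanes of $\PG(m,q)$ through such a $\nu$ different from $\pi$ include at least one of each type: singular and non-singular in the elliptic, hyperbolic and Hermitian cases, and singular, elliptic non-singular and hyperbolic non-singular in the parabolic odd case. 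Since both $|H\cap\P|$ and $|H\cap\P'|=|H\cap\P|+\delta_\nu$ must belong to the common set of allowed intersection sizes of quasi-polar spaces of this type, and since consecutive allowed sizes differ by a nonzero multiple of $q$, requiring these constraints simultaneously for the different types of $H$ through $\nu$ forces $\delta_\nu=0$.

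Now choose $\C$ inside some $(m-2)$-subspace $\mu$ of $\pi$ disjoint from $P$; projection from $P$ gives a bijection between $\C$ and $\nu\cap P\C$ for every hyperplane $\nu$ of $\pi$ not through $P$, so $|\nu\cap P\C|=|\C|$. Combined with the previous paragraph, every hyperplane of $\pi$ not through $P$ meets $\P'\cap\pi$ in exactly $|\C|$ points. Corollary \ref{lem:sing-char} applied inside $\pi$ then yields that $(\P'\cap\pi)\cup\{P\}$ is a cone $P\mathcal{D}$ with vertex $P$ over some set $\mathcal{D}\subseteq\mu$ with $|\mathcal{D}|=|\C|$. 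Counting, $|P\mathcal{D}|=q|\C|+1=|P\C|=|\pi\cap\P|=|\pi\cap\P'|$, forcing $P\in\P'$ and $\P'\cap\pi=P\mathcal{D}$. It remains to verify that $\mathcal{D}$ is a quasi-polar space of the same type as $\C$: for any hyperplane $H'$ of $\mu$ and any hyperplane $H\neq\pi$ of $\PG(m,q)$ through $\langle P,H'\rangle$, Lemma \ref{lem:classical2} says $|H\cap\P|$ depends only on the type of $H'\cap\C$, and a direct count gives
\[
|H\cap\P'|-|H\cap\P|=q\bigl(|H'\cap\mathcal{D}|-|H'\cap\C|\bigr).
\]
The gaps between the allowed intersection sizes for $\P$ are exactly $q$ times those for $\C$, so the validity of both $|H\cap\P|$ and $|H\cap\P'|$ forces $|H'\cap\mathcal{D}|$ to be a valid intersection size for a quasi-polar space of the type of $\C$. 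Hence $\mathcal{D}$ is such a space, so $\P'$ is the pivot of $\P$ at $P$ with new base $\mathcal{D}$.

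The main obstacle is the parabolic-odd case, where three distinct intersection sizes are involved instead of two, so both key arguments — forcing $\delta_\nu=0$ for $\nu\not\ni P$ and identifying $|H'\cap\mathcal{D}|$ as a valid $\C$-size — require tracking a three-way intersection of allowed shifts rather than a two-way one. In each instance this reduces to the identities $h-s=q^{n-1}=s-e$ between the hyperbolic, singular and elliptic hyperplane intersection sizes of a parabolic quadric in $\PG(2n,q)$.
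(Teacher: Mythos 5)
Your proof is correct and follows essentially the same route as the paper: Lemma \ref{lem:classical2} forces hyperplanes of $\pi$ off $P$ to meet $\P$ and $\P'$ equally, Corollary \ref{lem:sing-char} then gives the cone structure, and the gap identity $B_{m-1}-A_{m-1}=q(B_{m-3}-A_{m-3})$ identifies the base as a quasi-polar space of the right type. The only cosmetic differences are your explicit appeal to Lemma \ref{lem:typepreserved} for type preservation and your packaging of the computation via $\delta_\nu$ and the base $\mathcal{D}$ rather than via hyperplane sections of $\pi$ through $P$.
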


\begin{proof}  Let $\mu$ be a hyperplane of $\pi$, not through $P$. Then $\mu$ meets the cone $\P\cap \pi$ in a non-singular polar space of the same type as $\P$. 
If $\P$ is of elliptic, hyperbolic of Hermitian type, then there are two types of hyperplanes. By Lemma \ref{lem:classical2}, $\mu$ lies on hyperplanes, different from $\pi$, of both types. Since $\P$ and $\P'$ coincide outside $\pi$, and $\P$ and $\P'$ need to have the same intersection sizes with hyperplanes, it follows that $|\mu\cap\P'|=|\mu\cap \P|$. Since this holds for all hyperplanes $\mu$ not through $\pi$, and $|\mu\cap \P|$ is independent of the choice of $\mu$, we find by
Corollary \ref{lem:sing-char} that $\P'$ is indeed the point set of a cone with vertex $P$. Note that $P$ needs to be included in $\P'$ since $|\P\cap \pi|=|\P'\cap \pi|=1\pmod{q}$ and the set of points of a cone, without the vertex $P$ has size $0\pmod{q}$. 

Suppose first that $\P$ is of elliptic, hyperbolic of Hermitian type. Since we already know that $\P'$ is the point set of a cone with vertex $\P$, in order to prove the proposition, we only need to show that for all hyperplanes $\nu$ of $\pi$ through $P$, $|\mu\cap \P'|\in \{qA_{m-3}+1,qB_{m-3}+1\}$.

Consider a hyperplane $H$ of $\PG(m,q)$, different from $\pi$, through $\nu$ and suppose that $|\nu\cap\P|=qA_{m-3}+1$. By Lemma \ref{lem:classical2}, $|H\cap \P|=A_{m-1}$ and it follows that $|H\cap \P'|=|(H\setminus \pi)\cap \P'|+|\mu\cap \P'|=|(H\setminus \pi)\cap \P|+|\mu\cap \P'|=A_{m-1}-(qA_{m-3}+1)+|\mu\cap \P'|$. Since $|H\cap \P'|\in \{A_{m-1},B_{m-1}\}$, it follows that either 
$|\mu\cap \P'|=qA_{m-3}+1$ or $|\mu\cap \P'|=B_{m-1}-A_{m-1}+qA_{m-3}+1=qB_{m-3}+1$.
If $|H\cap \P|=B_{m-1}$, we obtain the same conclusion, using again that $B_{m-1}-A_{m-1}=q(B_{m-3}-A_{m-3})$.

Now assume that $\P$ is a non-singular quadric $\Q(2n,q)$, $q$ odd. Let $\nu$ be a hyperplane of $\pi$ through $P$ and let $2n=m$, $A_{m-1}=|\Q^-(m-1,q)|$, $B_{m-1}=|P\Q(m-2,q)|$, $C_{m-1}=|\Q^+(2n-1,q)$. Since we already know that $\P'$ is the point set of a cone with vertex $\P$, and $|\P|=|\P'|$, in order to prove the proposition, we only need to show that for all hyperplanes $\nu$ of $\pi$ through $P$, $|\mu\cap \P'|\in \{qA_{m-3}+1,qB_{m-3}+1,qC_{m-3}+1\}$.

Consider a hyperplane $H$ of $\PG(m,q)$, different from $\pi$, through $\nu$ and suppose that $|\nu\cap\P|=qA_{m-3}+1$. By Lemma \ref{lem:classical2}, $|H\cap \P|=A_{m-1}$ and it follows that $|H\cap \P'|=|(H\setminus \pi)\cap \P'|+|\mu\cap \P'|=|(H\setminus \pi)\cap \P|+|\mu\cap \P'|=A_{m-1}-(qA_{m-3}+1)+|\mu\cap \P'|$. Since $|H\cap \P'|\in \{A_{m-1},B_{m-1},C_{m-1}\}$, it follows that either 
$|\mu\cap \P'|=qA_{m-3}+1$ or $|\mu\cap \P'|=B_{m-1}-A_{m-1}+qA_{m-3}+1=qB_{m-3}+1$, or $|\mu\cap \P'|=C_{m-1}-A_{m-1}+qA_{m-3}+1=qC_{m-3}+1$.
Similarly, if $|\nu\cap\P|=qB_{m-3}+1$, then $|\mu\cap \P'|=qB_{m-3}+1$ or $|\mu\cap \P'|=A_{m-1}-B_{m-1}+qB_{m-3}+1=qA_{m-3}+1$, or $|\mu\cap \P'|=C_{m-1}-B_{m-1}+qB_{m-3}+1=qC_{m-3}+1$. Finally,  if $|\nu\cap\P|=qB_{m-3}+1$, then $|\mu\cap \P'|=qB_{m-3}+1$ or $|\mu\cap \P'|=A_{m-1}-B_{m-1}+qB_{m-3}+1=qA_{m-3}+1$, or $|\mu\cap \P'|=C_{m-1}-B_{m-1}+qB_{m-3}+1=qC_{m-3}+1$.
\end{proof}

Proposition \ref{prop:switchispivot} does not treat the case $\Q(2n,q)$, $q$ even. For parabolic quadrics in even characteristic, it is no longer true that pivoting in a singular hyperplane $\pi$ can only be done by replacing $\P\cap \pi$ with a cone over a quasi-quadric. 
For $\Q(4,q)$, we will be able to describe exactly what other possibilities we have for $\P'\cap \pi$ (see Proposition \ref{prop:Q4q}) whereas for $\Q(2n,q)$, $n>2$, $q$ even, we find a strong restriction on the set $\P'\cap \pi$ but no full classification. We provide an example of switching in a singular hyperplane which is not pivoting in $\Q(2n,q)$, $q$ even, in Example \ref{ex:switchnotpivot}.
 
In the following propositions, we use the terminology {\em truncated} line through a point $P$ to denote the set of points on that line, except the point $P$. Similarly, a {\em truncated cone} with vertex a point $P$ and base a point set $\C$ consists of the point set of the cone $P\C$ where the point $P$ is removed.
\begin{proposition}\label{prop:conelike}
Let $\P=\Q(2n,q)$, $q$ even, $n\geq 2$, and suppose that $\P'$ is a parabolic quasi-quadric with $|\Q(2n,q)|$ points, obtained by switching in a singular hyperplane $\pi$ of $\P$ with vertex $P$. 
Then $\P'\cap \pi$ is the point set of $\frac{q^{2n-2}-1}{q-1}$ truncated lines through $P$ or $N$, to which either all points of the line $PN$ or exactly one of the points $P$ and $N$ is added.
The only points lying on more than one line are possibly $P$ and $N$.
Furthermore, every hyperplane through $P$ or $N$ meets $\P'$ in  $|P\Q^-(2n-3,q)|,|\Q(2n-2,q)|$, or $|P\Q^+(2n-3,q)|$ points. 

Conversely, the point set of any set of $\frac{q^{2n-2}-1}{q-1}$ truncated lines through $P$ or $N$, to which either all points of the line $PN$ or exactly one of the points $P$ and $N$ is added, which satisfies the property that every hyperplane through $P$ or $N$ meets $\P'$ in  $|P\Q^-(2n-3,q)|,|\Q(2n-2,q)|$, or $|P\Q^+(2n-3,q)|$ points, gives rise to a parabolic quasi-quadric.
\end{proposition}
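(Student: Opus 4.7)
The approach is to mirror Proposition~\ref{prop:switchispivot} while accounting for the two families of non-singular hyperplanes (elliptic and hyperbolic) that exist through the relevant codimension-two subspaces when $q$ is even. The plan is: first, determine $|\mu\cap\P'|$ for hyperplanes $\mu$ of $\pi$ avoiding both $P$ and $N$; then apply Lemma~\ref{lem:conelike} with $(P_1,P_2)=(P,N)$ to obtain the conelike structure; then use a modulo-$q$ argument on $|\P'\cap\pi|$ to extract the two configurations; and finally verify the intersection condition on hyperplanes through $P$ or $N$, together with the converse.

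Since $N$ is the nucleus of $\Q(2n,q)$, a hyperplane of $\PG(2n,q)$ is tangent if and only if it contains $N$; hence if a hyperplane $\mu$ of $\pi$ avoids both $P$ and $N$, the unique tangent hyperplane of $\PG(2n,q)$ through $\mu$ is $\pi=\langle\mu,N\rangle$. The second bullet of Lemma~\ref{lem:classical2} then ensures that the remaining $q$ hyperplanes of $\PG(2n,q)$ through $\mu$ split as $q/2$ elliptic and $q/2$ hyperbolic. Writing $y_\pm=|\Q^\pm(2n-1,q)|$, $y_0=|P\Q(2n-2,q)|$, $c=|\Q(2n-2,q)|$, the identity $|H\cap\P'|-|H\cap\P|=|\mu\cap\P'|-|\mu\cap\P|$ combined with $|\mu\cap\P|=c$ and the requirement $|H\cap\P'|\in\{y_+,y_-,y_0\}$ for both a hyperbolic and an elliptic $H$ forces $|\mu\cap\P'|=c$. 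Applying Lemma~\ref{lem:conelike} to $\K=(\P'\cap\pi)\setminus\{P,N\}$ with $s=c$ then yields that $\K\setminus PN$ is a disjoint union of truncated lines through $P$ or $N$ (with the vertex depending on the plane through $PN$), and that $\K\cap PN$ is either empty or all $q-1$ interior points of $PN$.

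To select between the two configurations, let $\delta\in\{0,1\}$ indicate whether the interior of $PN$ lies in $\K$, and put $\epsilon=|\{P,N\}\cap\P'|$. From $|\P'\cap\pi|=(q^{2n-1}-1)/(q-1)\equiv 1\pmod q$ and $|\K|=\delta(q-1)+kq$, the congruence $\delta\equiv\epsilon-1\pmod q$ leaves only $(\delta,\epsilon)\in\{(0,1),(1,2)\}$ when $q\geq 4$: these are exactly the two cases of the statement. Treating $PN$ (truncated at whichever of $P,N$ is not separately added) as a truncated line in the case $(\delta,\epsilon)=(1,2)$ then gives $\tfrac{q^{2n-2}-1}{q-1}$ truncated lines in both configurations. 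That only $P$ and $N$ can lie on more than one line follows because two distinct planes through $PN$ intersect in $PN$ while truncated lines in those planes omit their vertices in $PN$. For the intersection condition on hyperplanes of $\pi$ through $P$, the first bullet of Lemma~\ref{lem:classical2} pins $|H\cap\P|$ to a single element of $\{y_+,y_-,y_0\}$ for the $q$ hyperplanes of $\PG(2n,q)$ through such a $\nu$ different from $\pi$; using the identities $y_\pm=y_0\pm q^{n-1}$ and $|P\Q^\pm(2n-3,q)|=c\pm q^{n-1}$, together with $|\nu\cap\P|\in\{|P\Q^\pm(2n-3,q)|,c\}$ and $|H\cap\P'|\in\{y_+,y_-,y_0\}$, forces $|\nu\cap\P'|\in\{|P\Q^-(2n-3,q)|,c,|P\Q^+(2n-3,q)|\}$. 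For $\nu\ni N$ but not through $P$, every hyperplane of $\PG(2n,q)$ through $\nu$ is tangent so $|H\cap\P|=y_0$, and the same identities yield the same three allowed values.

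For the converse, take any $\P'$ of the stated form satisfying the intersection hypothesis on hyperplanes of $\pi$ through $P$ or $N$. For a hyperplane $H\neq\pi$ of $\PG(2n,q)$, set $\mu=H\cap\pi$. If $\mu$ avoids $P$ and $N$, each of the $\tfrac{q^{2n-2}-1}{q-1}$ truncated lines meets $\mu$ in exactly one point (its underlying line meets $\mu$ in a unique point, and the vertex lies outside $\mu$), while the remaining contribution from $\P'\cap PN\cap\mu$ is either the single point $\mu\cap PN$ (when $PN\subset\P'$) or nothing, so $|\mu\cap\P'|=c$ and $|H\cap\P'|=|H\cap\P|\in\{y_+,y_-,y_0\}$. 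If $\mu$ contains $P$ or $N$, the hypothesis gives $|\mu\cap\P'|\in\{|P\Q^-(2n-3,q)|,c,|P\Q^+(2n-3,q)|\}$, and reversing the arithmetic of the forward direction yields $|H\cap\P'|\in\{y_+,y_-,y_0\}$. The main technical difficulty throughout is the bookkeeping in the case $(\delta,\epsilon)=(1,2)$: the line $PN$ must be counted as one of the $\tfrac{q^{2n-2}-1}{q-1}$ truncated lines (with the missing vertex added) rather than as a separate contribution, so that the count and the structural description match uniformly across both configurations.
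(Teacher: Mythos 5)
Your proof follows essentially the same route as the paper's: Lemma \ref{lem:classical2} forces $|\mu\cap\P'|=|\Q(2n-2,q)|$ for hyperplanes $\mu$ of $\pi$ avoiding $P$ and $N$, Lemma \ref{lem:conelike} applied to the pair $(P,N)$ gives the truncated-line structure, a count modulo $q$ selects the two configurations, and hyperplanes of $\pi$ through $P$ or $N$ are handled by the same arithmetic using the $q^{2n-2}$ points of $\P$ outside $\pi$, with the converse argued identically. The one caveat — that the congruence only excludes the spurious configuration $(\delta,\epsilon)=(1,0)$ when $q\geq 4$ — is shared with (and merely made explicit relative to) the paper's own one-line version of that step.
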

\begin{proof} Let $\mu$ be a hyperplane of $\pi$, not through $P$ or $N$. Then $\mu$ meets the cone $\P\cap \pi$ in a $\Q(2n-2,q)$, and by Lemma \ref{lem:classical2}, $\mu$ lies on at least one hyperbolic and one elliptic hyperplane. This implies that $|\mu\cap \P'|=|\mu\cap \P|=|\Q(2n-2,q)|$. It follows from Lemma \ref{lem:conelike} that $\P'$ is the set $\mathcal{L}$ of (truncated) lines through $P$ and $N$. Since $|\P'|=1\mod q$, we have that if $PN$ is not a line of this set, exactly one of $P$ and $N$ are in $\P'$, and there are exactly $\frac{q^{2n-2}-1}{q-1}$ lines in $\mathcal{L}$.

Now consider a hyperplane $\nu$ of $\pi$ through $N$. All hyperplanes through $\nu$ are singular, and hence, a hyperplane different from $\pi$ through $\nu$ has precisely $|P\Q(2n-2,q)|-|\Q(2n-2,q)|=q^{2n-2}$ points of $\P$, not in $\pi$. It follows that $\nu\cap \P'$ has $|P\Q^-(2n-3,q)|,|\Q(2n-2,q)|$, or $|P\Q^+(2n-3,q)|$ points.

A hyperplane $\xi$ of $\pi$ through $P$ contains $|P\Q^-(2n-3,q)|,|\Q(2n-2,q)|$, or $|P\Q^+(2n-3,q)|$ points of $\P$, and by Lemma \ref{lem:classical2}, it follows that, in all these cases, every hyperplane, different from $\pi$ through $\xi$ has the same number, namely $q^{2n-2}$, of points of $\P$ not in $\pi$. It again follows that $\nu\cap \P'$ has $|P\Q^-(2n-3,q)|,|\Q(2n-2,q)|$, or $|P\Q^+(2n-3,q)|$ points.

It is clear from the above reasoning that any set $\S$ of $|P\Q(2n-2,q)|$ points such that all hyperplanes not through $P$ or $N$ contain $|\Q(2n-2,q)|$ points of $\S$ and all hyperplanes through $P$ or $N$ contain $|P\Q^-(2n-3,q)|,|\Q(2n-2,q)|$, or $|P\Q^+(2n-3,q)|$ points of $\S$ satisfy the property that every hyperplane meets the set $\P\setminus (\P\cap \pi) \cup \S$ in $|\Q^-(2n-1,q)|,|P\Q(2n-2,q)|$, or $|\Q^+(2n-1,q)|$ points, and hence, gives rise to a parabolic quasi-quadric.
\end{proof}

\begin{corollary}  \label{prop:Q4q} Let $\P=\Q(4,q)$, $q$ even, and suppose that $\P'$ is a parabolic quasi-quadric with $|\Q(4,q)|$ points, obtained by switching in a singular hyperplane $\pi$ of $\P$ with vertex $P$. Then $\P'\cap \pi$ is one of the following: \begin{itemize}
\item a cone with vertex $P$ and base an oval;
\item a cone with vertex $N$ and base an oval;
\item the union of a truncated cone with vertex $P$ and base a $q$-arc together with one line through $N$ disjoint from the truncated cone, and different from $PN$;
\item the union of a truncated cone with vertex $N$ and base a $q$-arc together with one line through $P$ disjoint from the truncated cone, and different from $PN$,
\end{itemize}
and for all of the above possibilities, $\P'$ is indeed a parabolic quasi-quadric.
\end{corollary}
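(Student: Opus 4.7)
I would combine Proposition~\ref{prop:conelike} (applied with $n=2$) with a careful analysis of plane intersections inside $\pi$. Proposition~\ref{prop:conelike} gives that $\P'\cap\pi$ is the point set of $q+1$ truncated lines through $P$ or $N$, together with either all of $PN$ or exactly one of $P,N$, and that every plane of $\pi$ through $P$ or $N$ meets $\P'$ in $|P\Q^-(1,q)|=1$, $|\Q(2,q)|=q+1$, or $|P\Q^+(1,q)|=2q+1$ points. Write $a$ (resp.\ $b$) for the number of configuration lines through $P$ (resp.\ $N$) distinct from $PN$, and $\epsilon_P,\epsilon_N\in\{0,1\}$ for the indicators of $P,N\in\P'$.

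The central computation is: for a plane $\nu\subseteq\pi$ through $P$ but not $N$, each of the $b$ configuration lines through $N$ meets $\nu$ in exactly one point outside $\{P,N\}$, while each of the $a$ configuration lines through $P$ either lies in $\nu$ or meets $\nu$ only at $P$. Writing $k_P(\nu)$ for the number of configuration lines through $P$ contained in $\nu$, I obtain
\[
|\nu\cap\P'|\;=\;k_P(\nu)\cdot q+b+\epsilon_P\;\in\;\{1,\,q+1,\,2q+1\}.
\]
Since $b+\epsilon_P$ is constant and $k_P\ge 3$ forces the left side to exceed $2q+1$, a short case split gives $b+\epsilon_P\in\{1,q+1\}$, hence $(b,\epsilon_P)\in\{(0,1),(1,0),(q,1),(q+1,0)\}$; moreover $a\ge 2$ forces $k_P(\nu)=2$ for some $\nu$ (any two configuration lines through $P$ span a common plane), which further forces $b+\epsilon_P=1$. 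A symmetric analysis at $N$ constrains $(a,\epsilon_N)$. Combined with $a+b+[PN\text{ in the configuration}]=q+1$ and the total $|\P'\cap\pi|=q^2+q+1$, this leaves exactly four possibilities for $(a,b,\epsilon_P,\epsilon_N)$; the subcase where $PN$ is itself among the truncated lines collapses into the two pure-cone configurations by absorbing $PN$ into the $q+1$ lines at the corresponding vertex.

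In $(a,b,\epsilon_P,\epsilon_N)=(q+1,0,1,0)$, the bound $k_P(\nu)\le 2$ for every plane through $P$ translates, via $\pi/P\cong\PG(2,q)$, into the statement that the $q+1$ configuration lines through $P$ project to a $(q+1)$-arc, i.e.\ an oval; hence $\P'\cap\pi$ is a cone with vertex $P$ over an oval (Case~1). The symmetric configuration gives Case~2. In $(a,b,\epsilon_P,\epsilon_N)=(1,q,1,0)$, the same argument at $N$ shows the $q$ configuration lines through $N$ project to a $q$-arc, so form a truncated cone with vertex $N$ over a $q$-arc; the single line through $P$ differs from $PN$ by construction, and is disjoint from each truncated line through $N$, since otherwise a point distinct from $P,N$ would lie on two configuration lines, contradicting Proposition~\ref{prop:conelike}. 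This is Case~4, and Case~3 is symmetric.

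For the converse, I would verify that each of the four listed configurations satisfies the plane-intersection hypothesis of the converse part of Proposition~\ref{prop:conelike}: the oval or $q$-arc property bounds $k_P,k_N\le 2$, and the lone transverse line (in Cases~3 and 4) contributes $q+1$ points to the unique plane through the appropriate vertex that contains it, and a single point to every other plane through that vertex. The main obstacle is the case bookkeeping, in particular confirming that the subcase with $PN$ among the $q+1$ truncated lines reduces to Cases~1 and~2 rather than producing a genuinely new fifth configuration.
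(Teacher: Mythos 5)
Your plan is correct and follows essentially the same route as the paper: both start from Proposition~\ref{prop:conelike}, use the fact that planes of $\pi$ through $P$ or $N$ meet $\P'$ in $1$, $q+1$ or $2q+1$ points to rule out having two or more configuration lines through each of $P$ and $N$, and identify the base as an oval or $q$-arc by projecting from the vertex, with the converse being a routine verification. One small caution (present in the paper's own write-up as well): the plane spanned by two configuration lines through $P$ may contain $N$, so your formula $k_P(\nu)q+b+\epsilon_P$ does not apply verbatim to that plane --- but the conclusion $b+\epsilon_P=1$ when $a\ge 2$ already follows from your constraint $a+b+[PN\in\L]=q+1$, since $b+\epsilon_P=q+1$ would force $b\ge q$.
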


\begin{proof}

By Proposition \ref{prop:conelike}, we have that $\P'$ consists of the point set of $q+1$ truncated lines $\L$, to which either the points of the line $PN$, or exactly one of the points $P$ and $N$ are added.

Note that the roles of $P$ and $N$ are the same, so assume without loss of generality that $P\in \P'$. 

If all lines of $\L$ go through $P$ then we get a cone with vertex $P$. Let $\mu$ be a plane not through $P$ or $N$, then $\mu$ meets the lines of $\L$ in a set $\S$ of $q+1$ points. Since every plane through $P$ has at most $2q+1$ points of $\P'$, we have that every line of $\mu$ has at most $2$ points of $\S$; so $\S$ is an oval. 

If there is exactly one line, say $L$, through $P$, different from $PN$, contained in $\L$, then $\L$ is union of $L$ with a cone with vertex $N$. Since every plane through $N$ contains at most $2q+1$ points, the set $C$ meets every line of $\mu$ in at most $2$ points, and hence, $C$ is a $q$-arc.

Assume thus there are two lines, $L_1,L_2$ through 
$P$ contained in $\L$ but not all $q+1$ lines of $\L$ go through $P$. Then there is a line $L_3$ through $N$, different from $PN$, such that $L_3\setminus \{N\}$ is contained in $\P'$. But this implies that the plane $\langle L_1,L_2\rangle$ contains at least $2q+1+1$ points of $\P'$, namely those on $L_1,L_2$ together with $N\cap \langle L_1,L_2\rangle$, a contradiction.
By reversing the roles of $P$ and $N$, we see that $\P'$ is one of the four possibilities of the statement.

Finally, it is easy to see that all the above sets have the property that every plane not through $P$ or $N$ meets this set in $q+1$ points, while the planes through $P$ and $N$ meet in $1,q+1$ or $2q+1$ points. We will show that for all solids $H$, $|H\cap \P'|\in \{q^2+1,q^2+q+1,q^2+2q+1\}$.

It is clear that all solids $H$ not through $P$ or $N$ have $|H\cap \pi\cap \P|=|H\cap\pi\cap\P'|$ which implies $|H\cap\P|=|H\cap\P'|$.
By Lemma \ref{lem:classical2}, we see that all solids, different from $\pi$, though $P$ have $q^2$ points of $\P$, not in $\pi$. Likewise, all solids different from $\pi$ through $N$ have $q^2$ points of $\P$ outside $\pi$: all those solids are singular, and hence, meet $\P$ in $q^2+q+1$ points, and all planes through $N$ in $\pi$ have $q+1$ points of $\P$.
 Since $|\P'\cap\pi|\in \{1,q+1,2q+1\}$ we find that all solids through $P$ or $N$ meet $\P'$ in $q^2+1,q^2+q+1$ or $q^2+2q+1$ points as desired. Finally, we see that if $H=\pi$, then $|H\cap \P'|=q^2+q+1$.
\end{proof}

\begin{example} \label{ex:switchnotpivot}  Consider a hyperplane $\pi$ intersecting $\P=\Q(2n,q)$, $q$ even, $n\geq 2$, in a cone $P\Q(2n-2,q)$, where we take the base to be contained in a hyperplane $\mu$ of $\pi$, not through $P$. Let $\nu_P$ be an $(n-2)$-space contained in the base $\Q(2n-2,q)$, and let $\nu_N$ be an $(n-2)$-space of $\mu$, disjoint from $\Q(2n-2,q)\setminus \nu_P$. Note that $\nu_N$ always exists since $\nu_P$ is contained in a unique $(n-1)$-space which intersects $\Q(2n-2,q)$ exactly in $\nu_P$. To see this, consider the quotient with a hyperplane $\pi_H$ of $\nu_P$: there is a unique tangent line through the point $\nu_P/\pi_H$ to the conic $\Q(2n-2,q)/\pi_H$. 

Let $\S$ be the set $(P\Q(2n-2,q)\setminus P\nu_P)\cup P\nu_N$. Then it is easy to check that this set $\S$ satisfies the conditions of Proposition \ref{prop:conelike}: any hyperplane of $\pi$ not through $P$ and $N$ clearly has $|\Q(2n-2,q)|$ points of $\S$.  Any hyperplane of $\pi$ containing both $P\nu_P$ and $N\nu_N$ has the same intersection size with $\S$ as with $P\Q(2n-2,q)$, and hence, meets $\S$ in $|P\Q^-(2n-3,q)|,|\Q(2n-2,q)|$, or $|P\Q^+(2n-3,q)|$ points.
If a hyperplane $\Sigma$ contains $P\nu_P$ but not $N\nu_N$, it meets $N\nu_N$ in an $(n-2)$-space. Since $\Sigma$ contains $P\nu_P$, it meets $\P$ in $|\Q(2n-2,q)|$ or $|P\Q^+(2n-3,q)|$ points, so it meets in $\S$ in $|\Q(2n-2,q)|-|P\nu_P|+\frac{q^{n-1}-1}{q-1}=|P\Q^-(2n-3,q)|$ or $|P\Q^+(2n-3,q)|-|P\nu_P|+\frac{q^{n-1}-1}{q-1}=|\Q(2n-2,q)|$ points. Similarly, if $\Sigma$ does not contain $P\nu_P$ but does contain $N\nu_N$, it meets $\S$ in $|P\Q^-(2n-3,q)|-\frac{q^{n-1}-1}{q-1}+|N\nu_N|=|\Q(2n-2,q)|$ or $|\Q(2n-2,q)|-\frac{q^{n-1}-1}{q-1}+|N\nu_N|=|P\Q^+(2n-3,q)|$ points.

\end{example}

Parabolic quasi-quadrics in even characteristic will be looked at in more detail in Section \ref{PQQ}.

\subsection{Switching in quasi-quadrics}

\begin{theorem} \label{prop:switching-is-pivoting}
Let $\P$ be a quasi-polar space of elliptic, hyperbolic or Hermitian type in $\PG(m,q)$, $m\geq 3$, where $q$ is square in the case $\P$ is Hermitian, or $\P$ is a parabolic quasi-quadric in $\PG(2n,q)$, $n\geq 2$, of size $|\Q(2n,q)|$. Suppose that $\P$ contains a hyperplane $\pi$ such that $\pi\cap \P$ is a cone with vertex a point $P$ and base a quasi-polar space $\tilde{\P}$ of the same type as $\P$ in a hyperplane $\mu$ not through $P$ of $\pi$.
Let $\tilde{\P'}$ be a quasi-polar space of the type of $\P$ in $\mu$, then the set $\P'=(\P\setminus P\tilde{\P})\cup P\tilde{\P}'$ is a quasi-polar space of the type of $\P$.
\end{theorem}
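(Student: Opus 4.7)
Write $R := \P\setminus\pi$, so $\P = R\cup P\tilde{\P}$ and $\P' = R\cup P\tilde{\P}'$ coincide outside $\pi$, and note that Lemma \ref{lem:correct-points} (together with the parabolic-size hypothesis) gives $|\tilde{\P}|=|\tilde{\P}'|$. The plan is to verify that every hyperplane $H$ of $\PG(m,q)$ meets $\P'$ in a valid intersection size for a quasi-polar space of the type of $\P$, by splitting on whether $H=\pi$, $H\neq\pi$ with $P\notin H$, or $H\neq\pi$ with $P\in H$. When $H=\pi$, $|H\cap\P'|=q|\tilde{\P}'|+1=q|\tilde{\P}|+1=|\pi\cap\P|$, a valid singular hyperplane size. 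When $H\neq\pi$ and $P\notin H$, each generator $Py$ (for $y\in\tilde{\P}$) of the cone meets $H$ transversally in a single point, so $|H\cap P\tilde{\P}|=|\tilde{\P}|=|\tilde{\P}'|=|H\cap P\tilde{\P}'|$ and hence $|H\cap\P'|=|H\cap\P|$, which is valid.

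The main case is $H\neq\pi$ with $P\in H$. Here $H\cap\pi=P\nu$ where $\nu:=H\cap\mu$ is a hyperplane of $\mu$, and a direct cone computation yields
\[|H\cap\P'|=|H\cap\P|+q(|\tilde{\P}'\cap\nu|-|\tilde{\P}\cap\nu|).\]
Using the identity $B_{m-1}-A_{m-1}=q(B_{m-3}-A_{m-3})$ recorded in the paper (and its analogues among the pairs of the three parabolic sizes), the additive correction is a signed difference of valid intersection sizes of $\P$. The crucial subclaim is that \emph{$|H\cap\P|$ is determined by $|\tilde{\P}\cap\nu|$, exactly as Lemma \ref{lem:classical2} dictates for the classical polar space of the same type}. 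Granting this, a short case check on the pair $(|\tilde{\P}\cap\nu|,|\tilde{\P}'\cap\nu|)$ shows that the right-hand side above always lies in the set $\{A_{m-1},B_{m-1}\}$ (resp.\ $\{A_{m-1},B_{m-1},C_{m-1}\}$ for parabolic), finishing the verification for this case.

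To establish the subclaim I double-count over the $q+1$ hyperplanes $K$ of $\PG(m,q)$ through $P\nu$, obtaining
\[\sum_{K\supset P\nu}|K\cap\P|=q\,|P\nu\cap\P|+|\P|=q(q|\tilde{\P}\cap\nu|+1)+|\P|.\]
Because $|\P|$, $|\tilde{\P}|$ and $|\tilde{\P}\cap\nu|$ all take their classical values, the total equals the corresponding sum for the classical polar space of the same type, in which Lemma \ref{lem:classical2} forces the $q$ hyperplanes $K\neq\pi$ to share a common intersection size determined by $|\tilde{\P}\cap\nu|$. Each summand $|K\cap\P|$ lies between the minimum and maximum valid hyperplane sizes of $\P$, so equality of the total pins all $q$ of them to this common value. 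This immediately settles the two-size (elliptic, hyperbolic, Hermitian) cases, as well as the parabolic case when $|\tilde{\P}\cap\nu|$ takes its extreme values. The hardest remaining piece, and the main obstacle, is the parabolic case with $|\tilde{\P}\cap\nu|=B_{m-3}$, where $qB_{m-1}$ is a priori compatible with mixtures of equal numbers of $A_{m-1}$- and $C_{m-1}$-hyperplanes; I plan to rule these out either by an auxiliary count inside the codimension-two flat $P\nu$ (using that $P\nu\cap\P=P(\tilde{\P}\cap\nu)$ is itself a cone) or by feeding the lower-dimensional version of the theorem back by induction on $m$. Combining the three cases then yields $|H\cap\P'|$ valid for every $H$, so $\P'$ is a quasi-polar space of the same type as $\P$.
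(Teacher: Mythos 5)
Your overall strategy coincides with the paper's: split on the position of $H$ relative to $\pi$ and $P$, use the cone arithmetic $|H\cap \P'|=|H\cap\P|+q\left(|\tilde{\P}'\cap\nu|-|\tilde{\P}\cap\nu|\right)$, and reduce everything to the subclaim that $|H\cap\P|$ is forced by $|\tilde{\P}\cap\nu|$ exactly as Lemma \ref{lem:classical2} predicts for the classical model. Your double count $\sum_{K\supset P\nu}|K\cap\P|=q|P\nu\cap\P|+|\P|$ combined with the extremality of the relevant intersection size is a clean justification of that subclaim in the two-size cases and in the parabolic extreme cases; it is in fact more explicit than the paper's Case 1, which simply cites Lemma \ref{lem:classical2} even though $\P$ is only a quasi-polar space.

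The gap is the one you flag yourself: the parabolic sub-case $|\tilde{\P}\cap\nu|=B_{m-3}$, where the total $qB_{m-1}$ is consistent with equally many $A_{m-1}$- and $C_{m-1}$-hyperplanes (indeed $B_{m-1}-A_{m-1}=C_{m-1}-B_{m-1}=q^{n-1}$), and you only offer two candidate strategies without carrying either out. Neither looks viable as described: a count confined to the codimension-two flat $P\nu$ cannot decide the matter, because the distribution of the sizes $|K\cap\P|$ over the $q$ hyperplanes $K\neq\pi$ through $P\nu$ depends on how $\P$ sits outside $\pi$, which $P\nu\cap\P$ does not see; and induction on $m$ returns the switching statement in lower dimension, not the needed structural fact about hyperplane distributions. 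The paper closes this case by a global incidence count (Case 2 of the proof, in Appendix A): the number of singular hyperplanes of $\P$ is a constant equal to its classical value (Lemma \ref{nucleus2}(i)), as is the number of hyperplanes of $\pi$ meeting $\P\cap\pi$ in each of the sizes $qA_{m-3}+1$, $qB_{m-3}+1$, $qC_{m-3}+1$; the extreme cases you have already settled show that every singular hyperplane other than $\pi$ must meet $\pi$ in a hyperplane carrying $qB_{m-3}+1$ points of $\P\cap\pi$; comparing the resulting incidence totals with those of the classical cone configuration of Lemma \ref{lem:classical2} then forces all $q$ hyperplanes $\neq\pi$ over such a hyperplane of $\pi$ to be singular. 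An argument of this global type is needed; without it the central sub-case of the parabolic half of the theorem remains unproven.
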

\begin{proof}
{\bf Case 1:} Let $H$ be a hyperplane of $\PG(m,q)$. We need to show that $H\cap \P'\in \{A_{m-1},B_{m-1}\}$. Note that $\P\cap (H\setminus \pi)=\P'\cap (H\setminus \pi)$. 

If $H$ does not contain $P$, then $H\cap \pi$ has $A_{m-2}$ points of both $\P$ and $\P'$, so $|H\cap \P'|=|H\cap \P|\in \{A_{m-1},B_{m-1}\}$.
So suppose that $H$ contains $P$, then, since every hyperplane of $\mu$ meets the base $\tilde{\P}$ in either $A_{m-3}$ or $B_{m-3}$ points, $H\cap \pi$ meets in $\P$ in $qA_{m-3}+1$ or $qB_{m-3}+1$ points. Likewise, $H\cap \pi$ meets $\P'$ in $qA_{m-3}+1$ or $qB_{m-3}+1$ points.
If $|H\cap \pi\cap\P|=qA_{m-3}+1$, then $|H\cap P|=A_{m-1}$ by Lemma \ref{lem:classical2}. We find that $|H\cap \P'|$ is either $A_{m-1}$ or $A_{m-1}-(qA_{m-3}+1)+(qB_{m-3}+1)$. Since $qB_{m-3}-qA_{m-3}=B_{m-1}-A_{m-1}$, we indeed have that $|H\cap \P'|\in \{A_{m-1},B_{m-1}\}$. Similarly, if $|H\cap \pi\cap\P|=qB_{m-3}+1$, by Lemma \ref{lem:classical2}, $|H\cap \P|=B_{m-1}$, and it follows that $|H\cap \P'|$ is either $B_{m-1}$ or $B_{m-1}-qB_{m-3}+qA_{m-3}=A_{m-1}$.

{\bf Case 2:} $\P$ is a parabolic quasi-quadric in $\PG(2n,q)$ of size $|\Q(2n,q)|$. The proof is contained in Appendix A and is similar to that of Case 2, this time with three intersection numbers.
\end{proof}

\section{Repeated pivoting}\label{sec:repeated}

In \cite[page 19]{QD}, the authors write that ``one can repeat pivoting as much as one wants, implying that the family of quasi-quadrics is quite wild''. But since they define pivoting as cone replacement, it is not clear that one can repeat this procedure as much as one wants: after each pivot, in order to be able to pivot again, there still should be a hyperplane meeting the obtained quasi-quadric in a cone. In fact, \cite{QD} does not show how to pivot more than once. In Proposition \ref{repeatedpivoting} we give a construction enabling us to pivot $q+1$ times.

Recall that if $\perp$ denotes the polarity associated with a polar space $\P$ in $\PG(m,q)$ and $P$ is a point of $\P$, then $P^\perp\cap \P$ is a cone $P\C_P$, where $\C_P$ is a classical polar space of the same type as $\P$ in $\PG(m-2,q)$. 

While there is no orthogonal polarity associated to $\Q=\Q(2n,q)$, $q$ even, we still have that for every point $P\in \Q$, there is a unique hyperplane $\pi_P$ containing all points which are collinear with $P$ in $\Q$, and these form a cone $P\C$ where $\C$ is a non-singular parabolic quadric contained in a hyperplane of $\pi_P$. We will, by abuse of notation, denote this hyperplane $\pi_P$ by $P^\perp$. Note that the nucleus $N$ of $\Q$ is contained in all spaces $P^\perp$. 

Using this definition for hyperplanes $P^\perp$ defined for points on a parabolic quadric, $q$ even, the following now holds for all non-singular polar spaces $\P$ in $\PG(m,q)$: if $Q$ is a point collinear with $P$ in $\P$, then $P^\perp\cap Q^\perp$ is an $(m-2)$-dimensional space meeting $\P$ in a cone $L\tilde{\P}$ with vertex the line $L=PQ$ and base a non-singular polar space $\tilde{\P}$ of the same type as $\P$ contained in an $(m-4)$-dimensional subspace of $P^\perp\cap Q^\perp$, disjoint from $PQ$. The $q+1$ hyperplanes of $\PG(m,q)$ through $P^\perp\cap Q^\perp$ are precisely those of the form $S^\perp$ for a point $S$ on the line $PQ$.

The following lemma is a classical result, so we omit the proof, which is (hidden in) \cite{GGG}.

\begin{lemma}\label{lem:classical1}
Let $\P$ be a non-singular, non-symplectic polar space contained in $\PG(m,q)$ and let $\pi$ be a hyperplane of $\PG(m,q)$. Let $\mu$ be an $(m-3)$-dimensional subspace of $\pi$ such that $\P\cap\mu$ is a cone $P\C$, where $\C$ is a non-singular polar space of the same type as $\P$.

Then all but one hyperplane of $\pi$ through $\mu$ intersects $\P$ in a non-singular polar space of the same type as $\P$. The last hyperplane, $P^\perp$ meets $\P$ as follows:
\begin{itemize}
\item if $\P=\Q^\pm(2n+1,q)$, then $P^\perp\cap\P$ is a cone $P\Q(2n-2,q)$ if $\P\cap H=\Q(2n,q)$ and a cone $L\Q^\pm(2n-3,q)$ if $\P\cap H=P\Q^\pm(2n-1,q)$.
\item if $\P=\H(m,q)$, then $P^\perp\cap\P$ is a cone $P\H(m-3,q)$ if $\P\cap H=\H(m-1,q)$ and a cone $L\H(m-4,q)$ if $\P\cap H=P\H(m-2,q)$.
\item if $\P=\Q(2n,q)$, then $P^\perp\cap\P$ is a cone $P\Q^\pm(2n-3,q)$ if $\P\cap H=\Q^\pm(2n-1,q)$, and a cone $L\Q(2n-4,q)$ if $H\cap \P=P\Q(2n-2,q)$.
\end{itemize}
\end{lemma}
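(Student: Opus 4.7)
The plan is to exploit the polarity $\perp$ of $\P$ (and its surrogate for $\Q(2n,q)$, $q$ even, described just before the lemma) to decompose the defining form restricted to each hyperplane of $\pi$ through $\mu$. Write $\mu=\langle P,\mu_0\rangle$, where $\mu_0$ is a hyperplane of $\mu$ avoiding $P$ with $\mu_0\cap\P=\C$. Every point of $\C$ is collinear with $P$ in $\P$, so $\C\subseteq P^\perp$, and as $\C$ spans $\mu_0$ this forces $\mu_0\subseteq P^\perp$ and hence $\mu\subseteq P^\perp$.

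The $(m-2)$-dimensional subspaces of $\pi$ containing $\mu$ correspond to the $q+1$ points of the projective line $\pi/\mu$. Provided $\pi\neq P^\perp$, the subspace $\nu_0:=\pi\cap P^\perp$ is one of them, and by dimension it is the \emph{unique} hyperplane of $\pi$ through $\mu$ lying in $P^\perp$; each of the remaining $q$ hyperplanes $\nu_1,\ldots,\nu_q$ satisfies $\nu_i\cap P^\perp=\mu$. For a fixed $\nu=\nu_i$ with $i\geq 1$, pick $R\in\nu\setminus P^\perp$: the line $PR\subseteq\nu$ is then a secant of $\P$, meeting $\P$ in $P$ and a unique further point. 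Dimension counting gives $\dim(\mu_0^\perp\cap\nu)\geq 1$ while $\mu_0^\perp\cap\nu\cap P^\perp=\{P\}$, so this intersection contains a secant line $PR'$ with $R'\in\P\cap\mu_0^\perp$; one then has the orthogonal decomposition $\nu=\langle\mu_0,PR'\rangle$ with $\mu_0\perp PR'$ for the polar form. The restriction to $\mu_0$ is a non-degenerate form of the same type as $\P$ (it defines $\C$), and the restriction to $PR'$ is a hyperbolic plane; their orthogonal sum is non-degenerate of the same type as $\P$, two dimensions larger. Hence $\P\cap\nu$ is a non-singular polar space of the same type as $\P$, as required.

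For the remaining hyperplane $\nu_0$ one computes $\P\cap\nu_0=(P^\perp\cap\P)\cap\pi=P\tilde{\C}\cap\pi$, where $\tilde{\C}$ is a non-singular polar space of the same type as $\P$ lying in a hyperplane $\tau$ of $P^\perp$ not containing $P$. The intersection is a cone with vertex at least $P$ over the hyperplane section $\tilde{\C}\cap(\tau\cap\pi)$ of $\tilde{\C}$. Lemma~\ref{lem:classical2} together with the classical catalogue of hyperplane sections of a polar space identifies the base in each case: when $\pi$ is non-singular the base is a non-singular hyperplane section of $\tilde\C$ of the predicted type, producing the point-vertex cones $P\Q(2n-2,q)$, $P\H(m-3,q)$, or $P\Q^\pm(2n-3,q)$; when $\pi$ is singular with vertex $P'\in\P$, the collinearity of $P$ and $P'$ in $\P$ forces $P'\in P^\perp$, so $P'$ becomes an additional singular direction of the form on $\nu_0$, enlarging the vertex to the line $L=PP'$ and producing the line-vertex cones $L\Q^\pm(2n-3,q)$, $L\H(m-4,q)$, or $L\Q(2n-4,q)$. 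The main obstacle is this final case analysis: one must check in each of the six scenarios that the enlargement from a point to a line vertex occurs exactly when $\pi$ is singular, and in the parabolic even-characteristic case one must handle the nucleus-based surrogate polarity with care so that the orthogonal-decomposition skeleton survives. Steps~1--3 are essentially dimension counting plus the standard Witt-theoretic fact that attaching a hyperbolic plane preserves the type of a non-degenerate form.
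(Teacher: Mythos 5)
The paper does not actually prove this lemma: it is stated with the remark that it is classical and ``hidden in'' \cite{GGG}, so there is no in-paper argument to compare yours against. Judged on its own merits, your skeleton --- split the $q+1$ hyperplanes of $\pi$ through $\mu$ into the unique one inside $P^\perp$ and the $q$ others, and for the latter exhibit an orthogonal decomposition $\nu=\mu_0\perp\langle P,R'\rangle$ with $\langle P,R'\rangle$ a hyperbolic plane --- is a sound and standard route, and the verification that $\mu_0^\perp\cap\nu$ is a line meeting $P^\perp$ only in $P$ (hence a secant through $P$) is correct where a genuine polarity exists. Two smaller points: your derivation of $\mu\subseteq P^\perp$ from ``$\C$ spans $\mu_0$'' fails when $\C$ is empty or does not span (e.g.\ $\C=\Q^-(1,q)$); the cleaner argument is that the cone condition forces every line of $\mu$ through $P$ to be a tangent or a generator, whence $\mu\subseteq P^\perp$ unconditionally. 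Also, $\pi=P^\perp$ must be excluded (there the conclusion is vacuous since $\pi\cap P^\perp$ is not a hyperplane of $\pi$), which you flag but do not dispose of.

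The genuine gaps are these. First, for $\P=\Q(2n,q)$ with $q$ even --- one of the three bulleted cases --- there is no orthogonal polarity, $\mu_0^\perp$ is undefined, and the alternating form $b(x,y)=Q(x+y)-Q(x)-Q(y)$ on the underlying $(2n+1)$-dimensional space is degenerate with radical the nucleus, so your dimension counts for $\mu_0^\perp$ and the orthogonal-sum argument do not go through as written. You name this as ``the main obstacle'' but offer no fix; a proof must either work with the quadratic form and the nucleus explicitly, or project from the nucleus onto a symplectic geometry, and until that is done the parabolic bullet is unproved. Second, the treatment of the exceptional hyperplane $\nu_0=\pi\cap P^\perp$ is asserted rather than derived: writing $\P\cap\nu_0=P(\tilde\C\cap\tau\cap\pi)$ is fine, but the content of the lemma is precisely that $\tau\cap\pi$ is a non-tangent hyperplane of $\tilde\C$ exactly when $\pi$ is non-singular for $\P$, that in the singular case the radical of $\nu_0$ is \emph{exactly} the line $L=PP'$ (not larger), and that the residual base is non-singular of the stated type. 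This needs an explicit radical computation, e.g.\ $\nu_0\cap\nu_0^{\perp}$ with $\nu_0^{\perp}=\langle P,\pi^{\perp}\rangle$, split according to whether $\pi^{\perp}\in\P$; appealing to ``the classical catalogue'' and to Lemma~\ref{lem:classical2} (which counts hyperplanes through a subspace and does not describe the section $P^\perp\cap\pi\cap\P$) does not discharge it. With those two items supplied the proof would be complete.
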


The following proposition shows that we can construct a quasi-polar space by pivoting $q+1$ times. This construction can be thought of as repeated pivoting in the singular hyperplanes defined by each of the $q+1$ points on a fixed line. But since we are not modifying the intersection of any two of these singular hyperplanes, these $q+1$ pivots can be done simultaneously and their order does not matter.

\begin{proposition} \label{repeatedpivoting}Let $\P$ be a non-singular, non-symplectic polar space in $\PG(m,q)$. Let $P$ and $Q$ be two collinear points in $\P$ and let $\xi$ be the $(m-2)$-space $P^\perp\cap Q^\perp$.

For every point $R\in PQ$, consider a cone $R\C'_R$ contained in $R^\perp$, such that $R\C'_R\cap \xi=R^\perp \cap \P \cap \xi$, and such that $\C'_R$ is a polar space of same type as $\P$.
Then the set $\P'=\cup_{R\in PQ}R\C'_R$ is a quasi-polar space (of the same type as $\P$).
\end{proposition}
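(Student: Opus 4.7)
The plan is to deduce the proposition from Theorem \ref{prop:switching-is-pivoting} by performing the $q+1$ pivots iteratively, one per point of $PQ$. Order the points of $PQ$ as $R_0,\dots,R_q$, set $\P^{(0)}:=\P$, and for $0\le i\le q$ inductively define $\P^{(i+1)} := (\P^{(i)}\setminus R_i\C_{R_i})\cup R_i\C'_{R_i}$, where $R_i\C_{R_i}=R_i^\perp\cap\P$ denotes the original cone. I would prove by induction on $i$ the joint statement that $\P^{(i)}$ is a quasi-polar space of the same type as $\P$ and that $R_j^{\perp}\cap \P^{(i)}=R_j\C_{R_j}$ for every $j\ge i$. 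The second half ensures the pivot at step $i$ is a legitimate application of Theorem \ref{prop:switching-is-pivoting}, and the first half then follows from that theorem since $\C_{R_i}$ is a classical polar space and $\C'_{R_i}$ a polar space of the same type as $\P$.

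The key geometric observation driving the induction is that any two distinct hyperplanes among $\{R^\perp:R\in PQ\}$ meet in $P^\perp\cap Q^\perp=\xi$, so the pivot in $R_i^\perp$ modifies points only in $R_i^\perp\setminus\xi$, which is disjoint from $R_j^\perp$ for $j\ne i$. Combined with the hypothesis $R\C'_R\cap\xi=R^\perp\cap\P\cap\xi$, which says that no pivot alters the point set inside $\xi$, this gives $R_j^{\perp}\cap \P^{(i+1)}=R_j^{\perp}\cap \P^{(i)}$ for every $j\ne i$, and hence preserves the cone structure at the remaining $R_j$'s required for subsequent pivots. In the parabolic case the size is also preserved step by step since $|R_i\C_{R_i}|=|R_i\C'_{R_i}|$, keeping us within the hypothesis of Theorem \ref{prop:switching-is-pivoting}.

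It remains to verify $\P^{(q+1)}=\P'$. The $q+1$ hyperplanes $R^\perp$ form a pencil with centre $\xi$, so every point $X\notin\xi$ lies in exactly one of them, say $R_j^\perp$; then only the $j$-th pivot affects $X$, and $X\in\P^{(q+1)}$ iff $X\in R_j\C'_{R_j}$, which coincides with membership in $\bigcup_R R\C'_R$ since $X$ lies in no other $R_k^\perp$. For $X\in\xi$, no pivot alters membership, and the hypothesis $R\C'_R\cap\xi=\P\cap\xi$ supplies the same equivalence. The main obstacle is essentially the bookkeeping of the induction, ensuring at each stage that the hypotheses of Theorem \ref{prop:switching-is-pivoting} remain satisfied; once the pencil structure $R_j^\perp\cap R_k^\perp=\xi$ is noted, the $q+1$ pivots commute and none interferes with another.
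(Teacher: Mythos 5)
Your proposal is correct, but it takes a genuinely different route from the paper. The paper proves the statement directly: for an arbitrary hyperplane $H$ it computes $|H\cap \P'|$ by decomposing over the pencil of hyperplanes $S^\perp$, $S\in PQ$, through $\xi$, and splits into three cases ($H\supseteq \xi$; $H$ meeting $PQ$ in one point; $H\cap\xi$ a hyperplane of $\xi$ containing $PQ$), using Lemmas \ref{lem:classical1} and \ref{lem:classical2} to pin down the relevant intersection numbers. You instead perform the $q+1$ pivots sequentially and invoke Theorem \ref{prop:switching-is-pivoting} at each step; the pencil structure $R_j^\perp\cap R_k^\perp=\xi$ together with the hypothesis $R\C'_R\cap\xi=R^\perp\cap\P\cap\xi$ is exactly what keeps the remaining cones $R_j^\perp\cap\P^{(i)}$ intact, and your verification that $\P^{(q+1)}=\bigcup_R R\C'_R$ is sound. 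This formalises the informal remark preceding the proposition that the $q+1$ pivots ``can be done simultaneously and their order does not matter.'' Two trade-offs are worth noting. First, your argument leans on Theorem \ref{prop:switching-is-pivoting} being valid for \emph{quasi}-polar spaces rather than only classical ones, since the intermediate sets $\P^{(i)}$, $i\geq 1$, are no longer classical; the theorem is indeed stated at that level of generality, so the citation is legitimate, but the paper's direct proof only ever manipulates the classical $\P$ and its perp-structure and is therefore independent of that theorem. Second, you should say a word about the base hyperplane: Theorem \ref{prop:switching-is-pivoting} asks for the old base $\tilde{\P}$ and the new base $\tilde{\P'}$ to sit in a common hyperplane $\mu$ of $\pi=R_i^\perp$ not through the vertex, whereas $\C_{R_i}$ and $\C'_{R_i}$ may a priori live in different $(m-2)$-spaces; this is harmless because a cone's base may be taken in any hyperplane avoiding the vertex (the intersection is a projection of the base, hence again a polar space of the same type), but the remark belongs in a complete write-up. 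With those two points made explicit, your proof is a valid and arguably cleaner alternative to the one in the paper.
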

\begin{proof} 

Let $H$ be a hyperplane of $\PG(m,q)$. We need to show that $|H\cap \P'|$ is one of the intersection numbers of hyperplanes with respect to $\P$. We distinguish $3$ cases, according to whether $H$ contains $\xi$, $H$ meets $\xi$ in a hyperplane containing the line $PQ$, or $H$ meets $\xi$ in a hyperplane not containing the line  $PQ$.

{\bf Case 1:} $H$ contains $\xi$.

In this case, we have seen that $H=S^\perp$ for some $S\in PQ$, so $|H\cap \P'|=|S\C'_S|=|S\C_S|=|H\cap \P|$.

{\bf Case 2:} $H$ does not contain the line $PQ$.
Let $R$ be the unique intersection point of $PQ$ with $H$. Note that  $$|H\cap \P'|=\sum_{S\in PQ}|(H\cap (S^\perp\setminus \xi))\cap \P'|+|(H\cap\xi)\cap \P'|.$$

By our construction, $\xi\cap \P=\xi\cap\P'$, and hence, $|(H\cap\xi)\cap \P'|=|(H\cap\xi)\cap \P|$.
We also see that $|(H\cap S^\perp)\cap\P'|=|(H\cap S^\perp)\cap\P|$ for all points $S\neq R$ in $PQ$: the hyperplane $S^\perp$ meets both $\P$ and $\P'$ in a cone with vertex $S$ and base a polar space $\C_S$ and $\C'_S$ of the same type as $\P$. Since $H\cap S^\perp$ is a hyperplane of $S^\perp$, not through the vertex $S$, it indeed meets both cones in $|\C_S|=|\C'_S|$ points. Since, $\xi\cap \P=\xi\cap\P'$, we have that $|H\cap (S^\perp\setminus \xi))\cap \P'|=|(H\cap (S^\perp\setminus \xi))\cap \P|$.

Now consider the intersection $H\cap R^\perp\cap\P'$.
We know that $R^\perp\cap \P$ and $R^\perp\cap \P'$ are cones with vertex $R$ and base a polar space $\tilde{\P}$, $\tilde{\P'}$, resp. of the same type as $\P$ in a hyperplane $\sigma$ of $R^\perp$. 
Suppose first that $\P$ is of elliptic, hyperbolic or Hermitian type, then there are two intersection numbers of hyperplanes with respect to $\P$. As before, let these be denoted by $A_{m-1}$ and $B_{m-1}$ with $A_{m-1}<B_{m-1}$. Every hyperplane of $\sigma$ then meets the base $\tilde{\P}$ in either $A_{m-3}$ or $B_{m-3}$ points and since $R\in H$, $(H\cap R^\perp)\cap \P$ then has two intersection sizes, namely $qA_{m-3}+1$ and $qB_{m-3}+1$.
If $|H\cap \P|=A_{m-1}$, then by Lemma \ref{lem:classical1}, $H\cap R^\perp$ meets the cone $R\tilde{\P}$ in $qA_{m-3}+1$ points. Hence, if $|H\cap R^\perp\cap \P|\neq |H\cap R^\perp\cap \P'|$, then $|H\cap R^\perp\cap \P'|=qB_{m-3}+1$. It follows that $|H\cap \P'|=|H\cap \P|+qB_{m-3}-qA_{m-3}$. Since $qB_{m-3}-qA_{m-3}=B_{m-1}-A_{m-1}$, it follows that $|H\cap \P'|=B_{m-1}$. Similarly, if $|H\cap \P|=B_{m-1}$ and $|H\cap \P'|\neq B_{m-1}$, then $|H\cap \P'|=A_{m-1}$.

If $\P$ is of parabolic type, then $\P=\Q(2n,q)$ and all hyperplanes meet in $A_{2n-1}=|\Q^-(2n-1,q)|$, $B_{2n-1}=|P\Q(2n-2,q)|$, or $C_{2n-1}=|\Q^+(2n-1,q)|$ points. As above, we know that $R^\perp\cap \P$ and $R^\perp\cap \P'$ are cones with vertex $R$ and base a polar space $\Q(2n-2)$ in a hyperplane $\sigma$ of $R^\perp$. 
We see that every hyperplane of $\sigma$ then meets the base $\tilde{\P}$ in either $A_{2n-3}$, $B_{2n-3}$, or $C_{2n-3}$ points and since $R\in H$, $(H\cap R^\perp)\cap \P$ then has three intersection sizes, namely $qA_{2n-3}+1$, $qB_{2n-3}+1$ and $qC_{2n-3}+1$.
If $|H\cap \P|=A_{2n-1}$, then by Lemma \ref{lem:classical1}, $H\cap R^\perp$ meets the cone $R\tilde{P}$ in $qA_{2n-3}+1$ points. Hence, if $|H\cap R^\perp\cap \P|\neq |H\cap R^\perp\cap \P'|$, then $|H\cap R^\perp\cap \P'|\in \{qB_{2n-3}+1,qC_{2n-3}+1\}$. It follows that $|H\cap \P'|=|H\cap \P|+qB_{2n-3}-qA_{2n-3}$ or $|H\cap \P'|=|H\cap \P|+qC_{2n-3}-qA_{2n-3}$. Since $qC_{2n-3}-qA_{2n-3}=C_{2n-1}-A_{2n-1}$, it follows that $|H\cap \P'|=C_{2n-1}$. The cases where $|H\cap \P|=B_{2n-1}$ and $|H\cap \P|=C_{2n-1}$ follow by an analogous reasoning. We conclude that $|H\cap \P'|\in \{A_{2n-1},B_{2n-1},C_{2n-1}\}$.

{\bf Case 3:} $H\cap \xi$ is a hyperplane of $\xi$ containing $PQ$.
Recall that $$|H\cap \P'|=\sum_{S\in PQ}|(H\cap (S^\perp\setminus \xi))\cap \P'|+|(H\cap\xi)\cap \P'|.$$ We claim that for each $R\in PQ$, $|(H\cap(R^\perp\setminus \xi))\cap \P'|=|(H\cap(R^\perp\setminus \xi))\cap \P|$.
We have that $R^\perp$ meets both $\P$ and $\P'$ in a cone with vertex $R$ and base a polar space $\C_R$ and $\C'_R$ of the same type as $\P$. These bases, $\C_R$ and $\C'_R$ are contained in a hyperplane $\sigma$ of $R^\perp$ which meets $\xi$ in a hyperplane $\mu$ of $\sigma$, not through $R$. Let $T=PQ\cap \mu$. By our assumption that $R\C'_R\cap \xi=R^\perp \cap \P \cap \xi$, we have that $\mu\cap \C_R=\mu\cap \C'_R$. Moreover, since $\mu$ is contained in $\xi=P^\perp\cap Q^\perp$, $\mu\subset T^\perp$, and hence, $\mu\cap \C_R$ is a cone with vertex $T$ and base a polar space of the same type as $\P$.

The hyperplane $H$ meets $\mu$ in an $(m-4)$-space $\nu$ of $\mu$ through the vertex $T$. Since $H$ contains the vertex $R$ of the cone $R\C_R$, showing our claim that $|(H\cap(R^\perp\setminus \xi))\cap \P'|=|(H\cap(R^\perp\setminus \xi))\cap \P|$ is equivalent to showing that the hyperplane $H\cap\sigma$ of $\sigma$, which meets $\xi$ in the $(m-4)$-space $\nu$, meets $\C_R$ and $\C'_R$ in the same number of points. Recall that $\xi\cap \P=\xi\cap \P'$, and hence, $\nu\cap \P=\nu\cap \P'$.
The $(m-4)$-space $\nu$ lies on one singular $(m-3)$-space in $\sigma$, namely $\mu$, all other $(m-3)$-spaces through $\nu$ in $\sigma$ are of the same type for both $\P$ and $\P'$ by Lemma \ref{lem:classical2}. This proves our claim and we conclude that $|H\cap \P|= |H\cap \P'|$.
\end{proof}

\begin{remark} Not all quasi-quadrics can be obtained by applying repeated switching to a quadric. For example, it follows from Lemma \ref{prop:switchispivot} that switching in $\Q^-(3,q)$ is not possible (since a singular hyperplane of $\Q^-(3,q)$ consists of a cone with vertex $P$ and base the empty set). This shows that Suzuki-Tits ovoids cannot be obtained from an elliptic quadric $\Q^-(3,q)$ by repeated switching.
\end{remark}

\section{Parabolic quasi-quadrics in even characteristic}\label{PQQ}

\subsection{Definition of De Clerck-Hamilton-O'Keefe-Penttila} \label{sec:FDC}
Recall that we have defined a parabolic quasi-quadric in $\PG(2n,q)$ as a set $\S$ of points such that hyperplanes intersect $\S$ in $|\Q^+(2n-1,q)$, $|\Q^-(2n-1,q)|$ or $|P\Q(2n-2,q)|$ points.
Parabolic quasi-quadrics have three different intersection numbers with respect to hyperplanes, and as such, behave differently from the other quasi-polar spaces. In particular, the size of a parabolic quasi-quadric does not follow from the definition (unlike in the other cases, see Lemma \ref{lem:correct-points} and Remark \ref{rem:sizeparabolic}). Parabolic quadrics in $\PG(2n,q)$, $q$ even, always have a {\em nucleus}. This is a point $N$, not contained in the quadric, such that all lines through $N$ intersect the parabolic quadric in exactly one point (see \cite[Corollary 1.8 (i)]{GGG}).

In \cite{QQ}, when introducing parabolic quasi-quadrics for $q$ even, the authors explicitely ask for the size to be that of a parabolic quadric and for the existence of a point that acts as a nucleus.

More precisely, they define a {\em parabolic quasi-quadric with nucleus $N$} in $\PG(2n,q)$, $q$ even, to be a set $\S$ of points such that 

\begin{itemize}
\item[(a)] $|S| = \frac{q^{2n}-1}{q-1}$;
\item[(b)] Every hyperplane not through $N$ intersects $\S$ in $|\Q^-(2n-1,q)|$ or $|\Q^+(2n-1,q)|$ points;
\item[(c)] Every line through $N$ contains exactly one point of $\S$.
\end{itemize}
We see that $N\notin S$. Note that (a) follows immediately from (c). 

Consider a parabolic quasi-quadric as defined by us: this is a set of points $\S$ in $\PG(2n,q)$ such that
\begin{itemize}
\item[(b')] every hyperplane intersects $\S$ in $|\Q^-(2n-1,q)|$, $|\Q^+(2n-1,q)|$ or $|P\Q(2n-2,q)|$ points.
\end{itemize}

It is easy to see that a parabolic quasi-quadric with nucleus $N$ as defined in \cite{QQ} satisfies our definition of a parabolic quasi-quadric: 
\begin{lemma} \label{triviaal}A set $\S$ with nucleus $N$ satisfying (b)-(c) satisfies (b').
\end{lemma}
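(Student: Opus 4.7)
The plan is to verify (b') for every hyperplane of $\PG(2n,q)$. Condition (b) already gives the right intersection size for every hyperplane that avoids $N$, so the only thing left to check is that a hyperplane $\pi$ through $N$ meets $\S$ in exactly $|P\Q(2n-2,q)|$ points.

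To do this, I would fix such a $\pi$ and exploit (c) together with a simple incidence bijection. A line $L$ through $N$ is either contained in $\pi$ (in which case it lies in $\pi$ entirely) or meets $\pi$ only at $N$. By (c), each line through $N$ carries exactly one point of $\S$, and that point is distinct from $N$ since $N\notin\S$. Hence a point $x\in \S\cap\pi$ determines (and is determined by) the unique line $Nx$ through $N$ contained in $\pi$, which yields a bijection between $\S\cap\pi$ and the set of lines through $N$ contained in $\pi$. The latter set is in bijection with the points of the quotient $\pi/N\cong \PG(2n-2,q)$, so
\[
|\S\cap\pi| \;=\; \frac{q^{2n-1}-1}{q-1}.
\]

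Finally I would invoke the standard cone count $|P\Q(2n-2,q)| = 1 + q\,|\Q(2n-2,q)|$ together with $|\Q(2n-2,q)| = \frac{q^{2n-2}-1}{q-1}$, which simplifies to $\frac{q^{2n-1}-1}{q-1}$ and thus matches the intersection number computed above. Combined with (b), this establishes (b').

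There is no real obstacle: the content of the lemma reduces to the one-to-one correspondence between points of $\S\cap\pi$ and lines through $N$ in $\pi$, which is immediate from (c), and to a routine cone-size identity. The only point requiring a moment of care is the observation that lines through $N$ not lying in $\pi$ contribute no point to $\S\cap\pi$, so nothing is double-counted or missed.
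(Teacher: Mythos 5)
Your proof is correct and matches the paper's argument, which likewise observes that by (c) the points of $\S$ in a hyperplane through $N$ correspond bijectively to the lines through a point of $\PG(2n-1,q)$, giving $\frac{q^{2n-1}-1}{q-1}=|P\Q(2n-2,q)|$ points. You have merely spelled out the bijection and the cone-size identity in more detail than the paper's one-line proof.
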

\begin{proof} It follows from $(c)$ that the number of points of $\S$ in a hyperplane through $N$ is the number of lines through a point in $\PG(2n-1,q)$, which is precisely $|P\Q(2n-2,q)|$. 
\end{proof}

\subsection{Properties of the nucleus}\label{sec:nucleus-properties}

The singular hyperplanes of a parabolic quadric are precisely those hyperplanes through the nucleus (see e.g. \cite[Lemma 1.13(ii)]{GGG}). Note that some authors use this as the definition for a parabolic quadric: they define the nucleus as the intersection point of all singular hyperplanes.

In order to investigate the situation for quasi-quadrics, we will consider the following properties, where it is clear that (d') implies (d).
\begin{itemize}
\item[(d)] Hyperplanes meeting $\S$ in $|P\Q(2n-2,q)|$ points contain a common point $N$;
\item[(d')] Hyperplanes meeting $\S$ in $|P\Q(2n-2,q)|$ points contain a common point $N$ and all hyperplanes through $N$ meet $\S$ in $|P\Q(2n-2,q)|$ points.
\end{itemize}

\begin{lemma} \label{nucleus2}\begin{itemize}
\item[(i)] Suppose that $\S$ and $N$ satisfy (a) and (b'), then the number of singular hyperplanes is $\frac{q^{2n}-1}{q-1}$.
\item[(ii)] Suppose that $\S$ and $N$ satisfy (a) and (b') and (d), then $\S$ satisfies (a)-(b)-(c) where $N$ is the nucleus.
\item[(iii)] Suppose that $\S$ and $N$ satisfy (a) and (b') and (c), then $\S$ satisfies (a)-(b)-(d').
\end{itemize}
\end{lemma}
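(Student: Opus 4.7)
The plan is to prove the three parts in the stated order, with part (i) doing the real counting and parts (ii) and (iii) reducing to short chains of implications that use (i).

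For (i), I would use standard double counting. Let $\alpha$, $\beta$, $\gamma$ denote the numbers of hyperplanes meeting $\S$ in $|\Q^-(2n-1,q)|$, $|\Q^+(2n-1,q)|$ and $|P\Q(2n-2,q)|$ points respectively. Counting hyperplanes, then incident pairs (point of $\S$, hyperplane through it), and then triples (unordered pair of points of $\S$, hyperplane through both) yields a $3\times 3$ linear system in $\alpha,\beta,\gamma$ whose right-hand side depends only on $|\S|$, which is fixed by (a). The coefficient matrix has rows $(1,1,1)$, $(a,b,c)$, $(a(a-1),b(b-1),c(c-1))$, where $a,b,c$ denote the three distinct intersection numbers; subtracting the second row from the third converts it to a Vandermonde matrix on $a,b,c$, so the system has a unique solution. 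Since the classical quadric $\Q(2n,q)$ itself satisfies (a) and (b'), its hyperplane counts must be that unique solution; in particular $\gamma$ equals the number of singular hyperplanes of $\Q(2n,q)$, which is $\frac{q^{2n}-1}{q-1}$ in both parities (either the number of points of $\Q(2n,q)$ for $q$ odd, or the number of hyperplanes through the nucleus for $q$ even).

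For (ii), I would first upgrade (d) to (d'). By (d) all $\gamma=\frac{q^{2n}-1}{q-1}$ singular hyperplanes contain $N$, and that is exactly the number of hyperplanes through any point, so every hyperplane through $N$ is singular, which is (d'). For (c), fix a line $L$ through $N$; each of the $\frac{q^{2n-1}-1}{q-1}$ hyperplanes through $L$ contains $N$ and is thus singular by (d'). Double-counting pairs $(P,H)$ with $P\in\S$ and $H\supset L\cup\{P\}$ splits the sum according to whether $P$ lies on $L$ (contributing $\frac{q^{2n-1}-1}{q-1}$ hyperplanes) or off $L$ (contributing $\frac{q^{2n-2}-1}{q-1}$ hyperplanes), and together with $|\S|=\frac{q^{2n}-1}{q-1}$ this reduces to a single linear equation in $|L\cap\S|$ which simplifies to $|L\cap\S|=1$. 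If $N\in\S$, this would force $\S=\{N\}$, contradicting (a), so $N\notin\S$ and (c) holds. Finally, (b) is immediate: a hyperplane not through $N$ is not singular by (d), so it meets $\S$ in $|\Q^-(2n-1,q)|$ or $|\Q^+(2n-1,q)|$ points.

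For (iii), the argument runs essentially in reverse. Given (c) (and noting $N\notin\S$, else (a) fails), any hyperplane $H$ through $N$ is partitioned away from $N$ by the $\frac{q^{2n-1}-1}{q-1}$ lines through $N$ in $H$, each contributing exactly one point of $\S$, so $|H\cap\S|=\frac{q^{2n-1}-1}{q-1}=|P\Q(2n-2,q)|$; hence every hyperplane through $N$ is singular. By part (i) there are exactly $\frac{q^{2n}-1}{q-1}$ singular hyperplanes, matching the number of hyperplanes through $N$, so the singular hyperplanes are precisely those through $N$. This gives (d'), and (b) follows as in (ii). The main obstacle is the bookkeeping in (i), namely verifying the three incidence identities cleanly and checking that the coefficient matrix reduces to a Vandermonde; once (i) is in place, the remainder is essentially matching the cardinality of the set of singular hyperplanes against the cardinality of the set of hyperplanes through $N$, together with the single line-incidence count in the proof of (c).
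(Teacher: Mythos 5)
Your proposal is correct and follows the same overall architecture as the paper's proof: part (i) by the standard three-equation hyperplane count, and parts (ii) and (iii) by matching the $\frac{q^{2n}-1}{q-1}$ singular hyperplanes against the $\frac{q^{2n}-1}{q-1}$ hyperplanes through $N$. There are two small divergences worth noting. In (i) the paper solves the linear system explicitly for the number of singular hyperplanes, whereas you observe that the system has a unique solution (the coefficient matrix reduces to a Vandermonde matrix on the three distinct intersection numbers) and read the answer off the classical model $\Q(2n,q)$; both work, and yours trades the algebra for the need to know the singular-hyperplane count of the classical quadric in both parities. In (ii) the paper derives (c) by a descending induction showing that every codimension-$j$ space through $N$ carries exactly $\frac{q^{2n-j}-1}{q-1}$ points of $\S$, and then specialises to lines, while you obtain $|L\cap\S|=1$ directly from a single double count of pairs $(P,H)$ with $H\supset L$, all such $H$ being singular once (d) has been upgraded to (d'); your computation checks out and is a little shorter, though the paper's induction yields the stronger intermediate statement about all subspaces through $N$. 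You also make explicit the small point that $N\notin\S$ (else (a) fails), which the paper leaves implicit in parts (ii) and (iii).
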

\begin{proof}

\begin{itemize}\item[(i)]
Let $\alpha_1$ be the number of hyperplanes meeting $\S$ in $u_1=|\Q^-(2n-1,q)|$ points, $\alpha_2$ be the number of hyperplanes meeting $\S$ in $u_2=|\Q^+(2n-1,q)|$ points and $\alpha_3$ be the number of hyperplanes meeting $\S$ in $u_3=|P\Q(2n-2,q)|$ points (i.e. the number of singular hyperplanes). Standard counting yields that
\begin{align}
\alpha_1+\alpha_2+\alpha_3&=\frac{q^{2n+1}-1}{q-1}\\
\alpha_1 u_1+\alpha_2 u_2 +\alpha_3u_3&=\frac{q^{2n}-1}{q-1}\frac{q^{2n}-1}{q-1}\\
\alpha_1 u_1 (u_1-1)+\alpha_2u_2 (u_2-1)+\alpha_3 u_3(u_3-1)&=\frac{q^{2n}-1}{q-1}(\frac{q^{2n}-1}{q-1}-1)\frac{q^{2n-1}-1}{q-1}
\end{align}

Solving this system of equations, we find that $\alpha_3$, the number of singular hyperplanes is $\frac{q^{2n}-1}{q-1}$. 

\item[(ii)] Assume now that, on top of (a)-(b') also (d) holds, that is, that all singular hyperplanes contain a common point $N$. Since $\frac{q^{2n}-1}{q-1}$, the number of singular hyperplanes found in (i) is precisely the number of hyperplanes though $N$, (b') implies that all hyperplanes not through $N$ meet $\S$ in $|\Q^-(2n-1,q)|$ or $|\Q^+(2n-1,q)|$ points. Hence (b) follows. 

In order to show (c) we will show by induction that a codimension $j$-space through $N$ contains $\frac{q^{2n-j}-1}{q-1}$ points of $\S$. For $j=1$, this follows from the fact that $N$ is the intersection point of all singular hyperplanes, and singular hyperplanes contain $\frac{q^{2n-1}-1}{q-1}$ points of $\S$. Suppose the statement holds for all codimension $j_0$-spaces such that $1\leq j_0\leq 2n-2$ and consider a codimension $(j_0+1)$-space $\pi$. There are $\frac{q^{j_0+1}-1}{q-1}$ codimension $j_0$-spaces through $\pi$, each containing $\frac{q^{2n-j_0}-1}{q-1}-x$ points of $\S$ where $x$ is the number of points of $\S$ in $\pi$. Since $$(\frac{q^{2n-j_0}-1}{q-1}-x)(\frac{q^{j_0+1}-1}{q-1})+x=|S|=\frac{q^{2n}-1}{q-1}$$
we obtain that $x=\frac{q^{2n-(j_0+1)}-1}{q-1}$. The statement follows by induction, and taking $j_0=2n-1$.

\item[(iii)]
Now assume that (a),(b'),(c) are satisfied. By (c), all hyperplanes through $N$ are singular, and (i) shows that the number of singular hyperplanes is precisely the number of hyperplanes through $N$, so (b) follows. It also shows that the singular hyperplanes are precisely those hyperplanes going through $N$, so (d') follows.
\end{itemize}
\end{proof}

From Lemma \ref{nucleus2}(iii) we immediately obtain:

\begin{corollary}\label{nucleuswelldefined} For a parabolic quasi-quadric in $\PG(2n,q)$ of size $\Q(2n,q)$, $q$ even, the notions of nucleus as point only lying on tangent lines and as point lying on only singular hyperplanes coincide.
\end{corollary}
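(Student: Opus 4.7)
The corollary asserts that, for a parabolic quasi-quadric $\S$ of size $|\Q(2n,q)|$ in $\PG(2n,q)$ with $q$ even, a point $N$ satisfies property (c) (every line through $N$ is tangent to $\S$) if and only if every hyperplane through $N$ is singular (meets $\S$ in $|P\Q(2n-2,q)|$ points). Since $\S$ is a parabolic quasi-quadric of the stipulated cardinality, properties (a) and (b') are automatic, so all three parts of Lemma \ref{nucleus2} are available.

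The forward direction is immediate from Lemma \ref{nucleus2}(iii): if (c) holds for the point $N$, then (d') holds for that same $N$, which explicitly states that every hyperplane through $N$ is singular.

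For the converse, suppose every hyperplane through $N$ is singular. Lemma \ref{nucleus2}(i) supplies the total number of singular hyperplanes as $\frac{q^{2n}-1}{q-1}$, which is exactly the number of hyperplanes of $\PG(2n,q)$ through any fixed point. Consequently the set of singular hyperplanes coincides with the set of hyperplanes through $N$; in particular all singular hyperplanes share the common point $N$, so property (d) holds with $N$ as the common point. Invoking Lemma \ref{nucleus2}(ii) now yields property (c), completing the equivalence.

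The proof is essentially bookkeeping: the substantive work sits in Lemma \ref{nucleus2}, and the only nontrivial ingredient is the numerical coincidence between the count of singular hyperplanes and the count of hyperplanes through a point, which holds precisely because $|\S|=|\Q(2n,q)|$. This is also where the hypothesis on the cardinality enters in an essential way.
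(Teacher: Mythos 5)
Your proof is correct and follows essentially the same route as the paper, which derives the corollary directly from Lemma \ref{nucleus2}. In fact your write-up is more complete: the paper only cites part (iii), which covers the direction from ``only tangent lines'' to ``only singular hyperplanes'', whereas you also make the converse explicit by combining the hyperplane count of part (i) with part (ii) — exactly the bookkeeping the paper leaves implicit.
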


Consider now a set $\S$ satisfying (a) and (b') for $n=1$. This is a set of $q+1$ points in $\PG(2,q)$ such that every line meets it in $0,1$ or $2$ points, i.e. an oval. It is well-known (and an easy exercise) that for $q$ even, there is a unique point $N$ such that all tangent lines to the oval go through a common point $N$, the nucleus. In other words, when $n=1$ and $q$ is even, the properties (a) and (b') imply the existence of a point $N$ such that (c) holds. 

Proposition \ref{nonucleus} below shows that the case $n=1$ is exceptional: parabolic quasi-quadrics in $\PG(2n,q)$, $n>1$, do not necessarily have a nucleus. 

\subsection{Pivoting for parabolic quasi-quadrics} \label{sec:pivoting-PQQ}
In the following proposition, we show that we can pivot in a parabolic quadric and obtain a quasi-quadric without nucleus.
\begin{proposition} \label{nonucleus} 
A parabolic quasi-quadric in $\PG(2n,q)$, $q$ even, $n>1$, with $|\Q(2n,q)|$ points, does not necessarily have a nucleus.
\end{proposition}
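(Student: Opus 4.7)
The plan is to exhibit, for each $n \geq 2$ and $q$ even, an explicit parabolic quasi-quadric $\P'$ of size $|\Q(2n,q)|$ in $\PG(2n,q)$ that contains the nucleus $N$ of a reference quadric $\Q=\Q(2n,q)$, and then to rule out any nucleus of $\P'$ by a polar-form argument. Concretely, pick $P\in\Q$, set $\pi=P^\perp$, and choose the base hyperplane $\mu$ of $\pi$ (not through $P$) to pass through $N$, so that $N$ is also the nucleus of the parabolic base $\C=\mu\cap\Q\cong\Q(2n-2,q)$. In Example~\ref{ex:switchnotpivot} I take $\nu_N$ to be an $(n-2)$-subspace of the tangent $(n-1)$-space $T$ to $\C$ along $\nu_P$ with $N\in\nu_N$; such a $\nu_N$ exists because $T$, being the intersection of the tangent hyperplanes at the points of $\nu_P$, contains $N$, and $\dim T=n-1>n-2=\dim\nu_N$. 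By Example~\ref{ex:switchnotpivot} combined with Proposition~\ref{prop:conelike}, the set $\P'=(\Q\setminus\pi)\cup(P\C\setminus P\nu_P)\cup P\nu_N$ is a parabolic quasi-quadric of size $|\Q(2n,q)|$, and $N\in\nu_N\subset P\nu_N\subset\P'$ by construction.

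Suppose for a contradiction that $\P'$ has a nucleus $N^*$. Since $\pi$ is singular for $\P'$ and the nucleus lies on every singular hyperplane (Lemma~\ref{nucleus2}(iii), property (d')), we have $N^*\in\pi$. The possibility $N^*\in\Q$ is ruled out by an isotropic-line argument: for any $R\in\Q\cap\pi$ with $R\neq P$, the base quadric encoding isotropic lines through $R$ in $\Q$ is not entirely contained in the hyperplane image of $P^\perp$, so some isotropic line through $R$ in $\Q$ is not in $\pi$, and would contribute at least $q$ points to $\P'\setminus\pi$, contradicting the nucleus condition for $q\geq 2$; the remaining case $N^*=P$ is immediate since $P\in P\nu_N\subset\P'$. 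Hence $N^*\in\pi\setminus\Q$. For every line $\ell$ through $N^*$ not contained in $\pi$, one has $\ell\cap\pi=\{N^*\}\notin\P'$, so $\ell\cap\P'=\ell\cap(\Q\setminus\pi)=\ell\cap\Q$, and the nucleus condition forces $|\ell\cap\Q|=1$.

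Parameterize $\ell=\langle N^*,Z\rangle$ by $(s,t)\mapsto[sN^*+tZ]$ with $Z\notin\pi$. The restriction of $Q$ to $\ell$ is the homogeneous quadratic $s^2Q(N^*)+stB(N^*,Z)+t^2Q(Z)$, where $B$ is the associated alternating bilinear form and $Q(N^*)\neq 0$; a short case-split in characteristic $2$ on whether $Q(Z)$ vanishes shows that this has exactly one zero in $\PG(1,q)$ if and only if $B(N^*,Z)=0$. Requiring $B(N^*,Z)=0$ for all $Z\notin\pi$ then forces $B(N^*,\cdot)\equiv 0$ on the whole ambient vector space: for any $Z_0\in\pi$ and a fixed $Y\notin\pi$, $Y+Z_0\notin\pi$, so linearity gives $B(N^*,Z_0)=B(N^*,Y+Z_0)-B(N^*,Y)=0$. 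Hence $N^*$ lies in the radical of $B$, which in characteristic $2$ is spanned by $N$, giving $N^*=N$ and contradicting $N\in\P'$. The main technical obstacle is the char-$2$ root count in the polar-form analysis: the degenerate case $Q(Z)=B(N^*,Z)=0$, where $Z$ itself lies on the quadric, must be treated separately to confirm the clean equivalence between having exactly one root and the vanishing of $B(N^*,Z)$.
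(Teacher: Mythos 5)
Your proof is correct, but it takes a genuinely different route from the paper's. The paper's example is a pivot: it replaces the base $\C=\Q(2n-2,q)$ of the cone $\pi\cap\Q$ (whose nucleus is $N$) by another parabolic quadric $\C'$ in the same hyperplane $\mu$ with nucleus $N'\neq N$, and then rules out every candidate nucleus in $\pi$ combinatorially --- $N$ lies on a $2$-secant to $\C'$ by the choice of $\C'$, points of $\Q\cap\pi$ lie on lines of $\Q$ leaving $\pi$, and every point of $\pi\setminus(\Q\cup\{N\})$ lies on a $2$-secant to $\Q\setminus\pi$ by a double count of $2$-secants using the three point orbits of the stabiliser of $\Q$. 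You instead take the non-pivoting switch of Example \ref{ex:switchnotpivot}, arranged so that $N\in\nu_N\subseteq\P'$ (legitimate, since the tangent $(n-1)$-space along the generator $\nu_P$ contains $N$ and meets $\C$ only in $\nu_P$), and you replace the $2$-secant count by an algebraic argument: a candidate nucleus $N^*\in\pi\setminus\Q$ would force every line through $N^*$ not in $\pi$ to be tangent to $\Q$, hence $B(N^*,\cdot)\equiv 0$, hence $N^*=N$ because the radical of the polarisation of a nondegenerate parabolic form in characteristic $2$ is spanned by the nucleus --- and $N\in\P'$ kills that last candidate. Your version is shorter and identifies $N$ as the unique surviving candidate in one stroke, at the cost of leaning on the coordinate description of $\Q$ and on the verification in Example \ref{ex:switchnotpivot} together with the converse direction of Proposition \ref{prop:conelike}; the paper's count is longer but stays purely synthetic, and that $2$-secant computation is reused verbatim in the proof of Corollary \ref{cor:parabolic}, which your argument would not supply.
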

\begin{proof}

Consider the parabolic quadric $\Q=\Q(2n,q)$ in $\PG(2n,q)$, $q$ even, $n\geq 2$. Then $\Q$ has a nucleus $N$. Let $\pi$ be a singular hyperplane of $\Q$, then $\pi$ contains $N$ and $\pi$ meets $\Q$ in the points of a cone $P\mathcal{C}$ where $\mathcal{C} = \Q(2n-2,q)$. Without loss of generality we can take the base $\mathcal{C}$ in an $(2n-2)$-space $\mu$ through $N$. We then see that the nucleus $N$ of $\Q$ is also the nucleus of $\mathcal{C}$. Now consider a parabolic quadric $\mathcal{C}'$ in $\mu$ with nucleus $N'\neq N$. We claim that the set $\Q'=(\Q\setminus P\mathcal{C})\cup P\mathcal{C}'$ obtained by pivoting in $\Q$ is a parabolic quasi-quadric without nucleus. The hyperplane $\pi$ is a singular hyperplane of $\Q'$ so it follows from Corollary \ref{nucleuswelldefined} that if $\Q'$ has a nucleus $N'$, then $N'$ is contained in $\pi$.

We will show that every point of $\pi$, not in $\P'$, lies on at least one line which is not a tangent line to $\P'$. First note that the point $N$ lies on a $2$-secant to $\mathcal{C}'$ by our construction, so $N$ does not only lie on tangent lines to $\P'$. Furthermore, a point of $\pi$ in $\Q$ which does not lie in $\P'$ is necessarily different from $P$, and hence, lies on lines of $\Q$ which are not contained in $\pi$. Since $\P$ and $\Q$ coincide outside $\pi$, these lines meet $\P'$ in $q\geq 2$ points, and hence, are not tangent lines.

It remains to show that a point of $\pi$, not in $\Q$, and different from $N$, lies on at least one $2$-secant to $\P'$. We will count the number of $2$-secants to $\Q$ and subtract the number of $2$-secants to $\Q$ in $\pi$. Since this is a positive integer and the points of $\Q$ not in $\pi$ are points of $\P'$, the statement then follows.

The stabiliser of $\Q$ in $\PG(2n,q)$ has three different point orbits: the first orbit consists of the points of $\Q$, the second of the nucleus, and the third of all other points (see e.g. \cite[Theorem 1.49]{GGG}). Thus the number $X$ of $2$-secants through a fixed point $\neq N$, of $\PG(2n,q)$, not in $\Q$, is a constant. 

The number of $2$-secants to $\Q$ is $\frac{|\Q|(|\Q|-|P\Q(2n-2,q)|)}{2}$ since $P$ is collinear with $|P\Q(2n-2,q)|$ points in $\Q$.

Now count couples $(R, L)$, where $R$ is a point, not on $\Q$ and different from $N$, and $L$ is a $2$-secant to $\Q$ through $R$.
$$(|\PG(2n,q)|-1-|\Q|)X=\frac{|\Q|(|\Q|-|P\Q(2n-2,q)|)}{2}.(q-1).$$

It follows that $X=\frac{q^{2n-1}}{2}$. 

To find the number of $2$-secants through a point $R$ different from $N$, not in $\Q$, of $\pi$ to the cone $P\Q(2n-2,q)$ in $\pi$, consider a hyperplane $\nu$ of $\pi$ through $R$ but not through $P$. The hyperplane $\nu$ meets $P\Q(2n-2,q)$ in a non-singular parabolic quadric $\tilde{\Q}$ in $\mu$ which implies by the previous count that $R$ lies on $\frac{q^{n-3}}{2}$ $2$-secants to $\tilde{Q}$. Now every $2$-secant $M$ through $R$ in $\pi$ gives rise to a unique $2$-secant $L$ through $R$ in $\nu$, namely $\langle P,M\rangle\cap \mu$. We see that every $2$-secant through $R$ in $\nu$ is determined $q$ times, namely, by every of the $q$ lines through $R$ in the plane $\langle L,P\rangle$, different from $RP$. It follows that the number of $2$-secants through $R$ in $\pi$ is $q\frac{q^{2n-3}}{2}=\frac{q^{2n-2}}{2}$. This means that there are $\frac{q^{2n-1}-q^{2n-2}}{2}>0$ $2$-secants to points of $\Q\setminus \pi$ through a point of $\pi$, different from $N$ and not in $\Q$.
\end{proof}

\begin{remark}\label{rem:pivotnucleus} In the definition of pivoting for parabolic quadrics with a nucleus given in \cite{QQ}, the authors replace a cone over a parabolic quadric with nucleus $N$ by a cone with the same vertex and base a different parabolic quadric with the same nucleus $N$. In this way they ensure that pivoting a parabolic quadric for $q$ even always gives rise to a parabolic quasi-quadric with nucleus $N$. 
In \cite[Lemma 1.6]{BHJS} it is proved that a parabolic quasi-quadric in $\PG(4,2)$ with nucleus is always a $\Q(4,2)$,
however, when we do not enforce the existence of a nucleus, we have seen that pivoting in $\Q(4,2)$ can result in a parabolic quasi-quadric in $\PG(4,2)$ which is not a quadric.
\end{remark}

We have seen in Corollary \ref{prop:Q4q} and Example \ref{ex:switchnotpivot}, that we can replace the pointset of a singular hyperplane of $\Q(2n,q)$, $q$ even, by a set of points which is not a cone over a quasi-quadric and obtain a parabolic quasi-quadric. But Remark \ref{rem:pivotnucleus} leads to the question: can we still do that if we want our obtained quasi-quadric to have a nucleus?
The following Corollary shows that this is not the case: when we want our quasi-quadric to have a nucleus, switching in singular hyperplanes is essentially pivoting, just like we have shown for all other polar spaces in Proposition \ref{prop:switchispivot}. The only difference is that in this case, we have two choices for the vertex of the cone to pivot with. 

\begin{corollary}\label{cor:parabolic}
Let $\P=\Q(2n,q)$, $n>1$, $q$ even and let $N$ be its nucleus. Suppose that $\P'$ is a parabolic quasi-polar space with nucleus $N'$ obtained by switching in a singular hyperplane $\pi$ of $\P$ with vertex $P$. Then either 
\begin{itemize}
\item $N=N'$ and $\P'\cap \pi$ is the point set of a cone with vertex $P$. If $\mu$ is a hyperplane of $\pi$ containing $N$ but not $P$ then the base of the cone is a parabolic quasi-quadric in $\mu$ with nucleus $N$. 
\item $N=P$ and $\P'\cap \pi$ is the point set of a cone with vertex $N$. If $\mu$ is a hyperplane of $\pi$ containing $P$ but not $N$ then the base of the cone is a parabolic quasi-quadric in $\mu$ with nucleus $P$.
\end{itemize} 

\end{corollary}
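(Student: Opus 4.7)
The plan is to combine Proposition \ref{prop:conelike} with the nucleus hypothesis on $\P'$. Proposition \ref{prop:conelike} already writes $\P'\cap\pi$ as a union of $\frac{q^{2n-2}-1}{q-1}$ truncated lines through $P$ or $N$ plus either the full line $PN$ or one of $\{P,N\}$; the goal is to force all generators to share a single vertex and to identify $N'$ with either $P$ or $N$.

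First I would place $N'$ inside $\pi$: by Lemma \ref{nucleus2}(iii) the singular hyperplanes of $\P'$ are precisely those through $N'$, and since $|\P'\cap\pi|=|P\Q(2n-2,q)|$, the hyperplane $\pi$ is one of them, so $N'\in\pi$. I would then argue by contradiction that $N'\in\{P,N\}$. Assuming $N'\neq P,N$, I pick a hyperplane $\nu$ of $\pi$ through $N'$ avoiding both $P$ and $N$; such $\nu$ exists for $n\geq 2$ by a dimension count in the quotient $\pi/N'\cong\PG(2n-2,q)$. The third bullet of Lemma \ref{lem:classical2} tells me that among the $q+1$ hyperplanes of $\PG(2n,q)$ through $\nu$ exactly $\pi$ is singular for $\P$, while $q/2$ are elliptic and $q/2$ hyperbolic. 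Every such hyperplane contains $N'\in\nu$ and is thus singular for $\P'$, so $|H\cap\P'|-|H\cap\P|=|\nu\cap\P'|-|\nu\cap\P|$ is the same constant for every $H\supsetneq\nu$ distinct from $\pi$; but the direct computation $|P\Q(2n-2,q)|-|\Q^\pm(2n-1,q)|=\mp q^{n-1}$ shows that the value of this difference must change with the type of $H$, producing the required contradiction.

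For $N'=N$ I would use a line through $N$ not in $\pi$ (tangent to $\Q(2n,q)$) together with the identity $|\ell\cap\P'|=|\ell\cap\P|-[N\in\P]+[N\in\P']$ to conclude $N\notin\P'$; then no truncated line of $\P'\cap\pi$ can pass through $N$, for otherwise $q\geq 2$ collinear points of $\P'$ on a line through $N=N'$ would violate the nucleus property in $\pi$. So every truncated line in Proposition \ref{prop:conelike} is through $P$, and the point count forces $\{P\}$ as the sole extra, making $\P'\cap\pi$ a cone with vertex $P$. Choosing $\mu\subset\pi$ with $N\in\mu$, $P\notin\mu$, I expect the base $\mathcal{C}'=\mu\cap\P'$ to be a parabolic quasi-quadric with nucleus $N$: translating the three admissible sizes of Proposition \ref{prop:conelike} through $P$ by the cone-over-base formula yields the hyperplane numbers $\{|\Q^-(2n-3,q)|,|P\Q(2n-4,q)|,|\Q^+(2n-3,q)|\}$ for $\mathcal{C}'$, and each line in $\mu$ through $N$ sits in $\pi$, misses $P$, and so meets the cone exactly where it meets $\mathcal{C}'$, forcing $|\ell\cap\mathcal{C}'|=1$. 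The case $N'=P$ is symmetric after swapping the roles of $P$ and $N$; the only extra ingredient is that every line through $P$ not lying in $\pi=P^\perp$ is a $2$-secant to $\Q(2n,q)$, so the analogue of the nucleus identity forces $P\notin\P'$. The main obstacle is the dichotomy in the second paragraph: the even-characteristic splitting of Lemma \ref{lem:classical2} is simultaneously what makes a stray nucleus position conceivable and what eliminates it, so everything hinges on setting up the count through a suitable $\nu$ cleanly.
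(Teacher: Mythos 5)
Your proof is correct, and although it follows the same skeleton as the paper's argument (place $N'$ in $\pi$, pin it down to $\{P,N\}$, extract the cone, identify the nucleus of the base), the two pivotal steps are executed by genuinely different means. To show $N'\in\{P,N\}$ the paper argues pointwise: every point of $\Q\cap\pi$ other than $P$ lies on a line of $\Q$ leaving $\pi$, and every point of $\pi\setminus\Q$ other than $N$ lies on a $2$-secant leaving $\pi$ (this is exactly the counting carried out in Proposition \ref{nonucleus}), so no such point can be a nucleus of $\P'$. You instead choose a hyperplane $\nu$ of $\pi$ through $N'$ missing $P$ and $N$ and play the forced constancy of $|H\cap\P'|-|H\cap\P|=|\nu\cap\P'|-|\nu\cap\P|$ over the $q$ hyperplanes $H\neq\pi$ through $\nu$ against the $q/2$--$q/2$ elliptic/hyperbolic split of Lemma \ref{lem:classical2}; since $|P\Q(2n-2,q)|-|\Q^\pm(2n-1,q)|=\mp q^{n-1}$ takes two distinct values, this gives a clean contradiction without re-invoking the secant count of Proposition \ref{nonucleus}. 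For the cone structure, the paper observes that all hyperplanes of $\pi$ not through $P$ meet $\P'$ in the constant $|\Q(2n-2,q)|$ (those through $N$ by the nucleus property, the rest from Proposition \ref{prop:conelike}) and applies Corollary \ref{lem:sing-char}; you instead start from the truncated-line description in Proposition \ref{prop:conelike} and note that a truncated line through the nucleus, or the inclusion of the full line $PN$, would put at least two points of $\P'$ on a line through $N'$, which forces every generator through $P$ and forces $P$ to be the single added point. Both routes are valid; yours leans more heavily on the structural conclusion of Proposition \ref{prop:conelike} and less on auxiliary counting, while the remaining steps (the base's intersection numbers via the cone-over-base formula, tangency of the lines of $\mu$ through $N$, and the symmetric case $N'=P$) coincide with the paper's.
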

\begin{proof} Since $\pi$ is a singular hyperplane, $\pi$ contains the nucleus, say $N$, of $\P$. Since $\P'$ is a parabolic quasi-quadric with nucleus, say $N'$, we know that $|\P'|=|\P|$, and hence, that $\pi$ is a singular hyperplane for $\P'$ too, and hence,  $N'$ is contained in $\pi$. A point $Q$ of $\P\cap \pi$, except for $P$ lies on a line contained in $\P$ which is not contained in $\pi$. Since a nucleus only lies on $1$-secants to $\P'$, and $\P$ and $\P'$ coincide outside of $\pi$, $Q$ is not the nucleus of $\P'$. From Proposition \ref{nonucleus}, we know that every point $R$ of $\pi$, not in $\P$ and not $N$, lies on a $2$-secant to $\P$, not in $\pi$, so $R$ cannot be the nucleus of $\P'$. We conclude that $N=N'$. Since all hyperplanes of $\pi$ through $N'$ meet $\P$ in $|\Q(2n-2,q)|$ points, we have that all hyperplanes of $\pi$ not through $P$ meet $\P'$ in $|\Q(2n-2,q)|$ points, and hence, by Lemma \ref{lem:conelike}, $\P'$ is a cone with vertex $P$. Since all hyperplanes of $\pi$ through $P$ meet $\P'$ in $|P\Q^-(2n-3,q)|,|\Q(2n-2,q)|$ or $|P\Q^+(2n-3,q)|$ points by Proposition \ref{prop:conelike}, we see that $\P'$ has as base a parabolic quasi-quadric, say contained in a hyperplane $\mu$. The nucleus of this quasi-quadric is then the point $\mu\cap PN$.

Finally, if $N'=P$, then reversing the roles of $P$ and $N$ in the above reasoning yields that $\P'$ is the point set of a cone with vertex $N$ and base a parabolic quasi-quadric with nucleus on the line $PN$. 
\end{proof}

\subsection{A sufficient condition for the existence of a nucleus}\label{sec:nucleus-sufficient}
Keeping Proposition \ref{nonucleus} in mind, we see that in order to have the existence of a nucleus follow from our hypotheses on a quasi-quadric, we need to add an extra condition. Henceforth, we assume that $n\geq 2$ and we will modify condition $(c)$ into the following weaker one: 
\begin{itemize}
\item[(c')] Every codimension $2$-space is contained in at least one singular hyperplane.
\end{itemize}
We will show in Proposition \ref{equivalentdef} that the sets satisfying (a)-(b)-(c), that is, the parabolic quasi-quadrics with nucleus as defined in \cite{QQ}, are exactly those satisfying (a)-(b')-(c'). 

\begin{lemma} \label{easydirection} A parabolic quasi-quadric with nucleus $N$ (that is, a set of points satisfying (a)-(b)-(c)) satisfies (a)-(b')-(c'). 
\end{lemma}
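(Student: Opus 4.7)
The plan is to verify each of the three conditions (a), (b'), (c') directly, leveraging the lemmas already established.

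First, condition (a) is literally part of the hypothesis for a parabolic quasi-quadric with nucleus, so there is nothing to prove. Second, condition (b') follows immediately from Lemma \ref{triviaal}, which shows that any set satisfying (b)-(c) already has the three-valued intersection pattern with hyperplanes required by (b'). So the only content is condition (c').

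For (c'), I would argue as follows. By Lemma \ref{nucleus2}(iii), any set satisfying (a), (b') and (c) also satisfies (d'); in particular, every hyperplane through $N$ is singular (meets $\S$ in $|P\Q(2n-2,q)|$ points). Hence it suffices to show that every codimension $2$-subspace $\mu$ of $\PG(2n,q)$ lies in some hyperplane containing $N$. If $N\in\mu$, then any of the $q+1$ hyperplanes through $\mu$ contains $N$ and is therefore singular. If $N\notin\mu$, then $\langle\mu,N\rangle$ is a hyperplane containing $\mu$ and $N$, so it is singular. Either way $\mu$ lies in a singular hyperplane, which is precisely (c').

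The lemma therefore reduces to invoking Lemma \ref{triviaal} for (b') and making the elementary projective-geometric observation that every codimension $2$-space of $\PG(2n,q)$ can be extended to a hyperplane through the fixed point $N$. There is no substantial obstacle here; the statement is really a bookkeeping check that the hypotheses (a)-(b)-(c) used in \cite{QQ} are at least as strong as the axioms (a)-(b')-(c') to be studied in Proposition \ref{equivalentdef}.
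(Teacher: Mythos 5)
Your proof is correct and follows essentially the same route as the paper: (b') is exactly Lemma \ref{triviaal}, and (c') is the observation that every codimension-$2$ space extends to a (singular) hyperplane through $N$. The only difference is that your appeal to Lemma \ref{nucleus2}(iii) is an unnecessary detour --- the fact that every hyperplane through $N$ is singular already follows directly from (c), as the proof of Lemma \ref{triviaal} shows.
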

\begin{proof} Let $\S$ be a set of points satisfying (a)-(b)-(c). From (b) we know that a hyperplane not through $N$ meets in $|\Q^+(2n-1,q)|$ or $|\Q^-(2n-1,q)|$ points, whereas (c) implies that the hyperplanes through $N$ meet in $|P\Q(2n-2,q)|$ points, hence $\S$ satisfies (b'). Now every codimension $2$ space $\Sigma$ either goes through $N$ and hence lies on only singular hyperplanes, or lies on exactly one singular hyperplane, namely $\langle \Sigma,N\rangle$.
\end{proof}

We will now show that the weaker condition (c') is strong enough to imply the existence of a nucleus.
\begin{lemma}\label{thereisnucleus} Suppose that $\S$ is a point set such that (a)-(b')-(c') hold, then (d') holds.
\end{lemma}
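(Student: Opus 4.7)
The plan is to dualize the problem and invoke the classical Bose--Burton theorem on line-blocking sets.

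First, I apply Lemma \ref{nucleus2}(i), which needs only (a) and (b'), to conclude that the set $\mathcal{N}$ of singular hyperplanes has cardinality exactly $\frac{q^{2n}-1}{q-1}$. This is also the number of hyperplanes of $\PG(2n,q)$ passing through any fixed point, which is already strongly suggestive: to prove (d'), it will be enough to show that $\mathcal{N}$ is contained in such a pencil, since the cardinalities will then force equality.

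Next, I pass to the dual projective space $\PG(2n,q)^*$. Under the standard duality, hyperplanes of $\PG(2n,q)$ correspond to points of $\PG(2n,q)^*$, codimension-$2$ subspaces of $\PG(2n,q)$ correspond to lines of $\PG(2n,q)^*$, and incidence is reversed. Let $\mathcal{N}^*$ be the set of $\frac{q^{2n}-1}{q-1}$ points in $\PG(2n,q)^*$ corresponding to the singular hyperplanes. Condition (c'), which asserts that every codimension-$2$ subspace of $\PG(2n,q)$ lies in at least one singular hyperplane, translates in the dual to the statement that every line of $\PG(2n,q)^*$ contains at least one point of $\mathcal{N}^*$. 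Thus $\mathcal{N}^*$ is a line-blocking set of $\PG(2n,q)^*$.

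Now I invoke the Bose--Burton theorem: any set of points in $\PG(2n,q)$ meeting every line has size at least $\frac{q^{2n}-1}{q-1}$, with equality if and only if it is a hyperplane. Since $|\mathcal{N}^*| = \frac{q^{2n}-1}{q-1}$, the set $\mathcal{N}^*$ must be a hyperplane of $\PG(2n,q)^*$. Dualizing back, a hyperplane of $\PG(2n,q)^*$ corresponds to a unique point $N \in \PG(2n,q)$, and the points of this hyperplane correspond precisely to the hyperplanes of $\PG(2n,q)$ through $N$. Hence the singular hyperplanes of $\S$ are exactly the $\frac{q^{2n}-1}{q-1}$ hyperplanes through $N$, yielding both assertions of (d').

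The only genuinely non-routine step is recognising this as an instance of the extremal Bose--Burton configuration; once the exact count from Lemma \ref{nucleus2}(i) is in hand, the dualization and the appeal to Bose--Burton are clean. A direct approach via counting incident pairs of codim-$2$ subspaces and singular hyperplanes would reach the same conclusion but would essentially have to reprove the extremality portion of Bose--Burton along the way, so the dual formulation gives the shortest path.
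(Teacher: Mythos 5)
Your proof is correct, and it takes a genuinely different route from the paper's. The paper argues directly in $\PG(2n,q)$: it counts incidences between points and singular hyperplanes to obtain the identity $\sum_i \bigl(x_i-\frac{q^{2n-1}-1}{q-1}\bigr)\bigl(x_i-\frac{q^{2n}-1}{q-1}\bigr)=0$, where $x_i$ is the number of singular hyperplanes through the $i$-th point; it then uses (c') and an induction on codimension to establish the lower bound $x_i\geq\frac{q^{2n-1}-1}{q-1}$, which forces each $x_i$ to equal one of the two roots, and a final count shows exactly one point attains the larger value. Your approach instead takes the exact count of singular hyperplanes from Lemma \ref{nucleus2}(i) (which indeed uses only (a) and (b')), passes to the dual space where (c') says the dual point set $\mathcal{N}^*$ blocks every line, and invokes the extremal case of the Bose--Burton theorem to conclude that $\mathcal{N}^*$ is a dual hyperplane, i.e.\ a pencil of hyperplanes through a common point $N$. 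The dualization of incidence and the application of Bose--Burton are both correct, and the cited theorem is even reference \cite{BB} of the paper (used there for a different purpose in Lemma \ref{lem:correct-points}). What your route buys is brevity and modularity: the paper's variance-plus-induction argument is, as you observe, essentially a hands-on reproof of the extremality statement in the dual setting, whereas you outsource that to a classical result. What the paper's route buys is self-containedness, and its induction step makes explicit the quantitative strengthening of (c') to subspaces of every codimension, which is reused in spirit elsewhere in Section \ref{sec:nucleus-sufficient}. Both proofs deliver the same conclusion, namely that the singular hyperplanes are precisely the $\frac{q^{2n}-1}{q-1}$ hyperplanes through a single point $N$, which is the full content of (d').
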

\begin{proof} We first show that every point is contained in $\frac{q^{2n-1}-1}{q-1}$ or $\frac{q^{2n}-1}{q-1}$ singular hyperplanes.
We count, in two ways, pairs $(P,\Pi)$ where $P$ is a point in a singular hyperplane $\Pi$ and then triples $(P,\Pi,\Pi')$ where $\Pi'$ is a hyperplane and $P$ is a point contained in the hyperplane intersection $\Pi\cap\Pi'$. Let $I$ index the points of $\PG(2n,q)$; then $|I| = \frac{(q^{2n+1}-1)}{q-1}$. Let $x_i$ be the number of singular hyperplanes through the $i$-th point. We find:

$$\sum_i x_i = \left(\frac{q^{2n}-1}{q-1}\right)^2$$
$$\sum_i x_i(x_i-1) = \frac{q^{2n}-1}{q-1} \left(\frac{q^{2n}-1}{q-1}-1\right)\frac{q^{2n-1}-1}{q-1}.$$
Since $|I| = \frac{q^{2n+1}-1}{q-1}$ we obtain
\begin{align}\sum_i \left(x_i-\frac{q^{2n-1}-1}{q-1}\right)\left(x_i-\frac{q^{2n}-1}{q-1}\right) = 0.\label{f1}\end{align}

We need to show that $x_i\geq \frac{q^{2n-1}-1}{q-1}$. This follows from an induction argument: each codimension $j$-space, where $j\geq 1$ lies on at least $\frac{q^j-1}{q-1}$ singular hyperplanes. 

The case $j=1$ is precisely (c'). Now let $\Sigma$ be a codimension $(j+1)$-space. Then $\Sigma$ is contained in $\frac{q^{2n-j}-1}{q-1}$
codimension $j$-spaces, each of which are contained in at least $\frac{q^j-1}{q-1}$ singular hyperplanes. The number of codimension $j$-spaces containing a codimension $(j+1)$-space in 
a hyperplane is equal to $\frac{q^{2n-j-1}-1}{q-1}$. 
Thus we obtain that $\Sigma$ is contained in at least $\frac{(q^{2n-j}-1)(q^j-1)}{(q^{2n-j-1}-1)(q-1)}>q\frac{q^j-1}{q-1}$ singular hyperplanes, which completes the inductive proof.

Using equation \eqref{f1}, it follows that, for all $i$, $x_i=\frac{q^{2n-1}-1}{q-1}$ or $x_i=\frac{q^{2n}-1}{q-1}$. Let $a_1$ be the number of points lying on $\frac{q^{2n-1}-1}{q-1}$ singular hyperplanes and $a_2$ be the number of points lying on $\frac{q^{2n}-1}{q-1}$ singular hyperplanes. Standard counting yields
$$a_1+a_2 = \frac{q^{2n+1}-1}{q-1}$$
$$a_1\frac{q^{2n-1}-1}{q-1}+a_2\frac{q^{2n}-1}{q-1}=(\frac{q^{2n}-1}{q-1})^2$$

It follows that $a_2=1$, so there exists a unique point, say $N$ lying on all $\frac{q^{2n}-1}{q-1}$ singular hyperplanes.
\end{proof}

We now show that we can deduce the size of a parabolic quasi-quadric if we impose condition (d').
\begin{lemma}\label{sizeparabolic} If (b') and (d') hold, then (a) holds.\end{lemma}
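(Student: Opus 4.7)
The plan is to use a single double count of incidences between $\S$ and the hyperplanes through the point $N$ provided by (d'). By (d'), each of the $\frac{q^{2n}-1}{q-1}$ hyperplanes through $N$ meets $\S$ in exactly $u_3 := |P\Q(2n-2,q)| = \frac{q^{2n-1}-1}{q-1}$ points. Writing $\epsilon = 1$ if $N \in \S$ and $\epsilon = 0$ otherwise, I would count pairs $(P, H)$ with $P \in \S \cap H$ and $N \in H$ in two ways: hyperplane by hyperplane this gives $\frac{q^{2n}-1}{q-1} \cdot u_3$, while point by point it gives $\epsilon \cdot \frac{q^{2n}-1}{q-1} + (|\S|-\epsilon) \cdot \frac{q^{2n-1}-1}{q-1}$, using that a point distinct from $N$ lies on $\frac{q^{2n-1}-1}{q-1}$ hyperplanes through $N$ while $N$ itself lies on all $\frac{q^{2n}-1}{q-1}$.

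Equating these two counts and simplifying yields $|\S| = \frac{q^{2n}-1}{q-1} - \epsilon \cdot \frac{q^{2n-1}(q-1)}{q^{2n-1}-1}$. If $\epsilon = 0$ this is exactly statement (a), so the proof is complete in this case; note that (b') is not explicitly used here, only the fact from (d') that every hyperplane through $N$ has $u_3$ points of $\S$.

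The main obstacle is excluding the case $\epsilon = 1$, that is, showing $N \notin \S$. In that case the right hand side equals $\frac{q^{2n}-1}{q-1} - q^{2n-1}/[2n-1]_q$, where I set $[2n-1]_q := \frac{q^{2n-1}-1}{q-1}$. For $|\S|$ to be a non-negative integer, $[2n-1]_q$ must divide $q^{2n-1}$. Since $[2n-1]_q = 1 + q + \cdots + q^{2n-2} \equiv 1 \pmod q$ is coprime to $q$, this divisibility forces $[2n-1]_q = 1$, which is impossible for $n \geq 2$ and $q \geq 2$ as $[2n-1]_q \geq 1 + q + q^2 \geq 7$. This contradiction rules out $\epsilon = 1$, forcing $N \notin \S$ and giving (a). The numerical integrality obstruction in this last step is the delicate part, since the counting identity alone does not distinguish between the two cases.
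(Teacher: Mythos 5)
Your proof is correct, and it takes a genuinely different and more economical route than the paper's. The paper keeps all three hyperplane classes in play: it writes down the standard first- and second-moment counting identities over \emph{all} hyperplanes of $\PG(2n,q)$, using (b$'$) to know the admissible intersection sizes $u_1,u_2$ of the non-singular hyperplanes and (d$'$) only to pin the number of singular hyperplanes at $\frac{q^{2n}-1}{q-1}$; eliminating $\alpha_1,\alpha_2$ gives a quadratic in $|\S|$ with $|\Q(2n,q)|$ as one root, and the second root is discarded because $\frac{q^{2n}-1}{q^{2n-1}-1}$ is not an integer for $n\ge 2$. You instead restrict to the pencil of hyperplanes through $N$ and use a single first-moment count there, which avoids the second-moment identity entirely and, as you observe, never invokes (b$'$): you actually prove the stronger statement that the second clause of (d$'$) alone forces $|\S|=\frac{q^{2n}-1}{q-1}$. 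The price is that your point-by-point count depends on whether $N\in\S$, a case distinction the paper's global quadratic never sees; your exclusion of $N\in\S$ is sound, since $\frac{q^{2n-1}-1}{q-1}$ is coprime to $q$ and so divides $q^{2n-1}$ only if it equals $1$, impossible for $n\ge 2$, $q\ge 2$. It is worth noting that the two integrality obstructions are the same arithmetic fact in disguise: since $q^{2n}-1=q(q^{2n-1}-1)+(q-1)$, both reduce to $q^{2n-1}-1\nmid q-1$.
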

\begin{proof} As in Lemma \ref{nucleus2}, let $\alpha_1$ be the number of hyperplanes meeting $\S$ in $u_1=|\Q^-(2n-1,q)|$ points, $\alpha_2$ be the number of hyperplanes meeting $\S$ in $u_2=|\Q^+(2n-1,q)|$ points.  By (d') we know that the number of hyperplanes meeting $\S$ in $u_3=|P\Q(2n-2,q)|$ is $\frac{q^{2n}-1}{q-1}$ and we find:
\begin{align}
\alpha_1+\alpha_2&=\frac{q^{2n+1}-1}{q-1}-\frac{q^{2n}-1}{q-1}\label{e1}\\
\alpha_1 u_1+\alpha_2 u_2&=|S|-\frac{q^{2n}-1}{q-1}u_3\label{e2}\\
\alpha_1 u_1 (u_1-1)+\alpha_2u_2 (u_2-1)&=|S|(|S|-1)\frac{q^{2n-1}-1}{q-1}-\frac{q^{2n}-1}{q-1}u_3(u_3-1)\label{e3}\end{align}

Using the first two equations to write $\alpha_1$ and $\alpha_2$ in function of $|S|$, we see that the third yields a quadratic equation in $|S|$ whose sum of roots is given by
$\frac{(q^{2n}-1)(u_1+u_2-1)}{q^{2n-1}-1}+1$. Since $|S|=|\Q(2n,q)|=\frac{q^{2n}-1}{q-1}$ is a solution, the other solution is an integer if and only if
$$\frac{(q^{2n}-1)(u_1+u_2-1)}{q^{2n-1}-1}$$ is an integer.

This expression equals $$ \frac{q^{2n}-1}{q^{2n-1}-1}(2\frac{q^{2n-1}-1}{q-1}-1)=2\frac{q^{2n}-1}{q-1}-\frac{q^{2n}-1}{q^{2n-1}-1}$$ and we see that the latter term in not an integer for $n\geq 2$.
\end{proof}

Finally, we show that when a parabolic quasi-quadric has a nucleus, then $q$ is necessarily even. 
\begin{lemma}\label{lem:nucleus-even} If (b') and (d') hold for a point set in $\PG(2n,q), n\geq 2$, then $q$ is even.
\end{lemma}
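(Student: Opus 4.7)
The plan is to argue by contradiction. Suppose $q$ is odd; we will use Lemma \ref{sizeparabolic} to obtain (a), and (noting that (d') implies (d)) Lemma \ref{nucleus2}(ii) to obtain (b) and (c). Thus $|\S| = |\Q(2n,q)|$ and $N$ is a genuine nucleus: $N \notin \S$ and every line through $N$ contains exactly one point of $\S$. Write $u_1 := |\Q^-(2n-1,q)|$, $u_2 := |\Q^+(2n-1,q)|$ and $u_3 := |P\Q(2n-2,q)|$; short calculations give $u_1+u_2 = 2u_3$ and $u_2-u_1 = 2q^{n-1}$, so $u_1 = u_3 - q^{n-1}$ and $u_2 = u_3 + q^{n-1}$.

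The crux is a parity statement for lines not through $N$. For such a line $L$, partition the hyperplanes containing $L$ into the $(q^{2n-2}-1)/(q-1)$ ones through $N$ (all singular, each meeting $\S$ in $u_3$ points by (b)) and the $q^{2n-2}$ remaining ones (each meeting $\S$ in $u_1$ or $u_2$ points by (b')). Let $a$ be the number of the latter type with $u_1$ points. A double count of $\sum_{H\supseteq L}|H\cap\S|$, combined with $|\S|-u_3 = q^{2n-1}$ and the identities above, simplifies to
\[|L\cap\S| = (q^{n-1}+1) - \frac{2a}{q^{n-1}}.\]
Since $q$ is odd, $\gcd(q^{n-1},2)=1$ forces $q^{n-1}\mid a$, and hence $|L\cap\S|$ has the same parity as $q^{n-1}+1$, namely \emph{even}.

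Now fix $P\in\S$. Every line through $P$ not through $N$ has $|L\cap\S|\geq 1$ and even, hence $|L\cap\S|\geq 2$. There are exactly $qu_3$ such lines, and they have to distribute the $|\S|-1 = qu_3$ points of $\S\setminus\{P\}$ one per line (the only point of $\S$ on $PN$ is $P$). Therefore each such line is a $2$-secant, and the set $K := \S\cup\{N\}$ meets every line of $\PG(2n,q)$ in $0$ or $2$ points. An easy induction (any line in a subspace $U$ through a point of $K$ remains a $2$-secant) then shows
\[|U\cap K| = 1 + \frac{q^{\dim U}-1}{q-1} \qquad \text{whenever } U\cap K\neq\emptyset.\]
Applied to any hyperplane $H\not\ni N$ meeting $\S$, this forces $|H\cap\S| = u_3+1$, which is strictly between $u_1 = u_3-q^{n-1}$ and $u_2 = u_3+q^{n-1}$ (since $q^{n-1}\geq 2$ under $n\geq 2$ and $q\geq 2$), hence equal to neither. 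As $u_1, u_2 > 0$ for $n\geq 2$, some hyperplane not through $N$ does meet $\S$, contradicting (b').

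The main obstacle is the clean derivation of the formula $|L\cap\S| = (q^{n-1}+1) - 2a/q^{n-1}$, which requires a careful double count and the small algebraic identities between $u_1, u_2, u_3$. Once this is established the hypothesis ``$q$ odd'' enters in exactly one place — forcing $q^{n-1}\mid a$, and thus the even parity of $|L\cap\S|$ — after which the rigidity of the ``every line is a $0$- or $2$-secant'' structure of $K$ rapidly yields the contradiction.
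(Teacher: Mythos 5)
Your proof is correct. Every step checks out: Lemma \ref{sizeparabolic} gives (a), and since (d') implies (d), Lemma \ref{nucleus2}(ii) legitimately upgrades your hypotheses to (a)--(b)--(c) (its proof nowhere assumes $q$ even, so invoking it inside a proof by contradiction for odd $q$ is fine). I verified the identities $u_1+u_2=2u_3$, $u_2-u_1=2q^{n-1}$, the hyperplane counts $\frac{q^{2n-2}-1}{q-1}$ and $q^{2n-2}$, and the double count, which does simplify to $|L\cap\S|=(q^{n-1}+1)-2a/q^{n-1}$; the divisibility step and the endgame are sound. Your route differs from the paper's in its key step. The paper also pivots on the fact that, for $q$ odd, every line meets $\S\cup\{N\}$ in an even number of points, but it gets there differently: it observes that $|\S\cup\{N\}|$ is odd while every hyperplane meets $\S\cup\{N\}$ evenly (using only the parities of $|\Q(2n,q)|$, $u_1$, $u_2$, $u_3$), and then runs a top-down parity induction through the codimensions to reach lines; it then derives the contradiction from the weaker statement that an elliptic hyperplane would contain at least $1+\frac{q^{2n-1}-1}{q-1}>u_1$ points of $\S$. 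Your version replaces that induction with an exact counting formula for $|L\cap\S|$ over the pencil of hyperplanes through $L$, with parity extracted from the divisibility $q^{n-1}\mid a$; this buys you a stronger structural conclusion ($\S\cup\{N\}$ is a set of type $(0,2)$ with respect to lines, so every hyperplane meeting it has exactly $u_3+1$ points of it), at the cost of needing the full nucleus property (c) up front to handle the line $PN$ and the hyperplanes through $N$. One cosmetic remark: the fact that hyperplanes through $N$ meet $\S$ in $u_3$ points is a consequence of (d') (or of (c) via Lemma \ref{triviaal}), not of (b), which only speaks about hyperplanes avoiding $N$.
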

\begin{proof}
Suppose that (b') and (d') hold, then Lemma \ref{sizeparabolic} implies that $|\S|=|\Q(2n,q)|$. The singular hyperplanes are precisely the hyperplanes containing $N$. Note that if $q$ is odd, $|\S|=|\Q(2n,q)|$ is even, $|\Q^\pm(2n-1,q)|$ is even and $|P\Q(2n-2,q)|$ is odd. Hence, when we consider the set $\S'=\S\cup \{N\}$ we see that every hyperplane meets $\S'$ in an even number of points. Note that $|\S'|=1\pmod{2}$.
We claim that a codimension $j$ space has $j+1\pmod 2$ points of $\S'$ and will proceed to show this by induction. We have just established the base case $j=1$ so suppose our claim holds for some $1\leq j_0\leq 2n-2$ and consider a codimension $j_0+1$ space $\pi$. There are $\frac{q^{j_0}-1}{q-1}$ codimension $j_0$-spaces through $\pi$ which is a number congruent to $j_0\pmod 2$.  Suppose to the contrary that $\pi$ contains $j_0\pmod 2$ points, then, by our induction hypothesis, each codimension $j_0$ space through $\pi$ contains $1\pmod 2$ points of $\S$ outside $\pi$, and we find that $|\S'|-x=j_0\pmod 2$. Since $x=j_0\pmod 2$, we find that $|\S'|=0 \mod 2$, a contradiction. By induction, our claim holds. In particular, we see that every line, which is a codimension $2n-1$ space, contains an even number of points of $\S'$. Now consider a hyperplane $\mu$ meeting $\S$ in $|\Q^-(2n-1,q)|$ points and let $P$ be a point of $\S$. Since every line through $\S$ needs to contain at least one extra point of $\S\cup \{N\}$, we find that there are at least $\frac{q^{2n-1}-1}{q-1}$ points of $\S$ in $\mu$, a contradiction since $\frac{q^{2n-1}-1}{q-1}>|\Q^-(2n-1,q)|$.
\end{proof}

Combining the above Lemma's, we see that the parabolic quasi-quadrics as defined by the properties (a)-(b)-(c) are precisely the parabolic quasi-quadrics satisfying the weaker hypotheses (a)-(b')-(c').
\begin{proposition} \label{equivalentdef}The parabolic quasi-quadrics with nucleus as defined in \cite{QQ} are precisely those point sets $\S$ in $\PG(2n,q)$, satisfying
(a)-(b')-(c').
\end{proposition}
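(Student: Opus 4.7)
The proof is essentially a direct assembly of the four lemmas established immediately before the statement. The forward implication (that every parabolic quasi-quadric with nucleus as defined in \cite{QQ} satisfies (a)-(b')-(c')) has already been dispatched as Lemma \ref{easydirection}, so the real content is the reverse implication.

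For the reverse direction, I assume $\S$ satisfies (a)-(b')-(c') and would proceed in two moves. First, I apply Lemma \ref{thereisnucleus} to obtain (d'): there is a distinguished point $N$ common to all singular hyperplanes, and conversely every hyperplane through $N$ is singular. Since (d') trivially implies (d), I may then invoke Lemma \ref{nucleus2}(ii), which takes (a)+(b')+(d) as hypothesis and produces (a)-(b)-(c). This delivers the required parabolic quasi-quadric with nucleus $N$ and completes the equivalence.

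There is no genuine obstacle remaining at this stage of the paper, because the hard work has been packaged into the preceding lemmas: in particular, Lemma \ref{thereisnucleus} does the real combinatorial lifting, using a double counting of (point, singular hyperplane) incidences against (point, singular hyperplane, hyperplane) incidences, together with an induction on codimension to establish that the pointwise count attains the lower bound $\frac{q^{2n-1}-1}{q-1}$. As a brief closing remark I would note that one could even weaken the hypothesis by dropping (a), since Lemma \ref{sizeparabolic} recovers the correct cardinality from (b') and (d'), and Lemma \ref{lem:nucleus-even} then forces $q$ to be even — so the equivalence in fact holds in the a priori broader setting where one only imposes (b')-(c').
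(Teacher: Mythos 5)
Your main argument is correct and is essentially the paper's own proof: forward direction by Lemma \ref{easydirection}, reverse direction by chaining Lemma \ref{thereisnucleus} (to get (d'), hence (d)) with Lemma \ref{nucleus2}(ii). Two small points. First, the paper's proof additionally invokes Lemma \ref{lem:nucleus-even} in the main line of the reverse direction: since the definition in \cite{QQ} is stated only for $q$ even, one must check that (b') and (d') force $q$ to be even before concluding that $\S$ is a parabolic quasi-quadric with nucleus \emph{in the sense of \cite{QQ}}; the paper also treats $n=1$ separately (ovals), which you may omit given the standing assumption $n\geq 2$ in this subsection.

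Second, your closing remark that the hypothesis (a) could be dropped is circular as justified. Lemma \ref{sizeparabolic} recovers (a) from (b') and (d'), but the only route you have to (d') is Lemma \ref{thereisnucleus}, whose hypotheses include (a) and whose proof genuinely uses it: the identity $\sum_i x_i=\bigl(\tfrac{q^{2n}-1}{q-1}\bigr)^2$ rests on knowing that the number of singular hyperplanes is $\tfrac{q^{2n}-1}{q-1}$, which is Lemma \ref{nucleus2}(i) and requires (a). So you would need (d') to recover (a), but (a) to derive (d'). Whether (b')--(c') alone suffice is not settled by the lemmas as stated, and indeed Remark \ref{rem:sizeparabolic} records that even the weaker question of whether (b') determines the cardinality is open in general; the aside should be deleted or flagged as conjectural.
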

\begin{proof} If $n=1$, it is well known that an oval in $\PG(2,q)$, $q$ even has a nucleus. It is clear that the point sets satisfying (a)-(b)-(c), as well as those satisfying (a)-(b')-(c') are precisely the ovals. If $n>1$, then we have seen in Lemma \ref{easydirection} that every parabolic quasi-quadric with nucleus as defined in \cite{QQ} satisfies (a)-(b')-(c'). On the other hand, if $\S$ is a set of points satisfying (a)-(b')-(c'), we have shown in Lemma \ref{thereisnucleus} that (d') (and hence (d)) holds. Lemma \ref{nucleus2} then shows that $\S$ indeed satisfies (a)-(b)-(c). Finally, Lemma \ref{lem:nucleus-even} shows that $q$ is indeed even.
\end{proof}

\begin{corollary} The parabolic quasi-quadrics that have the property that the singular hyperplanes are precisely those through a common point (i.e. they satisfy (d')) are precisely the parabolic quasi-quadrics with nucleus as defined in \cite{QQ}.
\end{corollary}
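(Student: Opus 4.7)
The plan is to derive the corollary by simply chaining together the lemmas already established in Subsection~\ref{sec:nucleus-sufficient}; no new calculation is required.

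First, I would observe that ``parabolic quasi-quadric'' already encodes condition (b'), so the hypothesis of the corollary is exactly ``$\S$ satisfies (b') and (d')'' and the conclusion is ``$\S$ satisfies (a)--(b)--(c)'', the definition of parabolic quasi-quadric with nucleus from \cite{QQ}. The corollary is therefore the equivalence $(\mathrm{b'})\wedge(\mathrm{d'}) \Longleftrightarrow (\mathrm{a})\wedge(\mathrm{b})\wedge(\mathrm{c})$, and both directions are already available as ingredients.

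For the forward direction, suppose $\S$ is a parabolic quasi-quadric (so (b') holds) with (d'). By Lemma \ref{sizeparabolic}, (b') and (d') imply (a), so $|\S| = |\Q(2n,q)|$. Since (d') trivially implies (d), I can then invoke Lemma \ref{nucleus2}(ii), which states that (a), (b') and (d) together yield (a)--(b)--(c). Hence $\S$ is a parabolic quasi-quadric with nucleus in the sense of \cite{QQ}.

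For the converse, suppose $\S$ satisfies (a)--(b)--(c). Then (b') follows immediately from Lemma \ref{triviaal}, and Lemma \ref{nucleus2}(iii) gives (d'). This completes the equivalence. The only step that deserves a brief remark is the small-dimensional case $n=1$: an oval in $\PG(2,q)$ with $q$ even has a unique nucleus, so the statement holds trivially there, and for $n\geq 2$ the argument above (which relies on Lemma \ref{sizeparabolic}, valid for $n\geq 2$) applies. No step is really an obstacle here; the corollary is simply a convenient restatement of Proposition \ref{equivalentdef} together with Lemma \ref{nucleus2}(ii)--(iii), phrased in terms of the intersection-point characterisation of the nucleus rather than in terms of condition (c').
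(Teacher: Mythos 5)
Your proof follows essentially the same route as the paper: for $n\geq 2$ it chains Lemma \ref{sizeparabolic} (to get (a) from (b') and (d')) with Lemma \ref{nucleus2}(ii) (to get (a)--(b)--(c)), handles $n=1$ separately via the oval case, and obtains the converse from Lemma \ref{triviaal} and Lemma \ref{nucleus2}(iii). The one ingredient you omit is Lemma \ref{lem:nucleus-even}: the definition in \cite{QQ} is stated only for $q$ even, so to conclude that a set satisfying (b') and (d') really is ``a parabolic quasi-quadric with nucleus as defined in \cite{QQ}'' one must also rule out odd characteristic, which is exactly what that lemma does. This is a small but genuine step that the paper includes and your chain of implications does not supply; adding the single sentence ``Lemma \ref{lem:nucleus-even} shows that $q$ is even'' closes it.
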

\begin{proof} For $n=1$, this follows from the fact that a parabolic-quadric that satisfies property (d') necessarily has size $q+1$, and hence, is an oval. For $n\geq 2$, we have seen in Lemma \ref{sizeparabolic} that every parabolic quasi-quadric satisfying (d') has size $|\Q(2n,q)|$, so (a) holds. Lemma \ref{nucleus2}(ii) then shows that $\S$ indeed satisfies (a)-(b)-(c) and  Lemma \ref{lem:nucleus-even} shows that $q$ is indeed even.
\end{proof}

\begin{remark} \label{rem:sizeparabolic} We have shown in this paper that a point set satisfying (b') and (c') has size $|\Q(2n,q)|$. It is an interesting open question to see whether (b') implies (a). It is clear that for $n=1$, any arc will satisfy (b'), and hence, (a) does not follow from (b') when $n=1$. However, we were unable to construct an example of a parabolic quasi-quadric in $\PG(2n,q)$, $n>1$ that does not satisfy (a). 

Using similar standard counting arguments as in Lemma \ref{nucleus2}, one can show that the size of a parabolic quasi-quadric in $\PG(4,q)$ is congruent to $1$ mod $q$ and lies in the interval $[q^3+q^2-q\sqrt{q}+1,q^3+q^2+q\sqrt{q}+1]$. There are several values in this interval for which the corresponding solutions to the system of Lemma \ref{nucleus2} are positive integers, which explains our inability to deduce that the size would necessarily be $q^3+q^2+q+1$.
Furthermore, showing that the size of a quasi-quadric is $|\Q(2n,q)|$ under the additional hypothesis of knowing the intersection sizes with co-dimension $2$-spaces, is already fairly complicated (see \cite[Lemma 2.21]{SDW-JS2}). 

\end{remark}

\section{Appendix A}\label{AppendixA}

{\bf Proof of Lemma \ref{lem:switching-nonsingular} cont.:}

\begin{proof}
{\bf Case 2: $\P = \Q(2n,q)$}, $n\geq 2$. We know that every hyperplane meets $\P$ in either $|\Q^-(2n-1,q)|$, $|P\Q(2n-2,q)|$ or $|\Q^+(2n-1,q)|$ points. Suppose that $\pi$ is a non-singular hyperplane (i.e., meeting $\P$ in a $\Q^\pm(2n-1,q)$), and that $\P'$ is a quasi-quadric obtained by switching in $\pi$. We will show that $\P=\P'$.

If $\mu$ is a hyperplane of $\pi$ meeting $\Q^\pm(2n-1,q)$ in $|P\Q^\pm(2n-3,q)|$ points, then $\mu$ is contained in exactly $1$ singular hyperplane and $q$ non-singular hyperplanes. So the hyperplanes through $\mu$ have either  $|P\Q(2n-2,q)|-|P\Q^\pm(2n-3,q)|$ or $|\Q^\pm(2n-1,q)|-|P\Q^\pm(2n-3,q)|$ points of $\P$ outside $\pi$. This implies that $\mu\cap \P'$ has either $|\Q(2n-2,q)|$ or $|P\Q^\pm(2n-3,q)|$ points.

If $\mu$ is a hyperplane of $\pi$ meeting $\Q^\pm(2n-1,q)$ in $|\Q(2n-2,q)|$ points, then $\mu$ lies on at most $2$ singular hyperplanes, and on at least $(q-1)/2$ hyperplanes of elliptic and hyperbolic type. Hence, if $q\geq 4$, there is at least one elliptic and one hyperbolic hyperplane through $\mu$, different from $\pi$. It follows that $|\mu\cap \P'|=|\Q(2n-2,q)|$.

Hence, we find that $\P'\cap \pi$ is a set $\T$ of points in $\PG(2n-1,q)$ such that every hyperplane meets $\T$ in either $|\Q(2n-2,q)|$ or $|P\Q^\pm(2n-3,q)|$ points. Thus $\P'\cap \pi$ is a quasi-quadric in $\PG(2n-1,q)$, and has, since $q>2$ by Lemma \ref{lem:correct-points}, $|\Q^\pm(2n-1,q)|$ points. Moreover, we see that hyperplanes meeting $\P\cap \pi$ in $|\Q(2n-2,q)|$ points meet $\T$ in $|\Q(2n-2,q)|$ points. There are only two types of hyperplanes, and the number of each of them is determined (see Lemma \ref{lem:correct-points}). It follows that hyperplanes of $\pi$ meeting $\P$ in $|P\Q^\pm(2n-3,q)|$ points meet $\P'$ in $|P\Q^\pm(2n-3,q)|$ points. By Lemma \ref{lem:equal-pointsets}, the point sets $\P\cap \pi$ and $\P'\cap \pi$ coincide. This implies that $\P=\P'$.

{\bf Case 3: $\P=\H(m,q^2)$}. Let $\P=\H$ be a Hermitian variety $\H(m,q^2)$. We know that every hyperplane meets $\H$ in either $|\H(m-1,q^2)|$ or $|P\H(m-2,q^2)|$ points. Suppose that $\pi$ is a non-singular hyperplane (i.e., meeting $\H$ in a $\H(m-1,q^2)$) and that $\H'$ is a quasi-Hermitian variety obtained by switching in $\pi$. We will show that $\H=\H'$.
By Corollary \ref{samesize} we know that $|\H|=|\H'|$.

If $\mu$ is a hyperplane of $\pi$, then $\mu$ lies on both singular and non-singular hyperplanes, different from $\pi$. This implies that $\mu$ meets $\H$ in the same number of points as $\H'$. Using Lemma \ref{lem:equal-pointsets}  we find that  $\H\cap\pi$ and $\H'\cap\pi$ coincide. Hence, $\H=\H'$.
\end{proof}

\begin{proposition}\label{Q23}
Let $\P$ be an oval in $\PG(2,3)$ and let $\P'$ be an oval obtained by switching $\P$ in the line $L$. Then $\P=\P'$ or $L$ is a secant line to $\P$ and $\P'$ is obtained by removing one of the two intersection points of $\P$ with $L$ and adding the unique external point to the oval on the line $L$.\end{proposition}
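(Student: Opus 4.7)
The plan is to split into three cases according to whether $L$ is a passant, a tangent, or a secant of $\P$, using throughout that $|\P|=|\P'|=4$ (the size of an oval in $\PG(2,3)$) and that $\P \setminus L = \P' \setminus L$.

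The passant and tangent cases are handled quickly. If $L$ misses $\P$, then $\P \subseteq \P'$ and $|\P|=|\P'|$ give $\P=\P'$. If $L$ is tangent to $\P$ at a point $P_0$, then $\P \setminus \{P_0\}$ consists of three points outside $L$, which must lie in $\P'$, so $\P' = (\P \setminus \{P_0\}) \cup \{P_0'\}$ for some $P_0' \in L$. The three pairwise joins of the points in $\P \setminus \{P_0\}$ are three distinct secants of this $3$-point set; each of them meets $L$ in a point different from $P_0$ (otherwise $P_0$ would be collinear with two further points of $\P$), so these three intersection points exhaust $L \setminus \{P_0\}$. For $\P'$ to be an oval, $P_0'$ must avoid all of them, forcing $P_0' = P_0$ and hence $\P' = \P$.

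The secant case is the heart of the argument. Write $\P \cap L = \{P_1,P_2\}$, $\P \setminus L = \{P_3,P_4\}$, $M = P_3P_4$, and $R = L \cap M$. The key claim is that $R$ is the unique internal point of $\P$ on $L$, so that the fourth point $E$ of $L$ is the unique external point of $\P$ on $L$. To see this, I would first observe that in $\PG(2,3)$ an external point of $\P$ (a point off $\P$ lying on a tangent) lies on exactly two tangents, and a count of the four lines through such a point shows it lies on exactly one secant of $\P$. Since $R$ lies on the two distinct secants $L$ and $M$, it cannot be external; and $R \notin \P$, for otherwise $M$ would contain three oval points. Hence $R$ is internal. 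Uniqueness then follows because any other internal point of $\P$ on $L$ would lie on a second secant of $\P$, whose two oval points, being off $L$, would have to be $P_3$ and $P_4$; that forces the second secant to be $M$ and the point to coincide with $R$.

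To conclude, write $\P' = \{P_3,P_4,Q_1,Q_2\}$ with $\{Q_1,Q_2\} \subset L$. The only possible collinear triples among these four points are $\{P_3,P_4,Q_i\}$, since triples of the form $\{P_j,Q_1,Q_2\}$ would require $P_j \in L$, which is false; and $\{P_3,P_4,Q_i\}$ is collinear precisely when $Q_i = R$. Thus $\P'$ is an oval if and only if $\{Q_1,Q_2\} \subseteq L \setminus \{R\} = \{P_1,P_2,E\}$, which gives three possibilities: $\{P_1,P_2\}$ (so $\P' = \P$) and $\{P_i,E\}$ for $i \in \{1,2\}$ (the swaps described in the statement). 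The main obstacle is the identification of $R$ as internal and of $E$ as external; once that small structural fact about ovals in $\PG(2,3)$ is in hand, the rest is elementary bookkeeping on a four-point set.
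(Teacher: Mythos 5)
Your proof is correct and follows essentially the same route as the paper's: in the secant case you identify $P_3P_4\cap L$ as the (unique) internal point on $L$, which is excluded from $\P'$ by the secant $P_3P_4$, and the remaining external point as the only admissible replacement, exactly as in the paper's argument. The tangent case likewise matches (the secants of $\P\setminus\{P_0\}$ cover $L\setminus\{P_0\}$), and your explicit treatment of the passant case is a trivial addition the paper omits.
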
 
\begin{proof} Since both $\P$ and $\P'$ are ovals, and $\P'$ is obtained by switching $\P$, we know that $|\P\cap L|=|\P'\cap L|$. If $L$ is a tangent line to $\P$ in the point $P$, then we see that every point of $L$, different from $P$ lies on a secant line and a passant to $\P$. It follows that for a point $P$ on $L$, $P\notin \P'$ implies $P\notin \P'$. Since $|\P\cap L|=|\P'\cap L|$ and $\P$ and $\P'$ coincide outside $L$, we have that $\P=\P'$.
Now let $L$ be a secant line to $\P$, let $P_1,P_2$ be the intersection points of $L$ with $\P$ and let $P_3,P_4$ be the points of $\P$, not on $L$. The point $Q=P_3P_4\cap L$ does not lie in $\P$, lies on a $2$-secant and a passant to $\P$. It follows that $Q\notin \P'$. The point $R$ on $L$, different from $P_1,P_2$ and $Q$ is the unique external point to $\P$ on $L$ and lies on two tangent lines and one passant. It follows that if $\P\neq \P'$, then the point $R$ was added and one of $P_1,P_2$ was removed; note that the set $\{P_3,P_4,P_i,R\}$, $i=1,2$ is indeed a set of $4$ points, no three of which lie on a line, i.e. an oval.
\end{proof}

\section{Appendix B}\label{AppendixB}

We conclude by discussing switching in $\Q(2n,2)$ and $\Q(2n,3)$.

\begin{proposition}\label{Q2n2}
There exists a quasi-quadric, which is not a quadric, obtained by switching in non-singular hyperplane of the parabolic quadric $\Q(2n,2)$, $n\geq 2$.
\end{proposition}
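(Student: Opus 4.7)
The plan is to exhibit an explicit switching. Take $\P = \Q(2n,2)$ in $\PG(2n,2)$ with nucleus $N$ and fix a non-singular hyperplane $\pi$, so $N \notin \pi$ and $R := \pi \cap \P$ is a non-degenerate $\Q^{\pm}(2n-1,2)$ in $\pi = \PG(2n-1,2)$. The aim is to produce a second quadric $R' \subset \pi$ of the same type as $R$ such that the switched set $\P' := (\P \setminus R) \cup R'$ is a parabolic quasi-quadric which is not a quadric. What makes everything run for $q = 2$ is the arithmetic progression
\[
A := |\Q^-(2n-1,2)| < B := |P\Q(2n-2,2)| < C := |\Q^+(2n-1,2)|
\]
with common difference $C - B = B - A = 2^{n-1}$.

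To check that $\P'$ is a quasi-quadric, I would fix a hyperplane $H \neq \pi$, set $\mu = H \cap \pi$, and write $|H \cap \P'| = |H \cap \P| + (|\mu \cap R'| - |\mu \cap R|)$. Since both $R$ and $R'$ are $\Q^\pm(2n-1,2)$ in $\pi$, each of $|\mu \cap R|$ and $|\mu \cap R'|$ equals $|\Q(2n-2,2)|$ or $|P\Q^\pm(2n-3,2)|$, so the difference lies in $\{0, \pm 2^{n-1}\}$. Through $\mu$ there are exactly three hyperplanes of $\PG(2n,2)$: $\pi$, one singular hyperplane $H_s$ (the one containing $N$), and one other non-singular hyperplane $H_n$. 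Double counting the points of $\P$ across these three hyperplanes gives
\[
|H_n \cap \P| = |\P| - |R| - B + 2|\mu \cap R|,
\]
so $|H_n \cap \P| = A$ when $|\mu \cap R|$ is the smaller of the two values and $|H_n \cap \P| = C$ when it is the larger. This correlation makes the signs work: if the difference equals $+2^{n-1}$ then $|H_n \cap \P| = A$, giving $|H_n \cap \P'| = B$, and dually for $-2^{n-1}$; for $H_s$, the value $B$ plus any element of $\{-2^{n-1}, 0, 2^{n-1}\}$ automatically lies in $\{A, B, C\}$. Hence $\P'$ is a parabolic quasi-quadric for every such $R'$.

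To force $\P'$ not to be a quadric, I would argue by contradiction. If $\P'$ were a quadric, then $|\P'| = |\Q(2n,2)|$ and the intersection sizes rule out the singular case, so $\P' = \Q' = \{q' = 0\}$ is a non-singular parabolic quadric. Writing $\P = \{q = 0\}$ and $\pi = \{\ell = 0\}$, the symmetric difference $\P \triangle \P' = R \triangle R'$ lies in $\pi$, so the homogeneous quadratic $\delta := q + q'$ vanishes on every vector of $\mathbb{F}_2^{2n+1}$ with $\ell = 1$. Setting $\ell = 1$ and reducing $\delta$ modulo the field relations $x_i^2 = x_i$ forces, by direct coefficient comparison, the factorisation $\delta = L(L + \ell)$ for some linear form $L$; on $\pi$ this restricts to $\delta|_\pi = L|_\pi$, so $R \triangle R' = \{L|_\pi = 1\}$ is an affine hyperplane of $\pi$. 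The quadric-producing choices of $R'$ therefore number at most $2^{2n}$, whereas the total number of $\Q^{\pm}(2n-1,2)$ of the correct type in $\pi$ far exceeds $2^{2n}$ for $n \geq 2$. Picking $R'$ of the same type as $R$ with $R \triangle R'$ not an affine hyperplane of $\pi$ then completes the construction.

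The hardest step is the algebraic factorisation $\delta = L(L + \ell)$, which requires a careful coefficient argument in characteristic $2$: the identity $L^2 = L$ as a function on $\mathbb{F}_2$-points is what allows the diagonal squares of $\delta$ to be absorbed into cross-terms with $\ell$. The remainder of the argument is comparatively mild, driven by the arithmetic $C - B = B - A = 2^{n-1}$ and a generous count of quadrics of type $\Q^{\pm}(2n-1,2)$ in $\pi$.
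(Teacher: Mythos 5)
Your verification that $\P'$ remains a quasi-quadric is essentially the paper's own argument in different clothing: both proofs rest on the fact that a hyperplane $\mu$ of $\pi$ lies on exactly one singular hyperplane of $\Q(2n,2)$ (the one through the nucleus $N$), which forces the correlation between $|\mu\cap R|$ and the type of the third hyperplane $H_n$, after which the arithmetic $C-B=B-A=2^{n-1}$ closes every case. Where you genuinely diverge is in forcing $\P'$ to be a non-quadric. The paper takes the replacement set to be a \emph{non-quadric} quasi-quadric of the right type in $\pi$ — these exist by \cite{QQ} only when $2n-1\geq 5$ — so that $\P'$ fails to be a quadric for the trivial reason that one hyperplane section is wrong; the remaining case $n=2$, where every elliptic or hyperbolic quasi-quadric of $\PG(3,2)$ is a quadric, is settled by a computer search. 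You instead keep $R'$ a genuine quadric and argue algebraically: if $\P'=\{q'=0\}$ then $\delta=q+q'$ vanishes on the affine chart $\{\ell=1\}$, and your coefficient computation is correct (reducing modulo $x_i^2=x_i$ kills the bilinear part, matches the diagonal of $\delta_0$ against the cross-term $m$, and the Frobenius identity $L^2=\sum b_{ii}x_i^2$ gives $\delta=L(L+\ell)$), whence $R\triangle R'$ is an affine hyperplane of $\pi$ or empty. Since at most $2^{2n}$ choices of $R'$ arise this way while the number of quadrics of the given type in $\PG(2n-1,2)$ is larger ($280$ resp.\ $168$ versus $16$ already for $n=2$, and in general the index of the orthogonal group in $\mathrm{GL}_{2n}(2)$ grows far faster than $2^{2n}$), a good $R'$ exists. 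This buys a uniform, computer-free treatment of all $n\geq2$ that does not import non-classical quasi-quadrics from lower dimension; note that your detour through non-singularity of $q'$ is unnecessary, since the factorisation only needs $\P'$ to be the zero set of \emph{some} quadratic form (indeed, singular quadrics in $\PG(2n,2)$ can also have $2^{2n}-1$ points, so "the intersection sizes rule out the singular case" would itself need an argument if it were load-bearing). The two assertions you leave unproved — the quadric count exceeding $2^{2n}$, and the exclusion of singular quadrics — are respectively routine and redundant, so I regard them as presentational rather than substantive gaps.
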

\begin{proof} Let $\P=\Q(2n,2)$, let $N$ be its nucleus and let $\pi$ be a non-singular hyperplane meeting $\Q(2n,2)$ in $\Q^\pm(2n-1,2)$. Since $\pi$ is non-singular, $N\notin \pi$. Let $\P'$ be the set $\P\setminus (\P\cap\pi)\cup \Q'$ where $\Q'$ is a quasi-quadric in $\pi=\PG(2n-1,2)$ of the same type as $\Q^\pm(2n-1,2)$. 
We will show that $\P'$ is a parabolic quasi-quadric. Consider a hyperplane $H$ of $\PG(2n,2)$. If $H=\pi$ then $|H\cap \P|=|H\cap \P'|=|\Q^\pm(2n-1,2)|$ and we are done. If $H\neq \pi$ then $H$ meets $\pi$ in a hyperplane $\mu$ of $\pi$. Then $\mu$ lies on exactly one singular hyperplane (the hyperplane through $\mu$ and $N$). This shows that the other, non-singular, hyperplane must be of the same type as $\P\cap\pi$ if $\mu\cap\P=P\Q^\pm(2n-3,2)$ and of the opposite type of $\mu\cap \P=\Q(2n-2,2)$.

If $\mu\cap \P=\Q(2n-2,2)$, then the singular hyperplane contains $|P\Q(2n-2,2)|-|\Q(2n-2,2)|=2^{2n-2}$ points of $\P$ outside $\pi$ and the non-singular hyperplane contains the other $|\P-(\P\cap \pi)|-2^{2n-2}=|\Q(2n,2)|-|\Q^\pm(2n-1,2)|-2^{2n-2}=2^{2n-2}\mp2^{n-1}$ points outside $\pi$.
If $|\mu\cap \pi|=|P\Q^\pm(2n-3,2)|$, then the singular hyperplane contains  $|P\Q(2n-2,2)|-|P\Q^\pm(2n-3,2)|=2^{2n-2}\mp2^{n-1}$ points outside $\pi$ and the non-singular hyperplane contains the other $|\P-(\P\cap \pi)|-(2^{2n-2}\mp 2^{n-1})=|\Q(2n,2)|-|\Q^\pm(2n-1,2)|\mp (2^{2n-2}-2^{n-1})=2^{2n-2}$ points. Hence, all hyperplanes, different from $\pi$ have $2^{2n-2}\mp2^{n-1}$ or $2^{2n-2}$ points of $\P$ outside $\pi$.
Since $\Q'$ is a quasi-quadric, every hyperplane of $\pi$ meets $\P'\cap\pi=\Q'$ in $|\Q(2n-2,2)|$ or $|P\Q^\pm(2n-3,2)|$ points.
Note that $2^{2n-2}\mp2^{n-1}+|\Q(2n-2,2)|=|\Q^\mp(2n-1,2)|$, $2^{2n-2}+|\Q(2n-2,2)|=|P\Q(2n-2,2)|$, $2^{2n-2}\mp 2^{n-1}+|P\Q^\pm(2n-3,2)|=|P\Q(2n-2,2)|$ and $2^{2n-2}+|P\Q^\pm(2n-3,2)|=|\Q^\pm(2n-1,2)|$.
It follows that every hyperplane meets $\P'$ in $|\Q^-(2n-1,2)|,|P\Q(2n-2,2)|$ or $\Q^+(2n-1,2)|$ points.
Finally, we need to show that it is possible to pick $\Q'$ such that $\P'$ is not a parabolic quadric. For $n\geq 2$, let $\Q'$ be a quasi-quadric in $\pi$, which is not a quadric (see Remark \ref{remark36}), then $\P'$ is not a quadric since a parabolic quadric cannot contain a subspace $\pi$ with $|\Q^\pm(2n-1,2)|$ points that do not form a $\Q^\pm(2n-1,2)$. For $\Q(4,2)$, we claim that there is always a quadric $\Q'$ such that $\P'$ does not have a nucleus. This can be readily checked by the use of a computer, e.g. using the package FinInG \cite{fining}: of the 280 choices for a hyperbolic quadric $\Q^+(3,2)$, 270 give rise to a quasi-quadric without nucleus, of the 168 choices for an elliptic quadric, 162 give rise to a quasi-quadric without nucleus. \end{proof}

\begin{proposition}\label{Q2n3}
There exists a quasi-quadric, which is not a quadric, obtained by switching in a non-singular hyperplane of the parabolic quadric $\Q(2n,3)$, $n\geq 2$.
\end{proposition}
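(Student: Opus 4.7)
The plan is to adapt the construction of Proposition \ref{Q2n2} to $q=3$. Take $\P=\Q(2n,3)$ with $n\geq 2$ and fix a non-singular hyperplane $\pi$, so $\P\cap\pi=\Q^\pm(2n-1,3)$. I would choose a quasi-quadric $\Q'\subset\pi$ of the same type as $\P\cap\pi$ with $\Q'\neq\Q^\pm(2n-1,3)$, and form $\P'=(\P\setminus(\P\cap\pi))\cup\Q'$. The task then is to verify the intersection-number conditions for $\P'$; since $\Q'$ has the cardinality of $\Q^\pm(2n-1,3)$ but differs from it as a point set, $\P'\cap\pi=\Q'$ immediately forces $\P'$ to be a non-quadric once the quasi-quadric conditions are established.

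For every hyperplane $H\neq\pi$ of $\PG(2n,3)$, set $\mu=H\cap\pi$. One has $|H\cap\P'|=|H\cap\P|+\delta_\mu$ with $\delta_\mu=|\mu\cap\Q'|-|\mu\cap\P|$. As $\Q'$ is a quasi-quadric of the same type as $\P\cap\pi$, $\delta_\mu\in\{0,\pm 3^{n-1}\}$, and the three valid intersection numbers for $\P'$ form the arithmetic progression $|P\Q(2n-2,3)|-3^{n-1}$, $|P\Q(2n-2,3)|$, $|P\Q(2n-2,3)|+3^{n-1}$. I would use the orthogonal polarity $\perp$ of $\P$ (available since $q=3$ is odd) to classify the codim-two spaces $\mu\subset\pi$ by the pair $(\mu\cap\P,\,|\mu^\perp\cap\P|)$, and tabulate, by double-counting, the $q+1=4$ hyperplanes through $\mu$ into their (hyperbolic, elliptic, singular) type distribution. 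A case-by-case check then shows that every admissible $\delta_\mu$ keeps $|H\cap\P|+\delta_\mu$ valid for every $H$ through $\mu$, except in the one configuration where $\mu$ is non-degenerate and $\mu^\perp$ is a passant line to $\P$: in that case the four hyperplanes through $\mu$ split as two hyperbolic and two elliptic (none singular), which forces $\delta_\mu=0$.

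The main obstacle is therefore choosing $\Q'\neq\Q^\pm(2n-1,3)$ so that $|\mu\cap\Q'|=|\mu\cap\P\cap\pi|$ for every such \emph{passant} $\mu$. I would realise $\Q'$ as a pivot of $\Q^\pm(2n-1,3)$ inside $\pi$: choose a singular hyperplane $\sigma$ of $\Q^\pm(2n-1,3)$ with vertex $P$, a hyperplane $\tau\subset\sigma$ with $P\notin\tau$, and a quasi-quadric $\tilde\Q$ in $\tau$ of type $\Q^\pm(2n-3,3)$ with $\tilde\Q\neq\Q^\pm(2n-3,3)$; then $\Q'=(\Q^\pm(2n-1,3)\setminus P\Q^\pm(2n-3,3))\cup P\tilde\Q$. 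For this $\Q'$, $\delta_\mu\neq 0$ only for $\mu$ containing $P$, and the deviation is controlled by $|(\mu\cap\tau)\cap\tilde\Q|-|(\mu\cap\tau)\cap\Q^\pm(2n-3,3)|$. Matching $\tilde\Q$ to $\Q^\pm(2n-3,3)$ on exactly the hyperplane sections of $\tau$ that correspond under the projection from $P$ to the passant $\mu$'s yields the desired $\Q'$; for $n=2$ this is a finite verification inside $\PG(4,3)$ along the lines of the computer check at the end of the proof of Proposition \ref{Q2n2}, and for $n\geq 3$ an inductive choice of $\tilde\Q$ (taking a lower-dimensional non-quadric quasi-quadric supplied by the induction) completes the construction.
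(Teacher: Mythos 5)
Your reduction is sound and matches the real difficulty: for $q=3$ the only obstruction to the na\"ive imitation of Proposition \ref{Q2n2} is indeed the family of codimension-$2$ spaces $\mu\subset\pi$ with $\mu^\perp$ a passant, where the presence of both an elliptic and a hyperbolic hyperplane through $\mu$ (other than $\pi$) forces $\delta_\mu=0$, while the tangent and secant cases are automatically compatible with any quasi-quadric $\Q'$ of the same type. The gap is that you never establish the existence of a $\Q'\neq\P\cap\pi$ meeting this passant condition, and that existence is the entire content of the proposition. Your proposed ansatz --- realise $\Q'$ as a pivot of $\P\cap\pi$ inside $\pi$ with a base $\tilde\Q$ ``matched'' to $\Q^\pm(2n-3,3)$ on the constrained hyperplane sections of $\tau$ --- is only a reformulation of the problem: the constrained sections are exactly those hyperplanes $\rho$ of $\tau$ for which some plane through the tangent line $\langle\pi^\perp,P\rangle$ determined by $\rho$ contains a passant through $\pi^\perp$, and you give no count or construction showing that the unconstrained sections leave enough freedom for a $\tilde\Q\neq\Q^\pm(2n-3,3)$ to exist (by Lemma \ref{lem:equal-pointsets}, if every section were constrained you would be forced back to $\tilde\Q=\Q^\pm(2n-3,3)$). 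Deferring $n=2$ to an unperformed ``finite verification'' and $n\geq 3$ to an unspecified induction does not close this. A secondary issue: $\Q'\neq\P\cap\pi$ does not by itself make $\P'$ a non-quadric; you would still need $\Q'$ to be a non-quadric, or an argument such as exhibiting a line with $3$ points of $\P'$.

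For comparison, the paper sidesteps the existence problem entirely by writing down the replacement set explicitly: it keeps the cone $P\tilde{\Q}$ in a singular hyperplane $\pi$ of the section $\P\cap\xi$ and replaces the points of $\P\cap\xi$ \emph{outside} $\pi$ by the internal points of $\P$ lying in $\xi\setminus\pi$. The passant condition is then verified by directly counting internal points in the relevant codimension-$2$ spaces (using that internal points correspond to elliptic hyperplanes), and non-quadricity follows from a $2$-secant of $\P$ meeting $\xi$ in a new internal point, giving a line with $3$ points of $\P'$. Note that this replacement set is not a pivot of $\P\cap\xi$ inside $\xi$ (it agrees with $\P\cap\xi$ \emph{on} the singular hyperplane and differs off it), so your specific search space may not even contain a solution; if you want to complete your approach you should either prove existence within your pivot family by an explicit count of constrained versus unconstrained sections, or switch to an explicit candidate such as the internal-point set.
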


\begin{proof}
Let $\xi$ be a hyperplane intersecting $\P=\Q(2n,3)$ in a non-singular quadric $\Q^\pm(2n-1,3)$  and let $\pi$ be a singular hyperplane of $\xi$ to $\Q^\pm(2n-1,3)$. Then $\pi\cap \P$ is a cone $P\tilde{\Q}$ with vertex a point $P$ and base a non-singular quadric $\Q^\pm(2n-3,3)$. Let $\P'$ be the set of points of $(\P\setminus (\P\cap \xi)) \cup \S$, where $\S$ is the set of internal points to $\P$ in $\xi$, not contained in $\pi$, together with the points of $P\tilde{\Q}$. In other words, $\P'$ is obtained by taking the points of $\P$ and replacing the points of $\P$ in $\xi\setminus\pi$ by the internal points to $\P$ contained in $\xi\setminus \pi$.

The number of internal points to $\P$ in $\PG(2n,q)$, $q$ odd, is the number of elliptic hyperplanes, which is $\frac{1}{2}q^n(q^n-1)$, and the number of internal points in $\xi$ is the number of hyperplanes of the type of $\xi$ through an internal point, which is $\frac{1}{2}q^{n-1}(q^n\mp1)$ (see e.g. \cite[Sections 1.5--1.7]{GGG}).

We also see that the number of internal points in a hyperbolic hyperplane is the number of elliptic hyperplanes through an external point, which is $\frac{1}{2}q^{n-1}(q^n-1)$. 

We now determine the number of internal points in codimension $2$-spaces.
\begin{itemize}
\item A codimension $2$-space $\mu$ for which $\mu^\perp$ is a passant lies on $2$ elliptic and $2$ hyperbolic hyperplanes, from which it follows that $\mu$ has $\frac{1}{2}3^{n-1}(3^n+1)-3^{2n-2}$ internal points. 
\item A codimension $2$-space $\mu$ for which $\mu^\perp$ is a $2$-secant lies on $1$ elliptic, $1$ hyperbolic and $2$ singular hyperplanes, from which is follows that $\mu$ has $\frac{1}{2}3^{n-1}(3^n-1)-3^{2n-2}$ internal points. 
\item Finally, a codimension $2$-space $\mu$ for which $\mu^\perp$ is a tangent line either meets $\P$ in a cone over an elliptic quadric, in which case $\mu$ lies on one cone and $3$ elliptic hyperplanes, which implies that $\mu$ has $\frac{1}{2}3^{n-1}(3^n+1)-3^{2n-2}$ internal points, or meets $\P$ in a cone over a hyperbolic quadric, in which case $\mu$ lies on one cone and $3$ hyperbolic hyperplanes, which implies that $\mu$ has $\frac{1}{2}3^{n-1}(3^n-1)-3^{2n-2}$ internal points.
\end{itemize}

In particular, it follows that there are $\frac{1}{2}3^{n-1}(3^n\mp1)-(\frac{1}{2}3^{n-1}(3^n\mp1)-3^{2n-2})=3^{2n-2}$  internal points in $\xi$, not in $\pi$. 

This shows that $|\S|=|P\Q^\pm(2n-3,3)|+3^{2n-2}=|\P\cap \xi|=|\Q^\pm(2n-1,3)|$.

Every hyperplane of $\PG(2n,3)$, different from $\xi$, intersects $\xi$ in a hyperplane of $\xi$, which either meets $\P$ in $|P\Q^\pm(2n-3,3)|$ or $|\Q(2n-2,3)|$ points. 

A hyperplane $\mu$ of $\xi$ with $|P\Q^\pm(2n-3,3)|$  points of $\P$ is such that $\mu^\perp$ is a tangent line to $\P$. Then $\mu$ lies only in hyperplanes meeting $\P$ in $|P\Q(2n-2,3)|$ or $|\Q^\pm(2n-1,3)|$ points, and as such, we need to show that $\mu$ has either $|P\Q^\pm(2n-3,3)|$  or $|P\Q^\pm(2n-3,3)|\mp 3^{n-1}=|\Q(2n-2,3)|$  points of $\P'$ in order for these hyperplanes to have $|\Q^-(2n-1,3)|$, $|P\Q(2n-2,3)|$ or $|\Q^+(2n-1,3)|$ points of $\P'$.

A hyperplane $\mu$ of $\xi$ with $|\Q(2n-2,3)|$ points of $\P$ for which $\mu^\perp$ is a $2$-secant to $\P$ lies on $\xi$, on $2$ hyperplanes meeting $\P$ in $|P\Q(2n-2,3)|$ and one hyperplane meeting in  $|\Q^\mp(2n-1,3)|$ points. Hence, we need to show that $\mu$ has either $|\Q(2n-2,3)|$ or $|\Q(2n-2,3)|\pm 3^{n-1}=|P\Q^\pm(2n-3,3)|$ points of $\P'$ in order for these hyperplanes to have $|\Q^-(2n-1,3)|$, $|P\Q(2n-2,3)|$ or $|\Q^+(2n-1,3)|$ points.

Finally, a hyperplane $\mu$ of $\xi$ with $|\Q(2n-2,3)|$ points of $\P$ for which $\mu^\perp$ is a passant to $\P$ lies on $\xi$, on $2$ hyperplanes meeting $\P$ in $|\Q^\mp(2n-1,3)|$ points and on one hyperplane, different from $\xi$ meeting $\P$ in $|\Q^\pm(2n-1,3)|$ points. Hence, we need to show that $\mu$ has exacty $|\Q(2n-2,3)|$ points of $\P'$ in order for these hyperplanes to have $|\Q^-(2n-1,3)|$ of $|\Q^+(2n-1,3)|$ points of $\P'$.

Now let $\nu$ be a hyperplane. If $\nu=\pi$ then we have argued above that $|\nu\cap \P|=|\nu\cap \P'|$ and we are done. So let $\nu\neq \pi$ and  let $L$ be the intersection of $\nu$ with $\pi$, which is a hyperplane of $\xi$.

There are $3$ possibilities for $L\cap \P$ if $\xi$ is an elliptic hyperplane.
\begin{itemize}
\item $L\cap \P$ is an elliptic quadric $\Q^-(2n-3,3)$. There are $4$ hyperplanes of $\xi$ through $L$, one of which is $\pi$. Now $L^\perp \cap \P$ is a conic, $\pi^\perp$ a tangent line to $\P$, contained in $L^\perp$, and $\xi^\perp$ is an internal point in $L^\perp$ on $\pi^\perp$. Hence, through $\xi^\perp$, there are two tangent lines to $L^\perp\cap \P$ in $L^\perp$, one secant line, and one external line.
It follows that there are $3$ hyperplanes of $\xi$ through $L$ with $\frac{1}{2}3^{n-1}(3^n+1)-3^{2n-2}$ internal points and one hyperplane (corresponding to the unique $2$-secant to $\P$ through $\xi^\perp$ in $L^\perp$) with $\frac{1}{2}3^{n-1}(3^n-1)-3^{2n-2}$ points. It follows that $L$ contains $\frac{1}{2}3^{n-2}(3^{n-1}+1)$   internal points.

Hence, if $\nu^\perp$ is a tangent or external line to $\P$, we find $|\nu^\perp\cap \P' |=\frac{1}{2}3^{n-1}(3^n+1)-3^{2n-2}-\frac{1}{2}3^{n-2}(3^{n-1}+1)+|\Q^-(2n-3,3)|=|\Q(2n-2,3)|$ points as needed.
And if $\nu^\perp$ is a $2$-secant line to $\P$, we find $|\nu^\perp\cap \P' |=\frac{1}{2}3^{n-1}(3^n-1)-3^{2n-2}-\frac{1}{2}3^{n-2}(3^{n-1}+1)+|\Q^-(2n-3,3)|=|P\Q^-(2n-3,3)|$ as needed.

\item $L\cap \P$ is a cone $M\Q^-(2n-5,3)$, where $M$ is a line through $P$. There are $4$ hyperplanes of $\xi$ through $L$, one of which is $\mu$. Now $L^\perp \cap \P$ is a line in the plane $L^\perp$, and $\xi^\perp$ is a point in $L^\perp$, not in $\P$. Hence, through $\xi^\perp$, there are four tangent lines to $L^\perp\cap \P$.
It follows that all $4$ hyperplanes of $\xi$ through $L$ have $\frac{1}{2}3^{n-1}(3^n+1)-3^{2n-2}$ internal points and it then follows that $L$ contains $\frac{1}{2}3^{n-1}(3^n+1)-3^{2n-2}-3^{2n-3}$ internal points. Hence, any hyperplane $\nu$ of $\xi$ through $L$ has $\nu^\perp$ a tangent line to $\P$, and we find $|\nu^\perp\cap \P' |=\frac{1}{2}3^{n-1}(3^n+1)-3^{2n-2}-(\frac{1}{2}3^{n-1}(3^n+1)-3^{2n-2}-3^{2n-3})+|M\Q^-(2n-5,3)|=|P\Q^-(2n-3,3)|$ as needed.

\item $L\cap \P$ is a cone $P\Q(2n-4,3)$. There are $4$ hyperplanes of $\xi$ through $L$, one of which is $\mu$. Now $L^\perp \cap \P$ are two lines, and $\xi^\perp$ is an internal point in $L^\perp$. Through $\xi^\perp$, there is one tangent line to $L^\perp\cap \P$, and $3$ $2$-secants.
It follows that the $3$ hyperplanes of $\xi$, different from $\mu$, through $L$ have $\frac{1}{2}3^{n-1}(3^n-1)-3^{2n-2}$ internal points and $\mu$ has $\frac{1}{2}3^{n-1}(3^n+1)-3^{2n-2}$ points. It follows that $L$ contains $\frac{1}{2}3^{n-1}(3^{n-1}-1)-3^{2n-3}$ internal points. Hence, for any hyperplane $\nu\neq \pi$ in $\xi$ through $L$, $\nu^\perp$ is a $2$-secant line to $\P$. Hence, we find $|\nu^\perp\cap \P' |=\frac{1}{2}3^{n-1}(3^n-1)-3^{2n-2}-\frac{1}{2}3^{n-1}(3^{n-1}-1)-3^{2n-3}+|P\Q(2n-4,3)|=|\Q(2n-2,3)|$ as needed.
\end{itemize}

There are $3$ possibilities for $L\cap \P$ if $\xi$ is a hyperbolic hyperplane:
\begin{itemize}
\item $L\cap \P$ is a hyperbolic quadric $|\Q^+(2n-3,3)|$. There are $4$ hyperplanes of $\xi$ through $L$, one of which is $\pi$. Now $L^\perp \cap \P$ is a conic, $\pi^\perp$ a tangent line to $\P$, contained in $L^\perp$, and $\xi^\perp$ is an external point in $L^\perp$ on $\pi^\perp$. Hence, through $\xi^\perp$, there are two tangent lines to $L^\perp\cap \P$ in $L^\perp$, one secant line, and one external line.

It follows that there are $3$ hyperplanes of $\xi$ through $L$ with $\frac{1}{2}3^{n-1}(3^n-1)-3^{2n-2}$ internal points and one hyperplane (corresponding to the unique passant to $\P$ through $\xi^\perp$ in $L^\perp$) with $\frac{1}{2}3^{n-1}(3^n+1)-3^{2n-2}$. It follows that $L$ contains $\frac{1}{2}3^{n-2}(3^{n-1}-1)$ internal points.

Hence, if $\nu^\perp$ is a tangent or secant line to $\P$, we find $|\nu^\perp\cap \P' |=\frac{1}{2}3^{n-1}(3^n-1)-3^{2n-2}-\frac{1}{2}3^{n-2}(3^{n-1}-1)+|\Q^+(2n-3,3)|=|\Q(2n-2,3)|$  points as needed.

And if $\nu^\perp$ is a passant to $\P$, we find $|\nu^\perp\cap \P' |=\frac{1}{2}3^{n-1}(3^n+1)-3^{2n-2}-\frac{1}{2}3^{n-2}(3^{n-1}-1)+|\Q^+(2n-3,3)|=|P\Q^+(2n-3,3)|$ as needed.

\item $L\cap \P$ is a cone $M\Q^+(2n-5,3)$ where $M$ is a line through $P$. There are $4$ hyperplanes of $\xi$ through $L$, one of which is $\mu$. Now $L^\perp \cap \P$ is a line in the plane $L^\perp$, and $\xi^\perp$ is a point in $L^\perp$, not in $\P$. Hence, through $\xi^\perp$, there are four tangent lines to $L^\perp\cap \P$ in $L^\perp$.
It follows that all $4$ hyperplanes of $\xi$ through $L$ have $\frac{1}{2}3^{n-1}(3^n-1)-3^{2n-2}$ internal points and it then follows that $L$ contains $\frac{1}{2}3^{n-1}(3^{n-2}-1)$ internal points.

Hence, any hyperplane $\nu$ of $\xi$ through $L$ has $\nu^\perp$ a tangent line to $\P$, and we find $|\nu^\perp\cap \P' |=\frac{1}{2}3^{n-1}(3^n-1)-3^{2n-2}-\frac{1}{2}3^{n-1}(3^{n-2}-1)
+|M\Q^+(2n-5,3)|=|P\Q^+(2n-3,3)|$ as needed.

\item $L\cap \P$ is a cone $P\Q(2n-4,3)$. There are $4$ hyperplanes of $\xi$ through $L$, one of which is $\pi$. Now $L^\perp \cap \P$ are two lines, and $\xi^\perp$ is a point in $L^\perp$, not in $\P$. Through $\xi^\perp$, there is one tangent line to $L^\perp\cap \P$, corresponding to $\pi$ and $3$ $2$-secants.
It follows that all hyperplanes of $\xi$ through $L$ have $\frac{1}{2}3^{n-1}(3^n-1)-3^{2n-2}$ internal points, from which it follows that $L$ contains $\frac{1}{2}3^{n-1}(3^{n-2}-1)$ internal points.

Hence, for any hyperplane $\nu\neq \pi$ in $\xi$ through $L$, $\nu^\perp$ is a $2$-secant line to $\P$. Hence, we find $|\nu^\perp\cap \P' |=\frac{1}{2}3^{n-1}(3^n-1)-3^{2n-2}-\frac{1}{2}3^{n-1}(3^{n-2}-1)+|P\Q(2n-4,3)|=|\Q(2n-2,3)|$ as needed.

It remains to show that $\P'$ is not a quadric. Consider a line $M$ which is a $2$-secant to $\P$ and intersects $\xi$ in a point of $\P'\setminus \P$. Then $M$ contains $3$ points of $\P'$. Since lines intersect quadrics in $0,1,2$ or $4$ points we conclude that  $\P'$ is not a quadric.
\end{itemize}
\end{proof}

{\bf Proof of Proposition \ref{prop:switching-is-pivoting}}:

\begin{proof} 
There are three intersection numbers of hyperplanes with respect to $\P$. Let $2n=m$, $A_{m-1}=|\Q^-(m-1,q)|$, $B_{m-1}=|P\Q(m-2,q)|$, $C_{m-1}:=|\Q^+(2n-1,q)$. Now let $H$ be a hyperplane of $\PG(m,q)$. We need to show that $H\cap \P'\in \{A_{m-1},B_{m-1},C_{m-1}\}$. Recall that $\P\cap (H\setminus \pi)=\P'\cap (H\setminus \pi)$. 

If $H$ does not contain $P$, then $H\cap \pi$ has the same number of points of both $\P$ and $\P'$, so $|H\cap \P'|=|H\cap \P|\in \{A_{m-1},B_{m-1},C_{m-1}\}$.
So suppose that $H$ contains $P$, then, since every hyperplane of $\mu$ meets the base $\tilde{\P}$ in either $A_{m-3}$, $B_{m-3}$ or $C_{m-3}$ points, $H\cap \pi$ meets in $\P$ in $qA_{m-3}+1$, $qB_{m-3}+1$, or $qC_{m-3}+1$ points. Likewise, $H\cap \pi$ meets $\P'$ in $qA_{m-3}+1$, $qB_{m-3}+1$, or $qC_{m-3}+1$ points.
If $|H\cap \pi\cap\P|=qA_{m-3}+1$, then an easy count shows that all hyperplanes through $\nu$, different from $\pi$, have $A_{m-1}$ points of $\P$.
We find that $|H\cap \P'|$ is either $A_{m-1}$, $A_{m-1}-(qA_{m-3}+1)+(qB_{m-3}+1)$, or  $A_{m-1}-(qA_{m-3}+1)+(qC_{m-3}+1)$. Since $qB_{m-3}-qA_{m-3}=B_{m-1}-A_{m-1}$ and $qC_{m-3}-qA_{m-3}=C_{m-1}-A_{m-1}$ we indeed have that $|H\cap \P'|\in \{A_{m-1},B_{m-1},C_{m-1}\}$. 
Similarly, if $|H\cap \pi\cap\P|=qC_{m-3}+1$, then an easy count shows that all hyperplanes through $\nu$, different from $\pi$, have $C_{m-1}$ points of $\P$. We find that $|H\cap \P'|$ is either $C_{m-1}$, $C_{m-1}-(qC_{m-3}+1)+(qB_{m-3}+1)$, or  $C_{m-1}-(qC_{m-3}+1)+(qA_{m-3}+1)$. Since $qB_{m-3}-qC_{m-3}=B_{m-1}-C_{m-1}$ and $qA_{m-3}-qC_{m-3}=A_{m-1}-C_{m-1}$ we indeed have that $|H\cap \P'|\in \{A_{m-1},B_{m-1},C_{m-1}\}$. 

Finally, suppose that $|H\cap \pi\cap\P|=qB_{m-3}+1$. Note that, unlike in the case of $\Q(2n,q)$, it does not immediately follow that all hyperplanes through $H\cap \pi$ have $B_{m-1}$ points. But we will show that this property still holds.

From the above reasoning we deduce that every singular hyperplane (i.e. meeting $\P$ in $B_{m-1}$ points), different from $\pi$, needs to intersect $\pi\cap \P$ in exactly $qB_{m-3}+1$ points. 
Furthermore, since we assume that $|\P\cap\pi|=q|\Q(2n-2,q|+1$ and that every hyperplane of $\pi$ meets the cone $\P\cap \pi$ in $qA_{m-3}+1,qB_{m-3}+1=|\Q(2n-2,q)|$, or $qC_{m-1}+1$ points of $\P$. It follows that the number of hyperplanes of each of these three types meeting $\P\cap\pi$ is a constant, independent of the choice of $\P$. Hence, the number of hyperplanes of each type is the same as the number of hyperplanes intersecting a cone with base a $\Q(2n-2,q)$. By Lemma \ref{lem:classical2}, we know that in that case, all hyperplanes through a hyperplane of $\pi$ with $qB_{m-3}+1$ points are singular, while those different from $\pi$ through a hyperplane with $qA_{m-3}+1$ have $A_{m-1}$ points and those through a hyperplane with $qC_{m-3}+1$ have $C_{m-1}$ points. Since we have that all hyperplanes through a hyperplane of $\pi$ with $qA_{m-3}+1$ of $\P$ have $A_{m-1}$ points of $\P$ and those through a hyperplane with $qC_{m-3}+1$ of $\P$ have $C_{m-1}$ points of $\P$, and all hyperplanes with $B_{m-1}$ meet $\pi$ in a hyperplane of $\pi$ with $qB_{m-3}+1$ points, we conclude that all hyperplanes through a hyperplane of $\pi$ with $qB_{m-3}+1$ points of $\P$ have $B_{m-1}$ points.
It then follows, as in the other cases, that $|H\cap \P'|$ is either $B_{m-1}$, $B_{m-1}-(qB_{m-3}+1)+(qA_{m-3}+1)$, or  $B_{m-1}-(qB_{m-3}+1)+(qC_{m-3}+1)$. Since $qA_{m-3}-qB_{m-3}=A_{m-1}-B_{m-1}$ and $qC_{m-3}-qB_{m-3}=C_{m-1}-B_{m-1}$ we indeed have that $|H\cap \P'|\in \{A_{m-1},B_{m-1},C_{m-1}\}$. 
\end{proof}
\end{document}